\newtheorem{theorem}{Theorem}[section]
\newtheorem{lemma}[theorem]{Lemma}
\newtheorem{proposition}[theorem]{Proposition}
\theoremstyle{definition}
\newtheorem{remark}[theorem]{Remark}
\numberwithin{equation}{section}
\newcommand{\dist}{\mathrm{dist}}      
\newcommand{\diam}{\mathrm{diam}}      
\renewcommand{\Im}{{\ensuremath{\mathrm{Im\,}}}} 
\renewcommand{\Re}{{\ensuremath{\mathrm{Re\,}}}} 
\renewcommand{\div}{\mathrm{div}\,}    
\newcommand\restr[2]{{
  \left.\kern-\nulldelimiterspace 
  #1 
  \vphantom{\big|} 
  \right|_{#2} 
  }}
\title[Photo-acoustic inversion using plasmonics]{Photo-acoustic inversion using plasmonic contrast agents:\\ The full Maxwell model}
\author[Ghandriche and Sini]{Ahcene Ghandriche  $^*$ and Mourad Sini$^{\ddag}$}
\thanks{$^*$ RICAM, Austrian Academy of Sciences, Altenbergerstrasse 69, A-4040, Linz, Austria. Email: ahcene.ghandriche@ricam.oeaw.ac.at. This author is supported by the Austrian Science Fund (FWF): P 30756-NBL}
\thanks{$^{\ddag}$ RICAM, Austrian Academy of Sciences, Altenbergerstrasse 69, A-4040, Linz, Austria. Email: mourad.sini@oeaw.ac.at. This author is partially supported by the Austrian Science Fund (FWF): P 30756-NBL}
\begin{document}

\date{\today}

\allowdisplaybreaks

\begin{abstract}
We analyze the inversion of the photo-acoustic imaging modality using electromagnetic plasmonic nano-particles as contrast agents. We prove that the generated pressure, before and after injecting the plasmonic nano-particles, measured at a single point, located away from the targeted inhomogeneity to image, and at a given band of incident frequencies is enough to reconstruct the (eventually complex valued) permittivity. Indeed, from these measurements, we define an indicator function which depends on the used incident frequency and the time of measurement. This indicator function has differentiating behaviors in terms of both time and frequency. First, from the behavior in terms of time, we can estimate the arrival time of the pressure from which we can localize the injected nano-particle. Second, we show that this indicator function has maximum picks at incident frequencies close to the plasmonic resonances. This allows us to estimate these resonances from which we construct the permittivity.
\bigskip

To justify these properties, we derive the dominant electric field created by the injected nano-particle when the incident frequency is close to plasmonic resonances. This is done for the full Maxwell system. To this end, we use a natural spectral decomposition of the vector space $(L^2(D))^3$ based on the spectra of the Newtonian and the Magnetization operators.  As another key argument, we provide the singularity analysis of the Green's tensor of the Maxwell problem with varying permittivity. Such singularity is unusual if compared to the ones of the elliptic case (as the acoustic or elastic models). In addition, we show how the derived approximation of the electric fields propagates, as a source, in the induced pressure with the time. 

\end{abstract}

\subjclass[2010]{35R30, 35C20}
\keywords{photo-acoustic imaging, plasmonic nanoparticles, surface plasmon resonance, inverse problems, Maxwell system.}

\maketitle
\section{Introduction and statement of the results}
\subsection{Introduction}
The photo-acoustic experiment, in the general setting, applies to targets that are electrically conducting, in other words the imaginary part of the 'permittivity' is quite pronounce, and it goes as follows. Exciting the target, with laser, or by sending an incident electric field, will create heat in its surrounding. This heat, in its turn, creates fluctuations, i.e. a pressure field, that propagates along the body to image. This pressure can be collected in an accessible part of the boundary of the target. The photo-acoustic imaging is to trace back the pressure and reconstruct the permittivity that created it.       

In our settings, the source of the heat is given by the injected electromagnetic nano-particles. To describe the mathematical model behind this experiment, let us set $E$, $\bm{T}$ and $p$ to be respectively the electric field, the heat temperature and the acoustic pressure. Then, as described above, the photo-acoustic experiment is based on the following model coupling these three equations:  
\begin{equation*}\label{Photoacoustic-general-model}
\left \{
\begin{array}{llrr}

curl\; curl\; E -\omega^2\; \varepsilon\; \mu\; E=0,~~~ E:=E^s+E^i, \mbox{ in } \mathbb{R}^{3},\\

\rho_0 c_p\dfrac{\partial \bm{T}}{\partial t}-\nabla \cdot \kappa \nabla \bm{T} =\omega\; \Im(\varepsilon)\;\vert E \vert^2\; \delta_{0}(t),\; \mbox{ in } \mathbb{R}^{3}\times \mathbb{R}_+,\\

\dfrac{1}{c^2}\dfrac{\partial^2 p}{\partial t^2}-\Delta p= \rho_0\; \beta_0\; \dfrac{\partial^2 \bm{T}}{\partial t^2}, \mbox{ in } \mathbb{R}^{3}\times \mathbb{R}_+,
\end{array} \right.
\end{equation*}
 where $\rho_0$ is the mass density, $c_p$ the heat capacity, $\kappa$ is the heat conductivity, $c$ is the wave speed and $\beta_0$ the thermal expansion coefficient. 
To the last two equations, we supplement the homogeneous initial conditions $
 \bm{T} =p=\frac{\partial p}{\partial t}=0, \mbox{ at } t=0$ 
and the Silver-M\"{u}ller radiation condition to $E^s$. Under the condition that the heat conductivity is relatively small, the model above reduces to the following one:
\begin{equation}\label{pressurwaveequa}
\left\{
\begin{array}{rll}
    \partial^{2}_{t} p(x,t) - c_{s}^{2}(x) \underset{x}{\Delta} p(x,t) &=& 0 \qquad in \quad \mathbb{R}^{3} \times \mathbb{R}^{+},\\
    p(x,0) &=& \frac{\omega \, \beta_{0}}{c_{p}} \Im(\varepsilon)(x) \, \vert E \vert^{2}(x), \qquad in \quad \mathbb{R}^{3} \\ 
    \partial_{t}p(x,0) &=& 0 \qquad in \quad \mathbb{R}^{3} 
    \end{array}
\right.
\end{equation}
here $c_{s}$ is the velocity of sound in the medium that we assume to be a uniform constant. The constants $\beta_0$ and $c_p$ are known and $\omega$ is an incident frequency. The source $E$ is solution of the scattering problem
\begin{equation}
\label{eq:electromagnetic_scattering}
\left\{
\begin{array}{rll}
curl\; curl\; E -\omega^2\; \varepsilon\; \mu\; E=0,~~~ E:=E^s+E^i, \mbox{ in } \mathbb{R}^{3},\\
E^s(x) \mbox{ satisfies the Silver-M\"{u}ller radiation conditions},
\end{array}
\right.
\end{equation}
where $\varepsilon = \epsilon_p$ inside $D$, $\varepsilon = \epsilon_0(x)$ outside $D$ and $\epsilon_0(x)= \epsilon_{\infty} $ outside a bounded and smooth domain $\Omega$
 ($D \subset \Omega$ being the injected nano-particle with permittivity $\epsilon_p$ and permeability $\mu$). More details on the actual derivation of this model can be found in \cite{P-P-B:2015, Triki-Vauthrin:2017} and more references therein. The permittivity $\epsilon_0$ is variable and it is supposed to be smooth inside $\Omega$. The needed smoothness will be discussed later. 
\bigskip

From now on, we use the notation $u$ instead of $E$, i.e. $u:=E$.
\bigskip

 We have two classes of such nano-particles: dielectric and plasmonic nano-particles. The dielectric nano-particles enjoy the following features. They are highly localized as they are nano-scaled and they have high contrast permittivity.  Under these scales, we can choose the incident frequency so that we excite the dielectric resonances which are related to the eigenvalues of the Newtonian operator. The main feature of the plasmonic nano-particles is that they enjoy negative values of the real part of their permittivity if we choose incident frequencies close to the plasmonic frequencies of the nano-particle. With such negative permittivity, we can excite the plasmonic resonances which are related to the eigenvalues of the Magnetization operator. To describe this, we use the Lorentz model where the permeability $\mu$ is kept constant as the one of the homogeneous background while the permittivity has the form:
\begin{equation}\label{Lorentz-model}
\epsilon_{p} =\epsilon_\infty \left( 1+\frac{\omega^2_p}{\omega^2_0-\omega^2+i \gamma \omega} \right)
\end{equation} 
where $\omega_p$ is the electric plasma frequency, $\omega_0$ is the undamped frequency and $\gamma$ is the electric damping frequency. We observe that if we choose the incident frequency $\omega$ so that $\omega^2$ is larger than $\omega_0^2$, then the real part becomes negative. For such choices of the incident frequency, the nano-particle behaves as a plasmonic nano-particle.


\bigskip

The goal of the photo-acoustic imaging using nano-particles is to recover $\epsilon_{0}(\cdot)$ in $\Omega$ from the measure of the pressure $p(x, t),\; x \in \partial \Omega$ and $t \in (0, T)$ for large enough $T$. The decoupling of the original photo-acoustic mathematical model (\ref{Photoacoustic-general-model}) into (\ref{pressurwaveequa})-(\ref{eq:electromagnetic_scattering}) suggests that we split the inversion into the following two steps.

\begin{enumerate}
\item Acoustic Inversion: Recover the source term $\Im(\varepsilon)(x) \, \vert u \vert^{2}(x)$, $x \in \Omega$, from the measure of the pressure $p(x, t),\; x \in \partial \Omega$ and $t \in (0, T)$.
\bigskip

\item Electromagnetic Inversion: Recover the permittivity $\epsilon(x),\; x\in \Omega$ from  $\Im(\varepsilon)(x) \, \vert u \vert^{2}(x)$, $x \in \Omega$. 
\end{enumerate}
\bigskip

The pressure is collected on the boundary of $\Omega$ in the following situations:
\bigskip

\begin{itemize}

\item Before injecting any particle. The measured data is the pressure $p(x, t),\; x \in \partial \Omega$ and $t \in (0, T)$ without injecting any particle. There is a large literature based on such data. Without being exhaustive, we cite the following references \cite{Habib-book, B-E-K-S:2018, B-G-S:2016, B:2014, B-B-M-T, B-U:2010, C-A-B:2007, FHR, Kirsch-Scherzer, K-K:2010, KuchmentKunyansky, N-S:2014, Natterer, S:2010, S-U:2009} devoted to such inversions. The general approach is that, using the Radon transform, one can recover the initial pressure, i.e. $\Im(\varepsilon)(x) \, \vert E \vert^{2}(x)$, $x \in \Omega$. The next step is to use these internal values to recover the permittivity $\epsilon_0(\cdot)$. 
\bigskip

\item After injecting  nano-particles. The measured data is the pressure $p(x, t),\; x \in \partial \Omega$ and $t \in (0, T)$ after injecting a nano-particle. The first work in this direction is \cite{Triki-Vauthrin:2017} where plasmonic nano-particles  are used and an optimization method was proposed to invert the electric energy fields. There, the $2D$-model is stated and the magnetic field was used. Assuming the initial pressure to be already given, via one of the inversion methods as the Radon transform for instance, the authors propose a reconstruction method to recover the permittivity from the modulus of the electric (or the magnetic) field given in and around the particle $D$. For this, they use the contrasting behavior of the magnetic field across the interface of the particle. 
\bigskip

\item Before and after injecting nano-particles. The measured data is the pressure $p(x, t),\; x \in \partial \Omega$ and $t \in (0, T)$ before and then after injecting the nano-particle. In \cite{Ghandriche, Ahcene-Mourad-IICM}, we considered the $2D$-model using dielectric nano-particles. There, we did not split the problem into two steps. Rather, we derived direct formulas linking the measured pressure collected only on a {\it{single point}} $x$ on the accessible surface, to the internal values of the modulus of the electric field. In addition, using dimers (two close nano-particles), we showed that we can reconstruct, not only the electric field, but also the values of the (real part of the ) Green's functions on the centers of the dimer's nano-particles. From this Green function, we recover the permittivity. The main argument there is that under critical scales, on the size and the high values of the permittivity, we can choose the incident frequency so that we excite the dielectric resonances which are related to the eigenvalues of the Newtonian operator. Here, we propose to use plasmonic nano-particles. Measuring the induced pressure before and after injecting such a nano-particle, on a single point of $\partial \Omega $ but a band of frequencies, and taking their difference, we show that the generated curve has picks on incidence frequencies close to singular frequencies related to the eigenvalues of the Magnetization operator (that are the plasmonic resonances). With such behavior, we can construct those resonances. From these resonances, we extract the values of the permittivity. More details are given in section \ref{Inversion-method}.

\end{itemize}

\subsection{Statement of the results}
Let $\Omega$ be a $\mathcal{C}^2$-smooth and bounded domain. The nano-particle $D$ is taken of the form $D:=a\; B +z$ where $z$ models its location and $a$ its relative radius with $B$ as $\mathcal{C}^2$-smooth domain of maximum radius $1$. 
For later use, we introduce the integral operators of the volume potential $N^{k}(\cdot)$ and the Magnetization potential $\nabla M^{k}(\cdot)$, both acting on vector fields:
\begin{equation}\label{DefNDefMk}
N^{k}(f)(x):=\int_{B} \Phi_{k}(x,y) \, f(y)dy \quad \text{and} \quad  \nabla M^{k}(f)(x):=\nabla \int_{B}\underset{y}{\nabla}\Phi_{k}(x,y) \cdot f(y)dy,
\end{equation} 
where $\Phi_{k}(x,y) := \frac{e^{ik\left\vert x-y \right\vert}}{4\pi \vert x-y\vert}$ is the fundamental solution for Helmholtz equation in the entire space. Particularly, for $k = 0$ we obtain: 
\begin{equation}\label{DefNDefM}
N(f)(x):=\int_B\frac{1}{4\pi \vert x-y\vert} f(y)dy~~~~~~ \nabla M(f)(x):=\nabla \int_{B}\underset{y}{\nabla}\left( \frac{1}{4\pi \vert x-y\vert} \right) \cdot f(y)dy.
\end{equation} 
We recall the decomposition $(\mathbb{L}^2(B))^3=\mathbb{H}_{0}(\div = 0)(B) \oplus \mathbb{H}_{0}(Curl = 0)(B) \oplus \nabla \mathcal{H}armonic(B)$ where  
$\mathbb{H}_{0}(\div = 0)(B):=\{u \in \mathbb{H}(\div)(B); \nu \cdot u=0 \mbox{ on } \partial B\}$, $\mathbb{H}_{0}(Curl = 0) :=\{u \in \mathbb{H}(Curl)(B); \nu \times u=0 \mbox{ on } \partial B\}$ and 
$\nabla \mathcal{H}armonic(B):=\{u=\nabla \phi,~~ \Delta \phi =0 \mbox{ in } B\}$.

We can show, see later, that $N_{\displaystyle|_{\mathbb{H}_{0}\left(\div=0 \right)}}$ and $N_{\displaystyle|_{\mathbb{H}_{0}\left(Curl =0 \right)}}$ generate complete orthonormal bases $(\lambda_n^{1}, e^{1}_{n})_{n \in N}$ and $(\lambda_n^{2}, e^{2}_{n})_{n \in N}$ of $\mathbb{H}_{0}\left(\div=0 \right)$ and $\mathbb{H}_{0}\left(Curl =0 \right)$ respectively. In addition, it is known that $\nabla M: \nabla\mathcal{H}armonic \rightarrow \nabla\mathcal{H}armonic$ has a complete basis $(\lambda_n^{3}, e^{3}_{n})_{n \in N}$.

The permittivity $\varepsilon(\cdot)$ is defined as 
\begin{equation}\label{DefPermittivityfct}
\varepsilon(x) := \begin{cases} 
\epsilon_{\infty} & in \quad \mathbb{R}^{3} \setminus \Omega, \\
\epsilon_{0}(x) & in \quad \Omega \setminus D, \\
\epsilon_{p} & in \quad D,
\end{cases}
\end{equation}
where
\begin{equation*}\label{plasmonic}
\epsilon_p := \epsilon_{\infty}\left( 1+\frac{\omega^2_p}{\omega^2_0-\omega^2+i \omega \gamma} \right)
\end{equation*}
with $\omega_p$ as the electric plasma frequency, $\omega_0$ as the undamped frequency and $\gamma$ as the electric damping frequency.

Related to this, we set the index of refraction $\bm{n}$, in $\mathbb{R}^{3}$, given by\footnote{For $z \in \mathbb{C}$, given by $z = r \, e^{i \phi}$ with $- \pi < \phi \leq \pi$, the principal square root of $z$ is defined to be: $\sqrt{z} = \sqrt{r} \, e^{i \frac{\phi}{2}}$.} 
\begin{align}\label{Def-Index-Ref}
  \bm{n} := \begin{cases}
      \sqrt{\epsilon_{p} \, \mu} & \text{in $D$} \\
      \bm{n}_{0} & \text{in $\mathbb{R}^{3} \setminus D$}
    \end{cases} 
    \qquad \text{and} \qquad \bm{n}_{0} := \begin{cases}
      \sqrt{\epsilon_{0}(\cdot) \, \mu} & \text{in $\Omega$} \\
      \sqrt{\epsilon_{\infty} \, \mu} & \text{in $\mathbb{R}^{3} \setminus \Omega$}
    \end{cases}. 
\end{align}

We assume $\epsilon_0(\cdot)$ to be of class $\mathcal{C}^1$. Let $z\in \Omega$ and define
\begin{equation*}
f_n(\omega, \gamma):=\epsilon_0(z) - (\epsilon_0(z)-\epsilon_p)\lambda^{3}_n.
\end{equation*}
We show that in the square $\left( \omega_{0};~~\sqrt{ \omega^{2}_{p} + \omega^{2}_{0}} =: \omega_{max} \right) \times \left(0; ~~ \omega_{max} \, \Vert \frac{\Im \left(\epsilon_{0}(\cdot)\right) }{\Re \left(\epsilon_{0}(\cdot)\right)} \Vert_{L^{\infty}(\Omega)} =:\gamma_{max} \right) $, the dispersion equation $f_n(\omega, \gamma)=0$ has one and only one solution. For any $n_0$ fixed, we set $(\omega_{n_0}, \gamma_{n_0})$ to be the corresponding solution for $n=n_0$. 

\begin{theorem}\label{PrincipalTHM}
We assume $\Omega$ and $B$ (and hence $D$) to be of class $\mathcal{C}^2$. In addition, we assume $\epsilon_0(\cdot)$ to be of class $\mathcal{C}^1$ and satisfies the conditions\footnote{The first condition is a natural one in applications. The second condition is needed to derive and analyze (the singularity of the) the Green's kernel for the inhomogeneous Maxwell system.}
\begin{equation*}\label{condition-epsilon-0}\Re \epsilon_{0}(\cdot) >\epsilon_{\infty} ~~\mbox{ and}~~ \Vert \epsilon_{0}(\cdot) - \epsilon_\infty\Vert_{\mathbb{L}^{\infty}(\Omega)} \leq C
\end{equation*} 
where the positive constant $C:=C(\Omega)$ is given by $(\ref{Cdtalpha})$ and depends on $\Omega$ through the mapping property of the Newtonian operator. 
\bigskip

Let the used incident and damping frequencies $(\omega, \gamma)$ be such that 
\begin{equation}\label{freq-close-plasm}
\omega^2-\omega^2_{n_{0}} \sim a^h ~\mbox{ and }~ \gamma-\gamma_{n_0}\sim a^h \mbox{ for } h\in (0, 1). 
\end{equation}
1) We have the following approximation of the electric field 
\begin{equation}\label{THM-KMYHAYD}
\int_{D} \left\vert u_{1} \right\vert^{2}(x) \, dx = a^{3} \; \frac{\left\vert  \epsilon_{0}(z) \right\vert^{2} \, \left\vert u_{0}(z) \cdot \int_{B} e^{(3)}_{n_{0}}(x)dx \right\vert^{2}}{\left\vert  \epsilon_{0}(z) - \left(  \epsilon_{0}(z) - \epsilon_{p} \right) \, \lambda^{(3)}_{n_{0}}  \right\vert^{2}} + \mathcal{O}\left(a^{\min(3,4-3h)} \right).
\end{equation}
2) Let $x \in \partial \Omega$ and $s \geq \diam(D) + \dist(x,D)$. We have the following approximation of the average pressure:  
\begin{equation}\label{PrincipalFormula}
p^{\star}(x,s) - p^{\star}_{0}(x,s) = \frac{a^{3}}{4 \, \pi} \, \Im\left( \epsilon_{p} \right) \,  \frac{\left\vert \epsilon_{0}(z) \right\vert^{2} \, \left\vert \langle u_{0}(z) ; \int_{B} e^{(3)}_{n_{0}}(x) \, dx \rangle \right\vert^{2}}{\left\vert \epsilon_{0}(z) - \left(\epsilon_{0}(z) - \epsilon_{p} \right) \,   \lambda^{(3)}_{n_{0}} \right\vert^{2}} + \mathcal{O}\left(  a^{\min(3-h,4-3h)}  \right),  
\end{equation}
where 
\begin{equation*}
p^{\star}(x,s) := \int_{0}^{s} r \, \int_{0}^{r} p(x,t) \, dt \, dr \quad \text{and} \quad p^{\star}_{0}(x,s) := \int_{0}^{s} r \, \int_{0}^{r} p_0(x,t) \, dt \, dr
\end{equation*}
with $p_0(\cdot,\cdot)$ being the pressure generated by the medium in the absence of the nano-particle.
\end{theorem}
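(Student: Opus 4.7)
The plan is to split the proof into an interior resonant-field expansion on $D$, giving \eqref{THM-KMYHAYD}, and a time-integrated wave representation that converts this interior information into the boundary measurement \eqref{PrincipalFormula}. Let $\mathbb{G}_{\epsilon_0}$ denote the outgoing Green's tensor of the background Maxwell operator, i.e.\ with permittivity $\epsilon_0(\cdot)$ everywhere. Writing the perturbation in the form $\nabla\times\nabla\times u_1-\omega^2\mu\,\epsilon_0\,u_1=\omega^2\mu(\epsilon_p-\epsilon_0)\chi_D u_1$ and convolving with $\mathbb{G}_{\epsilon_0}$ yields, for $x\in\overline D$,
\begin{equation*}
u_1(x)-\omega^2\mu\int_{D}\mathbb{G}_{\epsilon_0}(x,y)\,(\epsilon_p-\epsilon_0(y))\,u_1(y)\,dy=u_0(x).
\end{equation*}
The decisive input, announced in the abstract, is the pointwise singularity
\begin{equation*}
\mathbb{G}_{\epsilon_0}(x,y)=-\frac{1}{\omega^2\mu\,\epsilon_0(x)}\,\nabla_x\nabla_y\Phi_0(x,y)+\Phi_0(x,y)\,I+\mathcal{R}(x,y),
\end{equation*}
valid for $y$ near $x\in\Omega$, with $\mathcal{R}$ substantially less singular. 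Establishing this expansion, and in particular the prefactor $1/\epsilon_0(x)$ which is absent from the constant-coefficient case, is the principal technical obstacle; I would address it by a freezing/parametrix argument combined with the Helmholtz decomposition of $\mathbb{G}_{\epsilon_0}$ and the $\mathcal{C}^1$-regularity of $\epsilon_0$.

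\textbf{Scaling and spectral projection yielding \eqref{THM-KMYHAYD}.} Plugging the singular part into the integral equation and freezing $\epsilon_0$ at $z$ up to an $\mathcal{O}(a)$ error, the leading equation on $D$ reduces to
\begin{equation*}
u_1+\frac{\epsilon_p-\epsilon_0(z)}{\epsilon_0(z)}\,\nabla M[u_1]=u_0(z)+\text{lower-order terms},
\end{equation*}
the non-singular pieces $\Phi_0\,I$ and $\mathcal{R}$ contributing only bounded operators of smaller norm. Rescaling $x=z+a\xi$ with $\xi\in B$ and expanding in the orthonormal basis $\{e_n^{(j)}\}_{j=1,2,3,\,n\in\mathbb{N}}$ of $(L^2(B))^3$, only the $e_n^{(3)}$-components couple to $\nabla M$, with eigenvalues $\lambda_n^{(3)}$. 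Projecting on $e_{n_0}^{(3)}$ produces the factor
\begin{equation*}
1+\frac{\epsilon_p-\epsilon_0(z)}{\epsilon_0(z)}\,\lambda_{n_0}^{(3)}=\frac{f_{n_0}(\omega,\gamma)}{\epsilon_0(z)},
\end{equation*}
which under \eqref{freq-close-plasm} is of size $a^h$, whereas every other coefficient remains bounded. Combined with the rescaling Jacobian $a^{3}$, Parseval's identity on $D$, and the Taylor expansion $u_0(y)=u_0(z)+\mathcal{O}(a)$, this gives \eqref{THM-KMYHAYD}; the error $\mathcal{O}(a^{\min(3,4-3h)})$ accounts for the non-resonant modes and the lower-order parts of $u_0$, $\epsilon_0$, and $\mathcal{R}$ after division by $f_{n_0}$.

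\textbf{Kirchhoff--Poisson representation and reduction to a ball integral.} For the second identity, apply the Kirchhoff--Poisson formula to \eqref{pressurwaveequa}, whose initial time-derivative vanishes:
\begin{equation*}
p(x,t)=\frac{\partial}{\partial t}\Bigl(\frac{1}{4\pi c_s^{2}\,t}\int_{|y-x|=c_s t}p(y,0)\,dS(y)\Bigr),
\end{equation*}
so that $\int_0^{r}p(x,t)\,dt=\frac{1}{4\pi c_s^{2}r}\int_{|y-x|=c_s r}p(y,0)\,dS(y)$. Integrating against $r\,dr$ and passing to polar coordinates collapses the family of spherical means into a solid-ball integral,
\begin{equation*}
p^{\star}(x,s)=\frac{1}{4\pi c_s^{3}}\int_{|y-x|\le c_s s}p(y,0)\,dy,
\end{equation*}
and similarly for $p^{\star}_0$. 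The assumption $s\ge\diam(D)+\dist(x,D)$ guarantees that this ball contains $D$, so subtracting and using the definition of $\varepsilon(\cdot)$,
\begin{equation*}
p^{\star}(x,s)-p^{\star}_{0}(x,s)=\frac{\omega\beta_0}{4\pi c_s^{3}c_p}\Bigl(\Im(\epsilon_p)\int_D|u_1|^2\,dy+\int_{\Omega\setminus D}\Im(\epsilon_0)\bigl(|u_1|^2-|u_0|^2\bigr)\,dy\Bigr).
\end{equation*}

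\textbf{Main term, remainder, and identification of the hard step.} Inserting \eqref{THM-KMYHAYD} into the $D$-integral produces the announced main term of \eqref{PrincipalFormula}. The $\Omega\setminus D$-correction is handled by applying the Lippmann--Schwinger representation at $x\in\Omega\setminus\overline D$, where $\mathbb{G}_{\epsilon_0}(x,\cdot)$ is smooth: combining the $L^2(D)$-estimate for $u_1$ from Step 2 with Cauchy--Schwarz yields $|u_1(x)-u_0(x)|=\mathcal{O}(a^{3-h})$ uniformly on $\Omega\setminus D$, hence $\bigl||u_1|^2-|u_0|^2\bigr|\le 2|u_0||u_1-u_0|+|u_1-u_0|^2=\mathcal{O}(a^{3-h})$ by the uniform boundedness of $u_0$, and integration over the bounded set $\Omega\setminus D$ contributes $\mathcal{O}(a^{3-h})$. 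Combined with the $\mathcal{O}(a^{\min(3,4-3h)})$ error from \eqref{THM-KMYHAYD} (multiplied by the bounded factor $\Im(\epsilon_p)$), this produces the total error $\mathcal{O}(a^{\min(3-h,4-3h)})$ claimed in \eqref{PrincipalFormula}. As already flagged, the hardest single ingredient is the singularity expansion of $\mathbb{G}_{\epsilon_0}$ in Step~1; once it is in place, Steps~2--4 are essentially standard spectral perturbation plus a wave-equation representation.
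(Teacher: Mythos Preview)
Your overall architecture coincides with the paper's: the Lippmann--Schwinger equation via the background Green tensor, the singularity extraction that produces the $\nabla M$ operator with the $1/\epsilon_0(z)$ prefactor, the spectral projection onto the $\nabla\mathcal{H}armonic$ basis to isolate the resonant mode, and the Kirchhoff--Poisson averaging that turns $p^{\star}$ into a solid-ball integral. Your identification of the Green-tensor expansion as the main technical hurdle is also exactly right; the paper devotes an entire section to it and obtains a decomposition of the form you wrote, plus a secondary term $\nabla\nabla M(\Phi_0\nabla\epsilon_0)$ that you would discover when carrying out the parametrix.

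There is, however, a genuine gap in Step~4. The claim that $|u_1(x)-u_0(x)|=\mathcal{O}(a^{3-h})$ holds \emph{uniformly} on $\Omega\setminus D$ does not follow from Cauchy--Schwarz, because for $x$ close to $\partial D$ the kernel $\mathbb{G}_{\epsilon_0}(x,\cdot)$ is \emph{not} uniformly smooth on $D$: its leading part behaves like $|x-y|^{-3}$, so $\bigl(\int_D|\mathbb{G}_{\epsilon_0}(x,y)|^2\,dy\bigr)^{1/2}$ blows up like $\dist(x,D)^{-3/2}$, and the resulting pointwise bound is not integrable over the boundary layer. The paper avoids this by never seeking a uniform pointwise estimate: it plugs the representation $u_1=u_0+\eta(z)\nabla M(u_1)+Err_0+Err_\Gamma$ directly into $|u_1|^2$, expands, and bounds each cross term in $L^2\bigl(B(x,s)\cap(\Omega\setminus D)\bigr)$ separately, exploiting that $\nabla M:L^2(D)\to L^2(\Omega\setminus D)$ carries the factor $a^{3/2}$ from the smallness of $D$ (their Lemma estimating $\bm{T}(x,s)$). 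To close your argument you would need to replace the uniform pointwise bound by this $L^2$-based splitting; a single application of Cauchy--Schwarz at the level of $\int_{\Omega\setminus D}|u_0||u_1-u_0|$ together with the crude operator bound $\|u_1-u_0\|_{L^2(\Omega\setminus D)}\lesssim\|u_1\|_{L^2(D)}=\mathcal{O}(a^{3/2-h})$ only gives $\mathcal{O}(a^{3/2-h})$, which is too weak, so the finer term-by-term analysis is actually needed.
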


To justify these results, we need to derive the dominating fields in both the acoustic and the electromagnetic models that constitute the photo-acoustic model. The most difficult issue is in deriving the dominating electric field generated by the plasmonic resonances. These are related to the eigenvalues of the Magnetization operator, at zero wave number, restricted to the sub-space of grad-harmonic functions. To do this, we first use the natural decomposition of the $\mathbb{L}^{2}$-space as $\mathbb{L}^{2}=\mathbb{H}_{0}\left(\div = 0 \right) \oplus \mathbb{H}_{0}\left(Curl = 0 \right) \oplus \nabla\mathcal{H}armonic $ to which we correspond a spectral decomposition given by the eigenvalues-eigenfunctions of the vector Newtonian operator restricted to $\mathbb{H}_{0}\left(\div = 0 \right)$ and $\mathbb{H}_{0}\left(Curl = 0 \right)$ and the above mentioned eigenvalues-eigenfunctions of the Magnetization operator. As the electromagnetic background is inhomogeneous, unlike the elliptic models, as in acoustics or elasticity for instance, the corresponding Green's kernel has unusual singularities. We derive two main decompositions of this kernel in terms of the singularities. The first one is valid in $\mathbb{R}^3$ as a sum of the explicit kernel, of the Maxwell system with constant electromagnetic parameters, and a kernel\footnote{For $s>1$ we denote by $\mathbb{L}^{s-}$ the space of functions belonging to $\mathbb{L}^{s-\delta}$ for every $\delta$ such that $0<\delta<s-1$.} having $L^{{\frac{3}{2}}{-}}$ integrable singularity. This allows us to define and justify the equivalent Lippmann-Schwinger equation. The second one is valid locally, near any fixed source point, as a sum of three kernels. The first one is the kernel of the grad-harmonic operator, i.e. the kernel of the Magnetization operator, the second is of the form $\nabla K$, with $K$  having an $L^{3{-}}$ integrable singularity and the third one with an $L^{3{-}}$ integrable singularity as well. This last decomposition is key in deriving the dominating electric field generated by the plasmonic resonances. These decompositions of the Green's kernel, see for instance $(\ref{ExpansionofGkII})$, are more precise and general than the ones derived in the case of piecewise constant permittivity, see for instance \cite{cutzach1998existence} and \cite{Kirsch}.

\subsection{Inversion of the photo-acoustic imaging modality using plasmonic contrast agents}\label{Inversion-method}

Here, we discuss how to use the approximation formulas we derived in the previous section to the actual inversion procedure for the photo-acoustic modality using plasmonic nano-particles as contrast agents.
\bigskip

\begin{enumerate}
\item Let $x \in \partial \Omega$ be fixed. Let also $(\omega, \gamma)$ be any couple of incident and damping frequencies. 
\bigskip

\begin{enumerate}

\item If $s < dist(x, D)$, then $p^{\star}(x,s)-p_0^{\star}(x,s)= \mathcal{O}\left( a^{3-h} \right)$ for any  $(\omega, \gamma)$. This property, which is related to the finiteness of the speed of propagation, can be shown by combining (\ref{Finite-speed-propagation}) and {\bf{Lemma}} \ref{Anal-T}. 
\bigskip

\item \label{NVIM} If $s\geq \diam(D) +dist(x, D),$ then, \emph{under the condition of the existence of $n_0 \in \mathbb{N}$ such that $ \int_{B} e^{(3)}_{n_0}(x) \, dx \neq 0 $}, we have $p^{\star}(x,s)-p_0^{\star}(x,s) \sim a^{3-2h}$, for any $(\omega, \gamma)$ close to $(\omega_{n_0}, \gamma_{n_0})$ as in (\ref{freq-close-plasm}). This comes from
(\ref{PrincipalFormula}).
\end{enumerate}
\bigskip

From these formulas, we can estimate $dist(x, D)$ with an error of the order of $diam(D)\sim a$. Therefore, measuring $p^{\star}(x,s)-p_0^{\star}(x,s)$ for three different points $x_1, x_2, x_3$ on $\partial \Omega$, we can localize the injected nano-particle with an error of the order $a$.
\bigskip

\item Let $x \in \partial \Omega$ and $s\geq \diam(D) +dist(x, D)$ be fixed, we define 
\begin{equation*}
I_z(\omega, \gamma):=\vert p^{\star}(x,s, \omega)-p_0^{\star}(x,s, \omega)\vert
\end{equation*}
on the square $\left( \omega_{0};~~\sqrt{ \omega^{2}_{p} + \omega^{2}_{0}} := \omega_{max} \right) \times \left(0; ~~ \omega_{max} \, \Vert \frac{\Im \left(\epsilon_{0}(\cdot)\right) }{\Re \left(\epsilon_{0}(\cdot)\right)} \Vert_{L^{\infty}(\Omega)} := \gamma_{max} \right)$.
\bigskip

According to (\ref{PrincipalFormula}), this functional has a sequence of picks $(\omega_n, \gamma_n), n=1, 2, ...$
Observe that the index $n$ is related to one of the eigenvalue $\lambda^{(3)}_{n}$ of $\nabla M$. From \textbf{Lemma} $\ref{LemmaClaim}$, we know that 
\begin{equation}\label{monotonicity-introduction}
\lambda^{(3)}_{n} < \lambda^{(3)}_{m} \Rightarrow \omega^2_n <\omega^2_m. 
\end{equation}
Therefore from the picks $(\omega_n, \gamma_n)$ of the functional $I_z(\omega, \gamma)$ we can choose anyone of them, say $(\omega_{n_0}, \gamma_{n_0})$. From (\ref{monotonicity-introduction}), to $\omega_{n_0}$ we correspond a unique $\lambda^{(3)}_{n_0}$, via the ordering of $\lambda^{(3)}_{n}$'s. From $f_{n_0}(\omega_{n_0}, \gamma_{n_0})=0$, we obtain
\begin{equation*}
\epsilon_0(z)= - \, \frac{\epsilon_p \, \lambda^{(3)}_{n_0}}{1-\lambda^{(3)}_{n_0}}.
\end{equation*}
\end{enumerate}

\smallskip

Observe that the validity of the imaging procedure works for nano-particles for which $ \int_{B} e^{(3)}_{n}(x) \, dx \neq 0 $ for some $n$'s, see the condition mentioned in $(\ref{NVIM})$. This condition can be clarified for particular shapes. For nano-particles $B$ of ellipsoidal shape, with semi-axes given by $r_{1}, r_{2}$ and $r_{3}$ we use the fact that: 
\begin{equation}
N(1)(x) = \frac{r_{1} \, r_{2} \, r_{3}}{4} \, \int_{0}^{\infty} \left( 1 - \sum_{j=1}^{3} \frac{x^{2}_{j}}{\left( s + r^{2}_{j} \right)} \right) \, \frac{1}{\sqrt{\left( s + r^{2}_{1} \right) \, \left( s + r^{2}_{2} \right) \, \left( s + r^{2}_{3} \right)}}  \, ds, \quad x \in B,  
\end{equation}
see for instance Theorem 1.1 of \cite{shahgholian1991newtonian}. 
Therefore, by  straightforward computations, using the relation $\nabla M \left( I  \right) = - \nabla \div( N (I) ) =-\nabla \div ( N(1) I) $, we derive:
\begin{equation}\label{RD}
\nabla M \left( I \right)(x) = \frac{r_{1} \, r_{2} \, r_{3}}{2} \begin{pmatrix}
\mathcal{I}_{1}(r_{1}, r_{2}, r_{3}) & 0 & 0 \\
0 & \mathcal{I}_{2}(r_{1}, r_{2}, r_{3}) & 0 \\
0 & 0 & \mathcal{I}_{3}(r_{1}, r_{2}, r_{3})
\end{pmatrix}, \,\, x \in B,
\end{equation}
where, for $j=1,2,3$, we have: 
\begin{equation}\label{RDIj}
\mathcal{I}_{j}(r_{1}, r_{2}, r_{3}) := \int_{0}^{+\infty} \frac{1}{ \left(s + r^{2}_{j} \right)} \, \frac{1}{\sqrt{\left(s + r^{2}_{1} \right)\,\left(s + r^{2}_{2} \right)\,\left(s + r^{2}_{3} \right)}} \, ds.
\end{equation}  
Using the fact that $\lambda^{(3)}_{n}  \, e^{(3)}_{n} = \nabla M \left( e^{(3)}_{n} \right)$ we get, 
\begin{equation*}
\lambda^{(3)}_{n}  \, I \cdot \int_{B} e^{(3)}_{n}(x) \, dx = \int_{B} \lambda^{(3)}_{n}  \, e^{(3)}_{n}(x) \, dx =  \int_{B} \nabla M \left( e^{(3)}_{n} \right)(x) \, dx =  \int_{B} I \cdot \nabla M \left( e^{(3)}_{n} \right)(x) \, dx,
\end{equation*}
which, by using the self-adjointness of the Magnetization operator, becomes 
\begin{equation}\label{LambdaInt=}
\lambda^{(3)}_{n}  \, I \cdot \int_{B} e^{(3)}_{n}(x) \, dx =  \int_{B}  \nabla M \left( I \right)(x) \cdot e^{(3)}_{n}(x) \, dx.
\end{equation}
Now, thanks to the constancy of  $\nabla M \left( I \right)$ inside $B$, see $(\ref{RD})$, we deduce that:
\begin{equation*}
\left( \lambda^{(3)}_{n}  \, I - \frac{r_{1} \, r_{2} \, r_{3}}{2} \begin{pmatrix}
\mathcal{I}_{1}(r_{1}, r_{2}, r_{3}) & 0 & 0 \\
0 & \mathcal{I}_{2}(r_{1}, r_{2}, r_{3}) & 0 \\
0 & 0 & \mathcal{I}_{3}(r_{1}, r_{2}, r_{3})
\end{pmatrix} \right) \cdot \int_{B} e^{(3)}_{n}(x) \, dx = \begin{pmatrix}
0 \\
0 \\
0
\end{pmatrix}.
\end{equation*}
As the matrix on the left hand side is a diagonal one, we deduce that we can have at most three eigenvalues, $\lambda^{(3)}_{n}$, for which the corresponding eigenfunctions might have the property $ \int_{B} e^{(3)}_{n}(x) \, dx \neq 0 $. The computation of $\int_{B} e^{(3)}_{n}(x) \, dx$ in the case of ellipsoidal shape is a difficult task. We restrict our computations to the particular case of a unit ball, which corresponds to take $r_{1} = r_{2} = r_{3} = 1$. By straightforward computations, using the definition of $\nabla M \left( I \right)$, see $(\ref{RD})$, and the formula of $\mathcal{I}_{j}(r_{1}, r_{2}, r_{3})$, see $(\ref{RDIj})$, we obtain  
\begin{equation}\label{NablaMIBall}
\nabla M (I)(x) = \frac{1}{3} \, I, \quad x \in B. 
\end{equation}
Therefore, $\frac{1}{3}$ is the only eigenvalue for which the corresponding eigenfunctions might have non-zero average. In addition, using the fact that $e^{(3)}_{n} = \nabla SL(u_{n})$, where $SL$ is the Single Layer operator and $u_{n}$ are the eigenfunctions of the Double Layer operator, which are computed explicitly in several references, see for example \cite{Ritter}, we derive that: 
\begin{equation}\label{IntBen3}
\int_{B} e^{(3)}_{n}(x) \, dx = \frac{2}{9} \, \sqrt{\frac{\pi}{3}} \, \begin{pmatrix}
0 \\ 0 \\ 1 
\end{pmatrix} \delta_{n,1},
\end{equation}  
where $\delta_{n,1}$ is the Kronecker symbol.
\bigskip

Combining $(\ref{NablaMIBall}), (\ref{IntBen3})$ and $(\ref{LambdaInt=})$ we end up with $\lambda^{(3)}_{1} = \frac{1}{3}$, which is the first eigenvalue of the Magnetization operator $\nabla M (\cdot)$ in the unit ball, and the corresponding eigenfunctions have non-zero average. 
\bigskip

Hence, in the imaging procedure described above, using a nanoparticle with a spherical shape, the functional 
\begin{equation*}
I_z(\omega, \gamma):=\vert p^{\star}(x,s, \omega)-p_0^{\star}(x,s, \omega)\vert
\end{equation*}
{\it{has one and only one pick}} in the square $\left( \omega_{0};~~\sqrt{ \omega^{2}_{p} + \omega^{2}_{0}} := \omega_{max} \right) \times \left(0; ~~ \omega_{max} \, \Vert \frac{\Im \left(\epsilon_{0}(\cdot)\right) }{\Re \left(\epsilon_{0}(\cdot)\right)} \Vert_{L^{\infty}(\Omega)} := \gamma_{max} \right)$.
\bigskip

\begin{remark}\label{Drude-works-too}
It is worth noticing that the results above can also be derived using the Drude model for the permittivity, see for instance \cite{engheta2006metamaterials} formula (1.5), instead of the Lorentz model. 
\end{remark}
\bigskip

\begin{remark}\label{Other-options-regarding-gamma}
In our analysis, as we have seen, it is mandatory to vary both the incident frequencies $\omega$ and the damping frequencies $\gamma$. This is can turn out to be expensive from the point of view of applications as it would mean that one should change the nano-particles to change $\gamma$. In the following, we give two ways to overcome this eventual issue.

\begin{enumerate}
\item In the case where $\Im\left(\epsilon_{0}(z) \right)$ is very small, or mathematically zero, then we can take the damping frequency $ \gamma $ small as well but fixed. In this case, we only need to vary the incident frequency. Observe that the traditional photo-acoustic experiment applies only to electrically highly conducting tissues.  However, the photo-acoustic experiment based on using contrast agents can deal with non-conductive tissues as well (as for benign or early stage tumors). 
\bigskip

\item Instead of varying both the incident and damping frequencies, we allow the frequencies $\omega$ to be in the complex plan. In this case, we can derive similar reconstruction formulas. 

\end{enumerate}
More details can be found in Remark \ref{Other-options}.
\end{remark}
\bigskip

The remaining part of the manuscript is divided as follows. In Section \ref{Section-1}, we give the proof of Theorem \ref{PrincipalTHM} postponing the construction of the Green's tensor, the invertibility of the Lippmann-Schwinger equation and certain a priori estimates of the electric fields to the next sections. In Section \ref{Green's-function-and-LSE}, we construct the Green's tensor for our Maxwell model and provide its singularity analysis. These properties are used then to derive, and give sense to, the Lippmann-Schwinger equation. In Section \ref{proof-pro-2.1}, we prove the a priori estimates used in the proof of Theorem \ref{PrincipalTHM}. 
In Section \ref{Appendix}, we provide the spectral decomposition of the vector space $(L^2(D))^3$ based on the eigenvalues of the Newtonian and Magnetization operators. In addition, we analyze the dispersion equations $f_{n}(\omega, \gamma)=0$ used also in the proof of Theorem \ref{PrincipalTHM}.

\section{Proof of Theorem \ref{PrincipalTHM}}\label{Section-1}

\subsection{Approximation of the Lippmann Schwinger equation and proof of (\ref{THM-KMYHAYD})}\label{section2LSE}\
\\
As shown in Section \ref{Subsection-Distribution} the solution, in distributional sense, of $(\ref{eq:electromagnetic_scattering})$ can be written as solution of the following Lippmann-Schwinger equation
\begin{equation}\label{SK0}
u_{1}(x) + \omega^{2}  \, \int_{D} G_{k}(x,y) \cdot u_{1}(y) \, (\bm{n}_{0}^{2}(y) - \bm{n}^{2}(y) ) \, dy = u_{0}(x), \quad x \in \mathbb{R}^{3}, 
\end{equation} 
where $G_{k}(\cdot,\cdot)$ is the Green kernel for Maxwell's equation for the inhomogeneous background, defined as solution of: 
\begin{equation*}
\underset{y}{\nabla} \times \underset{y}{\nabla} \times G_{k}(x,y) - \omega^{2} \, \bm{n}_{0}^{2}(y) \, G_{k}(x,y) = \underset{x}{\delta}(y) \, \bm{I}, \;\, x, y \in \mathbb{R}^{3}, 
\end{equation*}
such that each column of $G_{k}(\cdot,\cdot)$ satisfies the outgoing radiation condition 
\begin{equation*}
\lim_{\left\vert x \right\vert \rightarrow +\infty} \;\; \left\vert x \right\vert \;\; \left( \underset{y}{\nabla} \times G_{k}(x,y) \times \frac{x}{\left\vert x \right\vert} - i \, k \, G_{k}(x,y) \right) = 0.
\end{equation*}
Recalling the definition of the index of refraction $\bm{n}(\cdot)$, see $(\ref{Def-Index-Ref})$, we get
\begin{equation}\label{SK}
u_{1}(x) + \omega^{2} \, \mu \, \int_{D} G_{k}(x,y) \cdot u_{1}(y) \, (\epsilon_{0}(y)-\epsilon_{p}) \, dy = u_{0}(x), \quad x \in \mathbb{R}^{3}, 
\end{equation} 
The formal representation of the convolution part is justified in Section \ref{Subsection-Distribution} as well. The following theorem, on the singularity analysis of the Green's function, is of importance to reduce the complexities of the integral equation $(\ref{SK})$, and then invert it.  
\begin{theorem}\label{DH} 
The Green kernel $G_{k}(\cdot, \cdot)$ admit the following decomposition:
\begin{equation}\label{DecompositionGreenKernel}
G_{k}(x,z) = \Upsilon(x,z) + \Gamma(x,z), \quad x \neq z,  
\end{equation}
where $\Upsilon(\cdot,\cdot)$ is the kernel defined by
\begin{equation}\label{AI-Homogeneous}
\Upsilon(x,z) := \frac{1}{\omega^{2} \, \mu \, \epsilon_{0}(z)} \, \underset{x}{\nabla} \, \underset{x}{\div} \left(  \Phi_{0}(x,z) \, \bm{I} \right), \;\; x \neq z, 
\end{equation}
and the remainder part $\Gamma(\cdot, \cdot)$, for all element $x$ near $z$, is given by: 
\begin{equation}\label{AY}
\Gamma(x,z) := \frac{-1}{\omega^{2} \, \mu \, \left(\epsilon_{0}(z)\right)^{2}} \, \underset{x}{\nabla} \, \underset{x}{\nabla} M \left(  \Phi_{0}(\cdot,z) \, \nabla \epsilon_{0}(z) \right)(x) + W_{4}(x,z), \;\; x \neq z, 
\end{equation}
where, for an arbitrarily and sufficiently small positive $\delta$, the term $W_{4}(\cdot,z)$ is an element in $\mathbb{L}^{\frac{3(3-2\delta)}{(3+2\delta)}}(D)$.
\end{theorem}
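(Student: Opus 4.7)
My plan is to construct the decomposition by parametrix expansion around the frozen-coefficient Maxwell tensor at $z$. The ansatz $\Upsilon(x,z)=(\omega^{2}\mu\epsilon_{0}(z))^{-1}\nabla_{x}\mathrm{div}_{x}(\Phi_{0}(x,z)I)$ is designed so that each of its columns is a pure gradient, forcing $\nabla_{x}\times\nabla_{x}\times\Upsilon\equiv 0$, while it retains the leading distributional $|x-z|^{-3}$ singularity expected of $G_{k}$ near the diagonal (compare with the constant-coefficient Maxwell kernel $\Phi_{k}I+k(z)^{-2}\nabla\nabla\Phi_{k}$ at $k(z)^{2}=\omega^{2}\mu\epsilon_{0}(z)$). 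Writing $G_{k}=\Upsilon+\Gamma$ and setting $L_{x}:=\nabla_{x}\times\nabla_{x}\times-\omega^{2}\mu\epsilon_{0}(x)$, a short distributional computation using $\nabla\mathrm{div}(\Phi_{0}I)=\nabla\nabla\Phi_{0}+\delta_{z}I$ (equivalent to $\Delta_{x}\Phi_{0}(\cdot,z)=-\delta_{z}$) shows that
$$L_{x}\Gamma(x,z)=\nabla_{x}\times\nabla_{x}\times(\Phi_{0}(x,z)I)+\frac{\epsilon_{0}(x)-\epsilon_{0}(z)}{\epsilon_{0}(z)}\nabla_{x}\nabla_{x}\Phi_{0}(x,z).$$
Both right-hand-side terms are strictly less singular than $\nabla\nabla\Phi_{0}$, enabling inversion by a Neumann series built on the parametrix $\Upsilon$.

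Next, I would invert $L_{x}$ on each source. The first, $\nabla\times\nabla\times(\Phi_{0}I)=L_{x}(\Phi_{0}I)+\omega^{2}\mu\epsilon_{0}(x)\Phi_{0}I$, contributes $\Phi_{0}(\cdot,z)I$ plus a lower-order convolution correction; the $\Phi_{0}I$ piece has singularity $|x-z|^{-1}\in\mathbb{L}^{3-}$ and is absorbed into $W_{4}$. For the second source I Taylor-expand using $\epsilon_{0}\in\mathcal{C}^{1}$,
$$\epsilon_{0}(x)-\epsilon_{0}(z)=\nabla\epsilon_{0}(z)\cdot(x-z)+\rho(x,z),\qquad \rho(x,z)=o(|x-z|).$$
The leading piece $\epsilon_{0}(z)^{-1}\nabla\epsilon_{0}(z)\cdot(x-z)\,\nabla_{x}\nabla_{x}\Phi_{0}(x,z)$ has $|x-z|^{-2}$ singularity (locally $\mathbb{L}^{3/2-}$) and, when pushed through $L_{x}^{-1}$ via the parametrix $\Upsilon$, it is recast by two integrations by parts in the convolution variable (shifting the $\nabla_{y}$'s from $\nabla_{y}\nabla_{y}\Phi_{0}(y,z)$ onto $\Phi_{0}(x,y)$, using $\nabla_{y}\Phi_{0}(x,y)=-\nabla_{x}\Phi_{0}(x,y)$ and the fact that $\nabla_{y}(y-z)=I$ together with the Euler relation $(y-z)\cdot\nabla_{y}\Phi_{0}(y,z)=-\Phi_{0}(y,z)$) into precisely
$$-\frac{1}{\omega^{2}\mu\epsilon_{0}(z)^{2}}\,\nabla_{x}\nabla M\bigl(\Phi_{0}(\cdot,z)\nabla\epsilon_{0}(z)\bigr)(x),$$
the extra factor $\epsilon_{0}(z)^{-1}$ arising from combining the prefactors of $\Upsilon$ and of the Taylor term. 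The Taylor remainder $\rho(x,z)\,\nabla\nabla\Phi_{0}$ is of order $o(|x-z|^{-2})$ and, together with the $\Phi_{0}I$-contribution and the higher Neumann iterates (obtained by replacing $\Upsilon$ by $G_{k}$ itself through the paper's global first-stage decomposition with $\mathbb{L}^{3/2-}$ remainder), goes into $W_{4}$. The exponent $3(3-2\delta)/(3+2\delta)$ then follows by tracking each contribution through the Hardy-Littlewood-Sobolev inequality and the $\mathbb{L}^{p}$-boundedness of Riesz transforms for $1<p<\infty$.

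The main obstacle is the non-ellipticity of $L_{x}$: its principal symbol $\xi\otimes\xi-|\xi|^{2}I$ annihilates the longitudinal modes, so $L_{x}^{-1}$ admits no single pseudodifferential parametrix and one is forced to treat the longitudinal and transverse components separately. This is exactly why $\Upsilon$ is designed columnwise as a pure gradient — it absorbs the full longitudinal singularity of $G_{k}$, leaving $\Gamma$ with at most $|x-z|^{-2}$ behaviour. A secondary but delicate technical point is the rigorous justification of the integrations by parts that produce the Magnetization kernel $\nabla M$: neither the source $(x-z)\nabla\nabla\Phi_{0}$ nor the parametrix $\Upsilon$ is absolutely integrable near the two diagonals $y=x$ and $y=z$, so the argument must regularise, exploit the principal-value structure of $\nabla\nabla\Phi_{0}$, and pass to the limit before concluding that the remainder lies in the claimed $\mathbb{L}^{3-}$ class.
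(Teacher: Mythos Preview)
Your route is genuinely different from the paper's. The paper does \emph{not} freeze the coefficient at $z$ and compute $L_{x}\Gamma$ directly. Instead it first subtracts the free-space Maxwell tensor $\Upsilon_{k}$ at the \emph{background} wavenumber $k=\omega\sqrt{\mu\epsilon_{\infty}}$, obtaining a PDE for $\Gamma$ with source $(\alpha(\cdot)-\alpha_{\infty})\Upsilon_{k}$; this source is rewritten as $Curl(f)+g+c\,\delta_{z}I$ and the resulting equation is split into three sub-problems for $W_{1},W_{2},\Gamma^{\delta}$. Each $W_{j}$ is solved by a Lippmann--Schwinger integral equation whose invertibility comes from the explicit smallness hypothesis $\|\epsilon_{0}-\epsilon_{\infty}\|_{\mathbb{L}^{\infty}}\leq C$ (the constant $C$ in the theorem's hypotheses), together with Calder\'on--Zygmund and Sobolev embedding to reach $\mathbb{L}^{3(3-2\delta)/(3+2\delta)}$. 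The $\nabla\nabla M$ term is then extracted from $W_{2}$ by a \emph{local} Helmholtz decomposition $W_{2}=V+\nabla\theta$ on a ball $D^{\star}\supset D$ and by solving the Dirichlet Poisson problem for $\theta$ with the explicit Green function of the ball; only after all this is the result repackaged as $\Upsilon+\Gamma$ with the $\Upsilon$ of the statement. Your frozen-coefficient parametrix approach is more in the spirit of pseudodifferential calculus and is conceptually cleaner when it works.

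There are, however, two genuine gaps in your sketch. First, your parametrix $\Upsilon$ has columns that are pure gradients, so it is \emph{not} a parametrix for $L_{x}$ in any standard sense: $L_{x}\Upsilon$ is not $\delta_{z}I$ modulo a smoothing operator, and convolving a source with $\Upsilon$ does not approximate $L_{x}^{-1}$ on transverse components at all. When you write ``pushed through $L_{x}^{-1}$ via the parametrix $\Upsilon$'' and then ``Neumann iterates'', you have not said what operator is being iterated or why it contracts; the paper's answer is precisely the smallness condition on $\|\epsilon_{0}-\epsilon_{\infty}\|_{\mathbb{L}^{\infty}}$ applied to a Lippmann--Schwinger equation built from the full free-space Maxwell kernel, and your outline never invokes that hypothesis. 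Second, the identity you quote, $\nabla\mathrm{div}(\Phi_{0}I)=\nabla\nabla\Phi_{0}+\delta_{z}I$, is false as written (the left side \emph{is} the distributional Hessian $\nabla\nabla\Phi_{0}$, with no extra delta); your formula for $L_{x}\Gamma$ is nevertheless correct because the delta reappears through $-\Delta\Phi_{0}=\delta_{z}$ inside $\nabla\times\nabla\times(\Phi_{0}I)$. More substantively, the integrations by parts you describe to produce $\nabla\nabla M(\Phi_{0}(\cdot,z)\nabla\epsilon_{0}(z))$ are the crux of the argument and you yourself flag them as the ``delicate technical point''; the paper avoids this entirely by going through the Dirichlet Green function, which makes the boundary contributions explicit and keeps everything in $\mathbb{L}^{p}$ from the start.
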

\begin{proof}
The proof and details about the decomposition of the kernel $G_{k}(\cdot,\cdot)$ are given in Section $\ref{AppendixGreenKernel}$.
\end{proof}
We restrict the study of the equation $(\ref{SK})$ to the domain $D$ and we use the decomposition $(\ref{DecompositionGreenKernel})$ to rewrite it  as:
\begin{equation}\label{Ups=Hess+I}
u_{1}(x) + \omega^{2} \, \mu \, \int_{D} \Upsilon(x,y) \cdot u_{1}(y) \, (\epsilon_{0}(y)-\epsilon_{p}) \, dy = u_{0}(x) + Err_{\Gamma}(x),
\end{equation}
where $Err_{\Gamma}(x)$ is the vector field given by  
\begin{equation}\label{Err-Gamma}
Err_{\Gamma}(x) := - \omega^{2} \, \mu \, \int_{D} \Gamma(x,y) \cdot u_{1}(y) \, (\epsilon_{0}(y) - \epsilon_{p}) \, dy, \quad x \in D.
\end{equation}
Now, using the expression $(\ref{AI-Homogeneous})$, we  reformulate $(\ref{Ups=Hess+I})$ as 
\begin{equation*}
u_{1}(x) -  \underset{x}{\nabla} \int_{D} \underset{y}{\nabla} \Phi_{0}(x,y) \cdot u_{1}(y) \, \frac{\left( \epsilon_{0}(y) - \epsilon_{p} \right)}{\epsilon_{0}(y)} \, dy = u_{0}(x) + Err_{\Gamma}(x).
\end{equation*}
We set 
\begin{equation}\label{Defeta}
\eta(\cdot) := \frac{\left( \epsilon_{0}(\cdot) - \epsilon_{p} \right)}{\epsilon_{0}(\cdot)}
\end{equation}
and use the definition of the Magnetization operator, see  $(\ref{DefNDefM})$, to rewrite the previous equation as: 
\begin{equation*}
u_{1}(x) - \nabla M \left(u_{1} \, \eta \right)(x)   \overset{}{=}  u_{0}(x) + Err_{\Gamma}(x), 
\end{equation*}
and then, by Taylor expansion for the function $\eta(\cdot)$ near the center $z$, we get
\begin{equation}\label{sppo}
u_{1}(x) - \eta(z)  \,  \,  \nabla M(u_{1})(x) = u_{0}(x) + Err_{0}(x) + Err_{\Gamma}(x), 
\end{equation}
where 
\begin{equation}\label{WN}
Err_{0}(x) := \nabla \, M \left(  u_{1}(\cdot) \, \int_{0}^{1} \, \nabla \, \eta(z+t(\cdot -z)) \cdot (\cdot -z) \, dt  \right)(x).
\end{equation} 
Set $W(\cdot)$ to be the scattering matrix defined by 
\begin{equation}\label{SAH}
W(\cdot) =  \left[ I - \, \overline{\eta(z)} \,   \nabla M \right]^{-1}\left( \bm{I} \right)(\cdot).
\end{equation}
Then, successively, taking the inverse of $\left[ I - \, \eta(z) \,   \nabla M \right]$, on both sides of $(\ref{sppo})$, integrating over $D$ the obtained equation and using the definition of the matrix $W(\cdot)$, we get 
\begin{equation}\label{intu1=intw+err2}
\int_{D} u_{1}(x) dx \, = \int_{D} W(x) \cdot \left[ u_{0}(x)  + Err_{0}(x) + Err_{\Gamma}(x) \right] \, dx = \int_{D} W(x) \, dx \cdot  u_{0}(z) + Err_{1},
\end{equation}
where 
\begin{equation*}
Err_{1} := \int_{D} W(x) \cdot \left[ \int_{0}^{1} \nabla u_{0}(z+t(x-z)) \cdot (x-z) dt  + Err_{0}(x) + Err_{\Gamma}(x) \right] \, dx.
\end{equation*}
Next, we estimate $Err_{1}$. For this, we split it as
\begin{equation}\label{BG}
Err_{1} := S_{1} + S_{2} + S_{3},
\end{equation}
then, we define each term and estimate it. More precisely, we have:
\begin{enumerate}
\item Estimation of: 
\begin{eqnarray}\label{TermS1}
\nonumber
S_{1} &:=& \int_{D} W(x) \cdot  \int_{0}^{1} \nabla u_{0}(z+t(x-z)) \cdot (x-z) \, dt \, dx \\ 
\left\vert S_{1} \right\vert & \leq & \left\Vert W \right\Vert_{\mathbb{L}^{2}(D)} \; \left\Vert \int_{0}^{1} \nabla u_{0}(z+t(\cdot -z)) \cdot (\cdot -z) \, dt \right\Vert_{\mathbb{L}^{2}(D)} = \mathcal{O}\left(a^{\frac{5}{2}} \, \left\Vert W \right\Vert_{\mathbb{L}^{2}(D)} \right).
\end{eqnarray}
\item Estimation of: 
\begin{eqnarray*}
S_{2} &:=&  \int_{D} W(x) \cdot Err_{0}(x) \, dx \\
S_{2} & \overset{(\ref{WN})}{=} &  \int_{D} W(x) \cdot \nabla M \left( u_{1}(\cdot) \, \int_{0}^{1} \, \nabla \, \eta(z+t(\cdot -z)) \cdot (\cdot - z) \, dt \right)(x) \, dx.
\end{eqnarray*}
We apply the Cauchy-Schwartz inequality to obtain: 
\begin{eqnarray*}
\left\vert S_{2} \right\vert & \lesssim & \left\Vert W \right\Vert_{\mathbb{L}^{2}(D)} \; \left\Vert \nabla M \left( u_{1}(\cdot) \, \int_{0}^{1} \, \nabla \, \eta(z+t(\cdot -z)) \cdot (\cdot - z) \, dt \right) \right\Vert_{\mathbb{L}^{2}(D)} \\
 & \overset{(\ref{NormMagnetization})}{\leq} & \left\Vert W \right\Vert_{\mathbb{L}^{2}(D)} \; \left\Vert  u_{1}(\cdot) \, \int_{0}^{1} \, \nabla \, \eta(z+t(\cdot -z)) \cdot (\cdot - z) \, dt  \right\Vert_{\mathbb{L}^{2}(D)}.
\end{eqnarray*}
Then, 
\begin{equation}\label{AILFB}
S_{2} =  \mathcal{O}\left(\left\Vert W \right\Vert_{\mathbb{L}^{2}(D)} \; a \; \left\Vert u_{1} \right\Vert_{\mathbb{L}^{2}(D)} \right).
\end{equation}
\item Estimation of: 
\begin{eqnarray}
\nonumber
S_{3} &:=& \int_{D} W(x) \cdot Err_{\Gamma}(x) \, dx \overset{(\ref{Err-Gamma})}{\simeq}  \int_{D} W(x) \cdot \int_{D} \Gamma(x,y) \cdot u_{1}(y) \, (\epsilon_{0}(y) - \epsilon_{p}) \, dy \, dx.
\end{eqnarray}
With the help of $(\ref{AY})$ we rewrite the previous formula as 
\begin{eqnarray*}
S_{3} & \simeq & \int_{D} W(x) \cdot \int_{D}  \underset{x}{\nabla} \left[   \underset{x}{\nabla} M\left(\Phi_{0}(\cdot ,y)  \nabla \epsilon_{0}(y) \right) \right](x) \cdot u_{1}(y) \,\frac{(\epsilon_{0}(y) - \epsilon_{p})}{\epsilon^{2}_{0}(y)}  \, dy \, dx \\ &+& \int_{D} W(x) \cdot \int_{D} W_{4}(x,y) \cdot u_{1}(y) \, (\epsilon_{0}(y) - \epsilon_{p}) \, dy \, dx. 
\end{eqnarray*}  
We split the previous formula as $S_{3} = S_{3,1} + S_{3,2}$, we define and we estimate each term. 
\begin{enumerate}
\item Estimation of:
\begin{eqnarray*}
S_{3,1} & := & \int_{D} W(x) \cdot \int_{D}  \underset{x}{\nabla} \left[   \underset{x}{\nabla} M\left(\Phi_{0}(\cdot ,y)  \nabla \epsilon_{0}(y) \right) \right](x) \cdot u_{1}(y) \,\frac{(\epsilon_{0}(y) - \epsilon_{p})}{\epsilon^{2}_{0}(y)}  \, dy \, dx \\ 
 & = &  \int_{D} W(x) \cdot \int_{D} \underset{x}{\nabla} \, \underset{x}{\nabla} \int_{D} \underset{t}{\nabla}\Phi_{0}(t,x) \cdot \nabla \epsilon_{0}(y) \Phi_{0}(t,y) \, dt  u_{1}(y)  \,\frac{(\epsilon_{0}(y) - \epsilon_{p})}{\epsilon^{2}_{0}(y)}  \, dy \, dx \\
& = & \int_{D} W(x) \cdot \underset{x}{\nabla} \, \underset{x}{\nabla} \int_{D}  \int_{D} \underset{t}{\nabla}\Phi_{0}(t,x)  \cdot \nabla \epsilon_{0}(y)  \Phi_{0}(t,y) \,  u_{1}(y)  \,\frac{(\epsilon_{0}(y) - \epsilon_{p})}{\epsilon^{2}_{0}(y)}  \, dt  \, dy \, dx  \\
& = & \int_{D} W(x) \cdot \underset{x}{\nabla} \, \underset{x}{\nabla} \int_{D}   \underset{t}{\nabla}\Phi_{0}(t,x)  \cdot \int_{D}  \Phi_{0}(t,y) \, \nabla \epsilon_{0}(y)   \otimes  u_{1}(y)  \,\frac{(\epsilon_{0}(y) - \epsilon_{p})}{\epsilon^{2}_{0}(y)} \, dy\, dt \, dx.
\end{eqnarray*}
For shortness, for every $y \in D$, we set 
\begin{equation}\label{defU1star}
u^{\star}_{1}(y) := \nabla \epsilon_{0}(y)   \otimes  u_{1}(y)  \,\frac{(\epsilon_{0}(y) - \epsilon_{p})}{\epsilon^{2}_{0}(y)} \, \in \mathbb{L}^{2}(D)  , 
\end{equation}
hence, 
\begin{eqnarray*}
S_{3,1} & = & \int_{D} W(x) \cdot \underset{x}{\nabla} \, \underset{x}{\nabla} \int_{D}   \underset{t}{\nabla}\Phi_{0}(t,x)  \cdot \int_{D}  \Phi_{0}(t,y) \, u^{\star}_{1}(y)   \, dy\, dt \, dx\\
& = & \int_{D} W(x) \cdot \underset{x}{\nabla} \, \underset{x}{\nabla} \int_{D}   \underset{t}{\nabla}\Phi_{0}(t,x)  \cdot \, N\left( u^{\star}_{1}\right)(t)  \, dt \, dx\\
& = & \int_{D} W(x) \cdot \underset{x}{\nabla} \, \underset{x}{\nabla} M\left( u^{\star}_{1}\right)(x)   \, dx.
\end{eqnarray*}
By an integration by parts, we rewrite the last formula as: 
\begin{equation*}\label{LMTT}
S_{3,1} = - \int_{D} W(x) \cdot  \nabla \, \nabla N\left( \div N \left( u^{\star}_{1} \right) \right)(x) \, dx + \int_{D} W(x) \cdot \nabla \, \nabla SL\left(\nu \cdot N \left( u^{\star}_{1} \right) \right)(x) \, dx.
\end{equation*}
Again, we split $S_{3,1}$ as $S_{3,1} = S_{3,1,1} + S_{3,1,2}$, we define each term and we estimate it.\\ As $u^{\star}_{1} \in \mathbb{L}^{2}(D)$ the function $\nabla \, \nabla N\left( \div N\left(u^{\star}_{1}\right)\right) \in  \mathbb{L}^{2}(D)$. This implies,  
\begin{equation*}
\left\vert S_{3,1,1} \right\vert  :=  \left\vert \int_{D} W(x) \cdot \nabla \, \nabla N\left( \div N\left(u^{\star}_{1}\right)\right)(x)  \, dx \right\vert  \leq  \left\Vert W \right\Vert_{\mathbb{L}^{2}(D)} \; \left\Vert \nabla \, \nabla N\left( \div N\left(u^{\star}_{1}\right)\right) \right\Vert_{\mathbb{L}^{2}(D)},
\end{equation*}
and, from Calderon-Zygmund inequality, see \cite{gilbarg2001elliptic}, page 242, we reduce the previous inequality to 
\begin{eqnarray*}
\left\vert S_{3,1,1} \right\vert & \leq & \left\Vert W \right\Vert_{\mathbb{L}^{2}(D)} \; \left\Vert  \div N\left(u^{\star}_{1}\right) \right\Vert_{\mathbb{L}^{2}(D)}.
\end{eqnarray*}
Just as divergence operator $ \div $ and gradient operator $\nabla$ are both differential operator of order one, we use $(\ref{NormgradN})$ to finish the estimation of $S_{3,1,1}$. We have,  
\begin{equation}\label{AILFBS2}
S_{3,1,1}  = \mathcal{O}\left( a \, \left\Vert u^{\star}_{1} \right\Vert_{\mathbb{L}^{2}(D)} \,  \, \left\Vert W \right\Vert_{\mathbb{L}^{2}(D)} \right).
\end{equation}
Also, we have, 
\begin{equation*}
\left\vert S_{3,1,2} \right\vert := \left\vert \int_{D} W(x) \cdot \nabla \, \nabla SL\left(\nu \cdot N \left( u^{\star}_{1} \right) \right)(x) \, dx \right\vert  \leq  \left\Vert  W \right\Vert_{\mathbb{L}^{2}(D)} \, \left\Vert  \nabla \, \nabla SL\left(\nu \cdot N \left( u^{\star}_{1} \right) \right) \right\Vert_{\mathbb{L}^{2}(D)},
\end{equation*}
and, after scaling, we obtain:  
\begin{equation*}
\left\vert S_{3,1,2} \right\vert  \leq  \left\Vert  W \right\Vert_{\mathbb{L}^{2}(D)} \, a^{\frac{3}{2}} \, a^{2} \,\left\Vert  \nabla \, \nabla SL\left(\nu \cdot N \left( \widetilde{u^{\star}_{1}} \right) \right) \right\Vert_{\mathbb{L}^{2}(B)}.
\end{equation*}
Using the continuity of the operator $\nabla \, \nabla SL: \mathbb{H}^{\frac{1}{2}}(\partial B) \rightarrow \mathbb{L}^{2}(B)$, see for instance \cite{McLean}, corollary 6.14, page 210, we reduce the previous equation to 
\begin{equation*}
\left\vert S_{3,1,2} \right\vert  \lesssim  \left\Vert  W \right\Vert_{\mathbb{L}^{2}(D)} \, a^{\frac{3}{2}} \, a^{2} \,\left\Vert  \nu \cdot N \left( \widetilde{u^{\star}_{1}}  \right) \right\Vert_{\mathbb{H}^{1/2}(\partial B)}.
\end{equation*}
Now, from the continuity of the trace operator we deduce that 
\begin{equation*}
\left\vert S_{3,1,2} \right\vert  \lesssim  \left\Vert  W \right\Vert_{\mathbb{L}^{2}(D)} \, a^{\frac{3}{2}} \, a^{2} \,\left\Vert  N \left( \widetilde{u^{\star}_{1}}  \right) \right\Vert_{\mathbb{H}^{1}(B)}.
\end{equation*}
In addition, using the continuity of the Newtonian operator and scaling back to end up with 
\begin{equation}\label{SNQ}
 S_{3,1,2} = \mathcal{O}\left( \left\Vert  W \right\Vert_{\mathbb{L}^{2}(D)} \, a^{2} \,\left\Vert  u^{\star}_{1}  \right\Vert_{\mathbb{L}^{2}(D)}\right).
\end{equation}
By gathering $(\ref{AILFBS2})$ with $(\ref{SNQ})$ we deduce that:
\begin{equation*}
 S_{3,1} = \mathcal{O}\left( \left\Vert  W \right\Vert_{\mathbb{L}^{2}(D)} \, a \,\left\Vert  u^{\star}_{1}  \right\Vert_{\mathbb{L}^{2}(D)}\right).
\end{equation*}
Viewing the definition of $u^{\star}_{1}(\cdot)$, see for instance $(\ref{defU1star})$, and using the smoothness of $ \dfrac{\eta(\cdot)}{\epsilon_{0}(\cdot)} \, \nabla \varepsilon (\cdot)$ we deduce that 
\begin{equation}\label{SNQS31}
 S_{3,1} = \mathcal{O}\left( \left\Vert  W \right\Vert_{\mathbb{L}^{2}(D)} \, a \,\left\Vert  u_{1}  \right\Vert_{\mathbb{L}^{2}(D)}\right).
\end{equation}
\item Estimation of:
\begin{equation*}
S_{3,2} := \int_{D} W(x) \cdot \int_{D} W_{4}(x,y) \cdot u_{1}(y) \, (\epsilon_{0}(y) - \epsilon_{p}) \, dy \, dx. 
\end{equation*} 
Then, by Holder inequality, we obtain: 
\begin{eqnarray*}
\left\vert S_{3,2} \right\vert & \leq & \left\Vert W \right\Vert_{\mathbb{L}^{2}(D)} \, \left\Vert \int_{D} W_{4}(\cdot ,y) \cdot u_{1}(y) \, (\epsilon_{0}(y) - \epsilon_{p}) \, dy \right\Vert_{\mathbb{L}^{2}(D)} \\
& \leq & \left\Vert W \right\Vert_{\mathbb{L}^{2}(D)} \, \left\Vert  u_{1} \right\Vert_{\mathbb{L}^{2}(D)} \, \left[\int_{D} \, \int_{D} \left\vert W_{4}(x ,y)  \, (\epsilon_{0}(y) - \epsilon_{p}) \right\vert^{2} dy \, dx \right]^{\frac{1}{2}} \\
& \leq & \left\Vert W \right\Vert_{\mathbb{L}^{2}(D)} \, \left\Vert  u_{1} \right\Vert_{\mathbb{L}^{2}(D)} \, \underset{D}{Sup}\left\vert \epsilon_{0}(\cdot) - \epsilon_{p} \right\vert  \, \left[\int_{D} \, \int_{D} \left\vert W_{4}(x ,y)  \,  \right\vert^{2} dy \, dx \right]^{\frac{1}{2}}.
\end{eqnarray*}
We know that $\left( \epsilon_{0}(\cdot)-\epsilon_{p} \right) \sim 1$, with respect to the size $a$, we deduce that 
\begin{equation}\label{MLePen}
\left\vert S_{3,2} \right\vert  \lesssim  \left\Vert W \right\Vert_{\mathbb{L}^{2}(D)} \, \left\Vert  u_{1} \right\Vert_{\mathbb{L}^{2}(D)}  \, \left[\int_{D}  \int_{D} \left\vert W_{4}(x ,y) \right\vert^{2} dy  dx \right]^{\frac{1}{2}}.
\end{equation}
Now, we recall from $(\ref{AY})$ that $W_{4}(\cdot,y) \in \mathbb{L}^{\frac{3(3-2\delta)}{(3+2\delta)}}(D)$ and, then, we approximate it's singularity as 
\begin{equation*}
W_{4}(\cdot,y) \simeq \left\vert \cdot - y \right\vert^{-\frac{(3+2\delta)(3-\delta)}{3(3-2\delta)}}
\end{equation*}
and, then, by scaling the double integral in $(\ref{MLePen})$ we get 
\begin{equation}\label{MLePenI}
S_{3,2} = \mathcal{O}\left(  \left\Vert W \right\Vert_{\mathbb{L}^{2}(D)} \, \left\Vert  u_{1} \right\Vert_{\mathbb{L}^{2}(D)}  \, \, a^{\frac{2\delta^{2}-21\delta + 18}{3(3-\delta)}} \right).
\end{equation}
\end{enumerate}
Finally, by summing $(\ref{SNQS31})$ and $(\ref{MLePenI})$ we get: 
\begin{equation}\label{MQUSMA}
S_{3} = \mathcal{O}\left(  \left\Vert W \right\Vert_{\mathbb{L}^{2}(D)} \, \left\Vert  u_{1} \right\Vert_{\mathbb{L}^{2}(D)} \, a \right).
\end{equation}
Using $(\ref{TermS1}), (\ref{AILFB}), (\ref{MQUSMA})$ and the definition of $Err_{1}$, see $(\ref{BG})$, we get
\begin{equation*}
Err_{1} = \mathcal{O}\left( a^{\frac{5}{2}} \, \left\Vert W \right\Vert_{\mathbb{L}^{2}(D)}  + a \, \left\Vert u_{1} \right\Vert_{\mathbb{L}^{2}(D)} \; \left\Vert W \right\Vert_{\mathbb{L}^{2}(D)} \right).
\end{equation*} 
\end{enumerate}
Going back to $(\ref{intu1=intw+err2})$ and plugging the expression of $Err_{1}$ to obtain
\begin{equation}\label{Intu1=IntWu0+Err}
\int_{D} u_{1}(x) dx = \int_{D} W(x) \, dx \cdot  u_{0}(z) + \mathcal{O}\left( a^{\frac{5}{2}} \, \left\Vert W \right\Vert_{\mathbb{L}^{2}(D)}  + a \, \left\Vert u_{1} \right\Vert_{\mathbb{L}^{2}(D)} \; \left\Vert W \right\Vert_{\mathbb{L}^{2}(D)} \right).
\end{equation}
The next proposition show the estimates of the terms appearing in the right hand-side.  
\begin{proposition}\label{Proposition2.1}
We  have:
\begin{equation}\label{aprioriestimateregimemoderate}
\left\Vert u_{1} \right\Vert_{\mathbb{L}^{2}(D)} \leq a^{-h} \;\left\Vert u_{0} \right\Vert_{\mathbb{L}^{2}(D)}, \; \; \left\Vert W \right\Vert_{\mathbb{L}^{2}(D)} = \mathcal{O}\left( a^{\frac{3}{2}-h} \right)
\end{equation}
and
\begin{equation*}\label{IntWD2.32}
\int_{D} W(x) \, dx = \frac{a^{3} \, \epsilon_{0}(z)}{\left( \epsilon_{0}(z) - \left(\epsilon_{0}(z) - \epsilon_{p} \right) \,   \lambda^{(3)}_{n_{0}} \right)} \, \int_{B} e^{(3)}_{n_{0}}(x) \, dx \, \otimes \, \int_{B} e^{(3)}_{n_{0}}(x) \, dx + \mathcal{O}\left( a^{3} \right).
\end{equation*}
\end{proposition}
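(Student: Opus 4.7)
The plan is to exploit the invariance of the Magnetization operator $\nabla M$ under the dilation $x \mapsto z + a\xi$ together with the spectral decomposition on the reference ball $B$. A direct change of variables shows $\nabla M_D(W)(z + a\xi) = \nabla M_B(\tilde{W})(\xi)$ for $\tilde{W}(\xi) := W(z + a\xi)$, so $\tilde{W}$ solves the $a$-independent reference equation
\begin{equation*}
\tilde{W}(\xi) - \overline{\eta(z)}\, \nabla M_B(\tilde{W})(\xi) = \bm{I}, \quad \xi \in B,
\end{equation*}
column by column, the $j$-th source being the constant vector field $e_j$. Since $e_j = \nabla(x \cdot e_j)$ with $x\cdot e_j$ harmonic, the source lies in $\nabla\mathcal{H}armonic(B)$; since $\nabla M$ annihilates $\mathbb{H}_{0}(\div = 0)$ (integration by parts kills both the volume and the boundary contributions) and preserves $\nabla\mathcal{H}armonic(B)$, the solution $\tilde{W}_j$ stays in $\nabla\mathcal{H}armonic(B)$ and expands in the eigenbasis as
\begin{equation*}
\tilde{W}_j = \sum_{n \geq 1} \frac{\alpha_{j,n}}{1 - \overline{\eta(z)}\,\lambda^{(3)}_n}\, e^{(3)}_n, \qquad \alpha_{j,n} := e_j \cdot \int_B e^{(3)}_n(\xi)\, d\xi.
\end{equation*}

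The denominator factors as $1 - \eta(z)\,\lambda^{(3)}_n = f_n(\omega,\gamma)/\epsilon_0(z)$. Under the resonance scaling (\ref{freq-close-plasm}), a first-order Taylor expansion of $f_{n_0}$ near $(\omega_{n_0},\gamma_{n_0})$ gives $|f_{n_0}(\omega,\gamma)| \sim a^h$, whereas the monotonicity and isolation captured by Lemma \ref{LemmaClaim} imply that $|f_n(\omega,\gamma)|$ remains uniformly bounded below for $n \neq n_0$. Splitting the series into its $n_0$-term and the uniformly $\mathcal{O}(1)$ remainder in $\mathbb{L}^2(B)$ yields $\|\tilde{W}\|_{\mathbb{L}^2(B)} = \mathcal{O}(a^{-h})$, which after the scaling identity $\|W\|_{\mathbb{L}^2(D)}^2 = a^3\,\|\tilde{W}\|_{\mathbb{L}^2(B)}^2$ gives the second bound of the proposition. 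For the third claim, integrating the expansion termwise over $B$ and using $\alpha_{j,n_0} = \bigl(\int_B e^{(3)}_{n_0}\bigr)_j$, the dominant contribution is
\begin{equation*}
a^3 \int_B \tilde{W}(\xi)\, d\xi = \frac{a^3\, \epsilon_0(z)}{f_{n_0}(\omega,\gamma)} \left(\int_B e^{(3)}_{n_0}\right) \otimes \left(\int_B e^{(3)}_{n_0}\right) + \mathcal{O}(a^3),
\end{equation*}
which is exactly the stated asymptotic after recognizing $f_{n_0}(\omega,\gamma) = \epsilon_0(z) - (\epsilon_0(z)-\epsilon_p)\lambda^{(3)}_{n_0}$.

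For the a priori bound $\|u_1\|_{\mathbb{L}^2(D)} \leq a^{-h}\|u_0\|_{\mathbb{L}^2(D)}$, I would apply the resolvent $[I - \eta(z)\nabla M]^{-1}$ to equation (\ref{sppo}). The same spectral analysis produces the operator-norm estimate $\|[I - \eta(z)\nabla M]^{-1}\|_{\mathbb{L}^2 \to \mathbb{L}^2} = \mathcal{O}(a^{-h})$: the resolvent is the identity on $\mathbb{H}_{0}(\div = 0)$, bounded uniformly on $\mathbb{H}_{0}(Curl = 0)$, and blows up at the single rate $a^{-h}$ through the mode $n=n_0$ on $\nabla\mathcal{H}armonic(B)$. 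The error terms $Err_0$ and $Err_\Gamma$ in (\ref{sppo}) are linear in $u_1$ and, thanks to the smoothness of $\eta$ together with the $\mathbb{L}^{3/2-}$ singularity analysis of $\Gamma$ from Theorem \ref{DH}, satisfy $\|Err_0\|_{\mathbb{L}^2(D)} + \|Err_\Gamma\|_{\mathbb{L}^2(D)} \lesssim a\,\|u_1\|_{\mathbb{L}^2(D)}$. Combining these with the resolvent bound produces the self-improving inequality $\|u_1\|_{\mathbb{L}^2(D)} \leq C a^{-h}\bigl(\|u_0\|_{\mathbb{L}^2(D)} + a\,\|u_1\|_{\mathbb{L}^2(D)}\bigr)$, and since $h < 1$ the correction $C a^{1-h}\|u_1\|_{\mathbb{L}^2(D)}$ can be absorbed for $a$ small.

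The main obstacle is the resolvent estimate itself. One must rigorously confirm that $|f_{n_0}|$ is the unique small denominator in the working rectangle, which requires combining the discreteness of the spectrum $\{\lambda^{(3)}_n\}$ with the strict monotonicity (\ref{monotonicity-introduction}) to separate the plasmonic frequencies, and one must verify that decomposing an arbitrary $\mathbb{L}^2(D)$ source across the orthogonal Helmholtz triple $\mathbb{H}_{0}(\div = 0) \oplus \mathbb{H}_{0}(Curl = 0) \oplus \nabla\mathcal{H}armonic(B)$ introduces no further near-resonant contribution. Once this spectral bookkeeping is secured, the rest of the argument reduces to scaling and Cauchy–Schwarz.
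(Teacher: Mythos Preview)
Your proposal is correct and follows essentially the same route as the paper. The only difference is presentational: where you invoke the global resolvent bound $\|[I-\eta(z)\nabla M]^{-1}\|_{\mathbb{L}^2\to\mathbb{L}^2}=\mathcal{O}(a^{-h})$ together with the global error estimate $\|Err_0\|+\|Err_\Gamma\|\lesssim a\|u_1\|$ and then absorb, the paper projects equation $(\ref{sppo})$ onto each of the three subspaces separately (Sections~\ref{Section4} and~\ref{ADZ}), estimates $\langle\widetilde{Err}_\Gamma,e^{(j)}_n\rangle$ subspace by subspace, and sums to obtain the same self-improving inequality; your treatment of $W$ via the scale-invariance of $\nabla M$ and the observation $\bm{I}\in\nabla\mathcal{H}armonic$ is exactly the paper's argument in Section~\ref{ADZ}.
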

\begin{proof}
See Subsection \ref{Section4} and Subsection \ref{ADZ}. 
\end{proof}
Thanks to Proposition $\ref{Proposition2.1}$, the equation $(\ref{Intu1=IntWu0+Err})$ takes the following form: 
\begin{equation*}\label{BRTVMM}
 \int_{D} u_{1}(x) dx = \frac{a^{3} \; \epsilon_{0}(z) \, \langle u_{0}(z) ; \int_{B} e^{(3)}_{n_{0}}(x) \, dx \rangle}{\left(\epsilon_{0}(z) -  \left(\epsilon_{0}(z) - \epsilon_{p} \right) \,   \lambda^{(3)}_{n_{0}} \right)} \, \int_{B} e^{(3)}_{n_{0}}(x) \, dx  + \mathcal{O}\left(a^{\min(3, 4-2h)} \right).
\end{equation*}
Consequently, 
\begin{equation*}\label{ZAKD}
\left\vert \int_{D} u_{1}(x) dx \right\vert^{2} = \frac{a^{6} \; \left\vert \epsilon_{0}(z) \right\vert^{2} \,\left\vert \langle u_{0}(z) ; \int_{B} e^{(3)}_{n_{0}}(x) \, dx \rangle \right\vert^{2}}{\left\vert \epsilon_{0}(z) - \left(\epsilon_{0}(z) - \epsilon_{p} \right) \,   \lambda^{(3)}_{n_{0}} \right\vert^{2}} \, \left\vert \int_{B} e^{(3)}_{n_{0}}(x) \, dx \right\vert^{2} + \mathcal{O}\left(a^{\min(6-h, 7-3h)} \right).
\end{equation*}
\newline
In the sequel, we derive a relation between the given data  $\left\Vert u_{1} \right\Vert_{\mathbb{L}^{2}(D)}$ and $\left\vert \int_{D} u_{1}(x) \, dx \right\vert$. For this, we recall from  $(\ref{sppo})$ that, in the domain $D$, we have
\begin{equation*}
u_{1} =  \left[ I - \eta(z)  \,   \nabla M \right]^{-1}\left[ u_{0} + Err_{0} + Err_{\Gamma} \right], 
\end{equation*}
and after scaling to the domain $B$ we obtain 
\begin{equation*}
\tilde{u}_{1} =  \left[ I - \eta(z) \,   \nabla M \right]^{-1}\left[ \tilde{u}_{0}  + \widetilde{Err}_{0} + \widetilde{Err}_{\Gamma} \right]. 
\end{equation*}
Recalling the decomposition of $\mathbb{L}^{2}(B)$, see $(\ref{L2-decomposition})$,
\begin{equation*}
\mathbb{L}^{2}(B)  = \mathbb{H}_{0}\left(\div=0 \right) \overset{\perp}{\oplus} \mathbb{H}_{0}\left(Curl=0 \right) \overset{\perp}{\oplus} \nabla \mathcal{H}armonic,
\end{equation*}
we project the previous equation into three subspaces. 
\begin{enumerate}
\item Taking the inner product with respect to $e^{(1)}_{n}(\cdot)$: 
\begin{equation}\label{HAS}
\langle \tilde{u}_{1}; e^{(1)}_{n} \rangle = \langle e^{(1)}_{n}; \left[ I - \eta(z) \,\nabla M \right]^{-1}\left[ \tilde{u}_{0} + \widetilde{Err}_{0} + \widetilde{Err}_{\Gamma} \right] \rangle, 
\end{equation}
using the self-adjointness of $\nabla M (\cdot)$ and the fact that  $\nabla M \left( e^{(1)}_{n} \right) = 0$, see  Lemma \ref{MagnetizationOperator}, we deduce that $\left[ I - \eta(z) \,\nabla M \right]^{-1} \left( e^{(1)}_{n} \right) = e^{(1)}_{n}
$
and we reduce the equation $(\ref{HAS})$ to: 
\begin{equation*}
\langle \tilde{u}_{1}; e^{(1)}_{n} \rangle = \langle e^{(1)}_{n}; \left[ \tilde{u}_{0} + \widetilde{Err}_{0} + \widetilde{Err}_{\Gamma} \right] \rangle = \langle e^{(1)}_{n};  \tilde{u}_{0} \rangle + Err_{3,n}, 
\end{equation*}
where $Err_{3,n}$ is the term given by: 
\begin{eqnarray*}
Err_{3,n} &:=& \langle e^{(1)}_{n}; \widetilde{Err}_{0}  \rangle + \langle e^{(1)}_{n};  \widetilde{Err}_{\Gamma} \rangle \\
& \overset{(\ref{WN})}{=} & - \omega^{2} \, \mu \, a  \langle e^{(1)}_{n}; \nabla M \left(\tilde{u}_{1}(\cdot) \int_{0}^{1} \widetilde{\nabla \eta}(z+ta \cdot)  \cdot (\cdot) dt \right) \rangle + \langle e^{(1)}_{n};  \widetilde{Err}_{\Gamma} \rangle.
\end{eqnarray*}
We use the fact that $\nabla M \left(e^{(1)}_{n} \right) = 0$ to reduce the previous expression to $Err_{3,n} =  \langle e^{(1)}_{n}; \widetilde{Err}_{\Gamma} \rangle.$
Hence, 
\begin{equation*}
\langle e^{(1)}_{n}; \tilde{u}_{1} \rangle = \langle e^{(1)}_{n}; \tilde{u}_{0} \rangle + \langle e^{(1)}_{n}; \widetilde{Err}_{\Gamma} \rangle.
\end{equation*}
By taking the square modulus of the previous equality and then the series with respect to $n$, we obtain: 
\begin{eqnarray}\label{11MM1}
\nonumber
\sum_{n} \, \left\vert \langle \tilde{u}_{1}; e^{(1)}_{n} \rangle \right\vert^{2} & = & \sum_{n} \left\vert \langle \tilde{u}_{0}; e^{(1)}_{n} \rangle \right\vert^{2} + \sum_{n} \left\vert \langle \widetilde{Err}_{\Gamma}; e^{(1)}_{n} \rangle \right\vert^{2} \\ \nonumber &+&  \mathcal{O}\left( \left(\sum_{n} \left\vert \langle \tilde{u}_{0}; e^{(1)}_{n} \rangle \right\vert^{2} \right)^{\frac{1}{2}} \left(\sum_{n} \left\vert \langle \widetilde{Err}_{\Gamma}; e^{(1)}_{n} \rangle \right\vert^{2} \right)^{\frac{1}{2}} \right),  
\end{eqnarray}
and, using $(\ref{Norm-P12U0}), (\ref{EstimationErrGamma1})$ and $(\ref{aprioriestimateregimemoderate})$, we get: 
\begin{equation}\label{11MM1}
\sum_{n} \, \left\vert \langle \tilde{u}_{1}; e^{(1)}_{n} \rangle \right\vert^{2}  =  \sum_{n} \left\vert \langle \tilde{u}_{0}; e^{(1)}_{n} \rangle \right\vert^{2} + \mathcal{O}\left( a^{\frac{(9 - 7 \delta -2 \delta^{2})}{(3 - 2 \delta)} - h} \right) = \mathcal{O}\left( a^{\min\left( 2 ;\frac{(9 - 7 \delta -2 \delta^{2})}{(3 - 2 \delta)} - h \right)} \right).  
\end{equation}
\item Taking the inner product with respect to $e^{(2)}_{n}(\cdot)$.
\begin{equation*}
\langle \tilde{u}_{1}; e^{(2)}_{n} \rangle = \langle e^{(2)}_{n}; \left[ I - \eta(z) \,   \nabla M \right]^{-1}\left[ \tilde{u}_{0} + \widetilde{Err}_{0} + \widetilde{Err}_{\Gamma} \right] \rangle, 
\end{equation*}
since $\nabla M \left( e^{(2)}_{n} \right) = e^{(2)}_{n}$, see Lemma $(\ref{MagnetizationOperator})$, then after taking the adjoint operator of $\left[ I - \eta(z)   \,   \nabla M \right]$ the previous equation will be reduced to 
\begin{eqnarray}\label{equa2.40LZ}
\langle \tilde{u}_{1}; e^{(2)}_{n} \rangle = \frac{\overline{\epsilon_{0}(z)} \, \left[ \langle e^{(2)}_{n};  \tilde{u}_{0} \rangle + \langle e^{(2)}_{n}; \widetilde{Err}_{0}\rangle + \langle e^{(2)}_{n}; \widetilde{Err}_{\Gamma}\rangle \right]}{\overline{\epsilon}_{p}} = \frac{\overline{\epsilon_{0}(z)} \,  \langle e^{(2)}_{n};  \tilde{u}_{0} \rangle}{ \overline{\epsilon}_{p}} + Err_{4,n},
\end{eqnarray}
where, obviously, the term $Err_{4,n}$ is given by
\begin{equation*}
Err_{4,n} := \frac{\overline{\epsilon_{0}(z)} \, \left[ \langle e^{(2)}_{n}; \widetilde{Err}_{0}\rangle + \langle e^{(2)}_{n}; \widetilde{Err}_{\Gamma}\rangle \right]}{\overline{\epsilon}_{p} }. 
\end{equation*}
Now, as $\dfrac{\epsilon_{0}(z)}{\epsilon_{p}} \sim 1$, with respect to the size $a$, we approximate $Err_{4,n}$ by: 
\begin{equation*}
Err_{4,n} \simeq \langle e^{(2)}_{n}; \widetilde{Err}_{0}\rangle + \langle e^{(2)}_{n}; \widetilde{Err}_{\Gamma}\rangle. 
\end{equation*}
Using the definition of $Err_{0}$, see for instance $(\ref{WN})$, and the fact that $\nabla M \left( e^{(2)}_{n} \right) = e^{(2)}_{n}$ to get: 
\begin{equation*}
Err_{4,n} \simeq a\, \langle e^{(2)}_{n}; \tilde{u}_{1}(\cdot) \, \int_{0}^{1} \widetilde{\nabla \, \eta}(z+t a \, \cdot) \cdot (\cdot) \, dt \rangle + \langle e^{(2)}_{n}; \widetilde{Err}_{\Gamma}\rangle. 
\end{equation*}
Consequently, 
\begin{equation*}
\sum_{n} \left\vert Err_{4,n} \right\vert^{2}  \lesssim  a^{2} \; \left\Vert \tilde{u}_{1} \right\Vert^{2}_{\mathbb{L}^{2}(B)}  + \sum_{n} \left\vert \langle e^{(2)}_{n}; \widetilde{Err}_{\Gamma} \rangle \right\vert^{2} \overset{(\ref{RDSN})}{=} \mathcal{O}\left( a^{2} \; \left\Vert \tilde{u}_{1} \right\Vert^{2}_{\mathbb{L}^{2}(B)} \right),
\end{equation*}
and, using the a priori estimation $(\ref{aprioriestimateregimemoderate})$, we obtain: 
\begin{equation}\label{RDSNKAS}
\sum_{n} \left\vert Err_{4,n} \right\vert^{2} =  \mathcal{O}\left(a^{2-2h}\right).
\end{equation}
Hence, using $(\ref{RDSNKAS})$ in $(\ref{equa2.40LZ})$ we obtain:  
\begin{equation}\label{MM2}
\sum_{n} \, \left\vert \langle \tilde{u}_{1}; e^{(2)}_{n} \rangle \right\vert^{2} = \frac{\left\vert \epsilon_{0}(z)  \right\vert^{2}}{\left\vert  \epsilon_{p}  \right\vert^{2}} \underset{n}{\sum} \left\vert \langle \tilde{u}_{0}; e^{(2)}_{n} \rangle \right\vert^{2}  + \mathcal{O}\left( a^{2-2h} \right) \overset{(\ref{Norm-P12U0})}{=} \mathcal{O}\left( a^{2-2h} \right).
\end{equation}

\item Taking the inner product with respect to $e^{(3)}_{n}(\cdot)$, we get:
\begin{equation*}
\langle \tilde{u}_{1}; e^{(3)}_{n} \rangle = \langle e^{(3)}_{n}; \left[ I - \eta(z) \nabla M \right]^{-1}\left[ \tilde{u}_{0} + \widetilde{Err}_{0} + \widetilde{Err}_{\Gamma} \right] \rangle. 
\end{equation*}
Since $\nabla M \left( e^{(3)}_{n} \right) = \lambda^{(3)}_{n} \, e^{(3)}_{n}$, see $(\ref{b3})$, and the definition of the function $\eta(\cdot)$, see $(\ref{Defeta})$, the previous equation will be reduced to: 
\begin{eqnarray}\label{equa2.43AS}
\langle \tilde{u}_{1}; e^{(3)}_{n} \rangle &=& \frac{\overline{\epsilon_{0}(z)} \, \langle e^{(3)}_{n}; \tilde{u}_{0} \rangle}{\left( \overline{\epsilon_{0}(z)} - \left( \overline{ \epsilon_{0}(z)} - \overline{\epsilon}_{p} \right) \, \lambda^{(3)}_{n} \right)} + Err_{5,n}, 
\end{eqnarray}
where $Err_{5,n}$ is the term given by
\begin{eqnarray*}
Err_{5,n} &:=& \frac{\overline{\epsilon_{0}(z)}}{\left( \overline{\epsilon_{0}(z)} - \left( \overline{ \epsilon_{0}(z)} - \overline{\epsilon}_{p} \right) \, \lambda^{(3)}_{n} \right)} \left[ \langle e^{(3)}_{n}; \widetilde{Err}_{0} \rangle + \langle e^{(3)}_{n}; \widetilde{Err}_{\Gamma} \rangle \right] \\
Err_{5,n} & \overset{(\ref{WN})}{\simeq} & \frac{\langle e^{(3)}_{n}; \widetilde{Err}_{\Gamma} \rangle + a \,  \langle e^{(3)}_{n};   
\tilde{u}_{1}(\cdot) \, \int_{0}^{1} \widetilde{\nabla \,\eta}(z+ta\cdot)) \cdot (\cdot) \, dt  \, dy  \rangle}{\left( \overline{\epsilon_{0}(z)} - \left( \overline{ \epsilon_{0}(z)} - \overline{\epsilon}_{p} \right) \, \lambda^{(3)}_{n} \right)}.
\end{eqnarray*}
Taking the squared modulus and the series with respect to $n$, we obtain  
\begin{equation*}
\sum_{n} \left\vert Err_{5,n} \right\vert^{2}  \lesssim  \sum_{n} \frac{\left\vert \langle e^{(3)}_{n}; \widetilde{Err}_{\Gamma} \rangle \right\vert^{2}+a^{2} \, \left\vert \langle e^{(3)}_{n};  \tilde{u}_{1}(\cdot) \, \int_{0}^{1} \widetilde{\nabla \eta}(z+a t \cdot ) \cdot (\cdot) dt  \rangle \right\vert^{2}}{\left\vert  \epsilon_{0}(z) - \left(  \epsilon_{0}(z) - \epsilon_{p} \right) \, \lambda^{(3)}_{n}  \right\vert^{2}}.
\end{equation*}
Recall, from $(\ref{approximationoflambdan0})$, that we have
\begin{equation}\label{cases}
\left\vert  \epsilon_{0}(z) - \left(  \epsilon_{0}(z) - \epsilon_{p} \right) \, \lambda^{(3)}_{n}  \right\vert \sim \begin{cases}
a^{h} & \text{if} \quad  n=n_{0} \\
1 & \text{if}  \quad n \neq n_{0}
\end{cases}.
\end{equation} 
Then 
\begin{equation*}
\sum_{n} \left\vert Err_{5,n} \right\vert^{2}  \lesssim   a^{-2h} \sum_{n} \left\vert \langle e^{(3)}_{n}; \widetilde{Err}_{\Gamma} \rangle \right\vert^{2}  + a^{2-2h} \; \left\Vert  \tilde{u}_{1} \right\Vert^{2}_{\mathbb{L}^{2}(B)}.
\end{equation*}
Using the relation $(\ref{TMCAA})$ and the a priori estimate given by $(\ref{aprioriestimateregimemoderate})$, we get:
\begin{equation*}\label{OF2021}
\sum_{n} \left\vert Err_{5,n} \right\vert^{2} = \mathcal{O}\left(a^{2-4h} \right).
\end{equation*}
Now, taking the squared modulus and the series with respect to $n$ in $(\ref{equa2.43AS})$, we obtain  
\begin{equation*}
\sum_{n} \left\vert \langle e^{(3)}_{n}; \tilde{u}_{1} \rangle \right\vert^{2}  =  \sum_{n} \frac{\left\vert  \epsilon_{0}(z) \right\vert^{2} \, \left\vert \langle e^{(3)}_{n}; \tilde{u}_{0} \rangle \right\vert^{2}}{\left\vert  \epsilon_{0}(z) - \left(  \epsilon_{0}(z) - \epsilon_{p} \right) \, \lambda^{(3)}_{n}  \right\vert^{2}} + \mathcal{O}\left(a^{1-3h} \right).
\end{equation*}
The relation $(\ref{cases})$ suggests us to split the series appearing on the right hand side as follows: 
\begin{equation*}
\sum_{n} \left\vert \langle e^{(3)}_{n}; \tilde{u}_{1} \rangle \right\vert^{2} =  \frac{\left\vert  \epsilon_{0}(z) \right\vert^{2} \, \left\vert \langle e^{(3)}_{n_{0}}; \tilde{u}_{0} \rangle \right\vert^{2}}{\left\vert  \epsilon_{0}(z) - \left(  \epsilon_{0}(z) - \epsilon_{p} \right) \, \lambda^{(3)}_{n_{0}}  \right\vert^{2}} + \sum_{n \neq n_{0}} \frac{\left\vert  \epsilon_{0}(z) \right\vert^{2} \, \left\vert \langle e^{(3)}_{n}; \tilde{u}_{0} \rangle \right\vert^{2}}{\left\vert  \epsilon_{0}(z) - \left(  \epsilon_{0}(z) - \epsilon_{p} \right) \, \lambda^{(3)}_{n}  \right\vert^{2}} + \mathcal{O}\left( a^{1-3h}  \right),
\end{equation*}
and, obviously, we have  
\begin{equation*}
\sum_{n \neq n_{0}} \frac{\left\vert  \epsilon_{0}(z) \right\vert^{2} \, \left\vert \langle e^{(3)}_{n}; \tilde{u}_{0} \rangle \right\vert^{2}}{\left\vert  \epsilon_{0}(z) - \left(  \epsilon_{0}(z) - \epsilon_{p} \right) \, \lambda^{(3)}_{n}  \right\vert^{2}} \sim 1. 
\end{equation*}
Then, 
\begin{equation*}
\sum_{n} \left\vert \langle e^{(3)}_{n}; \tilde{u}_{1} \rangle \right\vert^{2} = \frac{\left\vert  \epsilon_{0}(z) \right\vert^{2} \, \left\vert \langle e^{(3)}_{n_{0}}; \tilde{u}_{0} \rangle \right\vert^{2}}{\left\vert  \epsilon_{0}(z) - \left(  \epsilon_{0}(z) - \epsilon_{p} \right) \, \lambda^{(3)}_{n_{0}}  \right\vert^{2}} + \mathcal{O}\left( a^{\min(1-3h,0)}  \right).
\end{equation*}
By Taylor expansion we can prove that 
\begin{equation*}
\langle e^{(3)}_{n_{0}}; \tilde{u}_{0} \rangle = \int_{B} e^{(3)}_{n_{0}}(x) \, dx \cdot u_{0}(z) + \mathcal{O}\left( a \right), 
\end{equation*}
and, consequently, we get 
\begin{equation}\label{MM3}
\sum_{n} \left\vert \langle e^{(3)}_{n}; \tilde{u}_{1} \rangle \right\vert^{2} = \frac{\left\vert  \epsilon_{0}(z) \right\vert^{2} \, \left\vert u_{0}(z) \cdot \int_{B} e^{(3)}_{n_{0}}(x)dx \right\vert^{2}}{\left\vert  \epsilon_{0}(z) - \left(  \epsilon_{0}(z) - \epsilon_{p} \right) \, \lambda^{(3)}_{n_{0}}  \right\vert^{2}} +  \mathcal{O}\left( a^{\min(1-3h, 0)} \right). 
\end{equation}
\end{enumerate}
At present, by combining $(\ref{11MM1}), (\ref{MM2})$ and $(\ref{MM3})$, we obtain an estimation of $\left\Vert \tilde{u}_{1} \right\Vert^{2}_{\mathbb{L}^{2}(B)}$. More precisely,  
\begin{eqnarray*}
\left\Vert \tilde{u}_{1} \right\Vert^{2}_{\mathbb{L}^{2}(B)} &:=& \sum_{n} \left\vert \langle e^{(1)}_{n}; \tilde{u}_{1} \rangle \right\vert^{2} + \sum_{n} \left\vert \langle e^{(2)}_{n}; \tilde{u}_{1} \rangle \right\vert^{2} + \sum_{n} \left\vert \langle e^{(3)}_{n}; \tilde{u}_{1} \rangle \right\vert^{2} \\
 &=&\frac{\left\vert  \epsilon_{0}(z) \right\vert^{2} \, \left\vert u_{0}(z) \cdot \int_{B} e^{(3)}_{n_{0}}(x)dx \right\vert^{2}}{\left\vert  \epsilon_{0}(z) - \left(  \epsilon_{0}(z) - \epsilon_{p} \right) \, \lambda^{(3)}_{n_{0}}  \right\vert^{2}} + \mathcal{O}\left(a^{\min(0,1-3h)} \right),
\end{eqnarray*}
or, after scaling back, 
\begin{equation}\label{KMYHAYD}
\int_{D} \left\vert u_{1} \right\vert^{2}(x) \, dx = a^{3} \; \frac{\left\vert  \epsilon_{0}(z) \right\vert^{2} \, \left\vert u_{0}(z) \cdot \int_{B} e^{(3)}_{n_{0}}(x)dx \right\vert^{2}}{\left\vert  \epsilon_{0}(z) - \left(  \epsilon_{0}(z) - \epsilon_{p} \right) \, \lambda^{(3)}_{n_{0}}  \right\vert^{2}} + \mathcal{O}\left(a^{\min(3,4-3h)} \right).
\end{equation}
This proves $(\ref{THM-KMYHAYD})$. 

\subsection{Photo-acoustic model in the presence of one particle and proof of (\ref{PrincipalFormula})}
Let us start by recalling the model problem of the photo-acoustic imaging:
\begin{equation}\label{pa2nd}
\begin{cases} 
\partial_{t}^{\prime\prime} p(x,t) - \underset{x}{\Delta} \, p(x,t) \, = 0, \quad \text{in} \quad \mathbb{R}^{3} \times \mathbb{R}^{+} \\ 
\qquad \qquad \qquad p(x,0) = \Im\left( \varepsilon \right)(x) \, \left\Vert E \right\Vert^{2}(x) \, \chi_{\Omega} \quad \text{in} \quad \mathbb{R}^{3} \\
\qquad \qquad \, \quad \partial_{t} p(x,0) = 0 \quad \text{in} \quad \mathbb{R}^{3}.
\end{cases}
\end{equation}
From \cite{Neil}, Corollary 4.1, page 180, we know that the solution of $(\ref{pa2nd})$, can be represented as:
\begin{equation*}
p(x,t) = \frac{1}{4 \, \pi} \frac{\partial}{\partial t} \left[ \frac{1}{t} \; \int_{\partial B(x,t)} \Im\left( \varepsilon \right)(y) \, \left\Vert E \right\Vert^{2}(y) \, \chi_{\Omega}(y)  \; d\sigma(y) \, \right], \quad x \in \partial \Omega, \, t >0,
\end{equation*}
where $B(x,t)$ is the ball of center $x$ and radius $t$. Remark that  
\begin{eqnarray*}
\bm{J} := \int_{\partial B(x,t)} \Im\left( \varepsilon \right)(y) \, \left\Vert E \right\Vert^{2}(y) \, \chi_{\Omega}(y)  \; d\sigma(y) & \overset{(\ref{diff-int})}{=} & \partial_{t} \int_{B(x,t)} \Im\left( \varepsilon \right)(y) \, \left\Vert E \right\Vert^{2}(y) \, \chi_{\Omega}(y) \; dy \\
& = & \partial_{t} \int_{B(x,t) \cap \Omega} \Im\left( \varepsilon \right)(y) \, \left\Vert E \right\Vert^{2}(y) \; dy 
\end{eqnarray*}
and for $t > diam(\Omega)$, we know that $B(x,t) \cap \Omega = \Omega$ and this implies $\bm{J} = 0$ which translates the Huygens principle. To fix notations, in the presence of one particle, we set $E := u_{1}$ and we rewrite the previous representation of $p(\cdot, \cdot)$ as 
\begin{equation*}
p(x,t) = \frac{1}{4 \, \pi} \frac{\partial}{\partial t} \left[ \frac{1}{t} \; \int_{\partial B(x,t)} \Im\left( \varepsilon \right)(y) \, \left\vert u_{1} \right\vert^{2}(y) \, \chi_{\Omega}(y)  \; d\sigma(y) \, \right].
\end{equation*}
Now, taking the integral from $0$ to $r$, $r \leq diam(\Omega) $, in both sides of the previous equation to get  
\begin{eqnarray*}
\int_{0}^{r} p(x,t) \, dt &=& \int_{0}^{r} \frac{1}{4 \, \pi} \frac{\partial}{\partial t} \left[ \frac{1}{t} \; \int_{\partial B(x,t)} \Im\left( \varepsilon \right)(y) \, \left\vert u_{1} \right\vert^{2}(y) \, \chi_{\Omega}(y)  \; d\sigma(y) \, \right] \, dt \\
\int_{0}^{r} p(x,t) \, dt &=& \frac{1}{4 \, \pi \, r} \; \int_{\partial B(x,r)} \Im\left( \varepsilon \right)(y) \, \left\vert u_{1} \right\vert^{2}(y) \, \chi_{\Omega}(y)  \; d\sigma(y),
\end{eqnarray*}
or
\begin{equation}\label{OW}
r \; \int_{0}^{r} p(x,t) \, dt = \frac{1}{4 \, \pi} \; \int_{\partial B(x,r)} \Im\left( \varepsilon \right)(y) \, \left\vert u_{1} \right\vert^{2}(y) \, \chi_{\Omega}(y)  \; d\sigma(y).
\end{equation}
On the right hand side, for technical reasons, we need to make a volume integral appear instead of surface one. For this, having in mind that on the ball centered at the origin of radius $\rho$ we have\footnote{In differential form we obtain: 
\begin{equation}\label{diff-int}
\partial_{\rho} \, \int_{B(0,\rho)} (\cdots) \, d \mu = \int_{\partial B(0,\rho)} (\cdots) \, d\sigma, \quad \rho >0.
\end{equation}
}: 
\begin{equation*}
\int_{B(0,\rho)} (\cdots) \, d \mu = \int_{0}^{\rho} \int_{\partial B(0,s)} (\cdots) \, d\sigma \, ds, \quad \rho >0, \; s>0,
\end{equation*}
we integrate $(\ref{OW})$ to obtain  
\begin{eqnarray*}
\int_{0}^{s} r \; \int_{0}^{r} p(x,t) \, dt \, dr &=& \frac{1}{4 \, \pi} \int_{0}^{s} \; \int_{\partial B(x,r)} \Im\left( \varepsilon \right)(y) \, \left\vert u_{1} \right\vert^{2}(y) \, \chi_{\Omega}(y)  \; d\sigma(y) \, dr \\
&=& \frac{1}{4 \, \pi}  \int_{B(x,s)} \Im\left( \varepsilon \right)(y) \, \left\vert u_{1} \right\vert^{2}(y) \, \chi_{\Omega}(y)  \; dy. 
\end{eqnarray*}
For shortness, we set 
\begin{equation*}
p^{\star}(x,s) := \int_{0}^{s} r \; \int_{0}^{r} p(x,t) \, dt \, dr
\end{equation*}
and, then, we end up with the following equation 
\begin{equation*}
p^{\star}(x,s) = \frac{1}{4 \, \pi}  \int_{B(x,s)} \Im\left( \varepsilon \right)(y) \, \left\vert u_{1} \right\vert^{2}(y) \, \chi_{\Omega}(y)  \; dy.
\end{equation*}
Next, we split the domain of integration into two parts as follows 
\begin{equation}\label{Finite-speed-propagation}
p^{\star}(x,s) = \frac{1}{4 \, \pi}  \int_{B(x,s) \cap D} \Im\left( \varepsilon \right)(y) \, \left\vert u_{1} \right\vert^{2}(y)  \; dy + \frac{1}{4 \, \pi}  \int_{B(x,s) \cap \left(\Omega \setminus D \right)} \Im\left( \varepsilon \right)(y) \, \left\vert u_{1} \right\vert^{2}(y)   \; dy,
\end{equation}
and for
\begin{equation}\label{diam+dist}
diam(D) + dist(x,D) \leq s,
\end{equation}
we reduce the previous equation to 
\begin{equation*}
p^{\star}(x,s) = \frac{1}{4 \, \pi}  \int_{D} \Im\left( \varepsilon \right)(y) \, \left\vert u_{1} \right\vert^{2}(y)  \; dy + \frac{1}{4 \, \pi}  \int_{B(x,s) \cap \left(\Omega \setminus D \right)} \Im\left( \varepsilon \right)(y) \, \left\vert u_{1} \right\vert^{2}(y)   \; dy,
\end{equation*}
and regarding the definition of the permittivity function $\varepsilon(\cdot)$, see $(\ref{DefPermittivityfct})$, we obtain 
\begin{eqnarray}\label{pressure-equation}
\nonumber
p^{\star}(x,s) &=& \frac{1}{4 \, \pi} \, \Im\left( \epsilon_{p} \right) \, \int_{D} \left\vert u_{1} \right\vert^{2}(y)  \; dy + \frac{1}{4 \, \pi}  \int_{B(x,s) \cap \left(\Omega \setminus D \right)} \Im\left( \epsilon_{0} \right)(y) \, \left\vert u_{1} \right\vert^{2}(y)   \; dy \\ \nonumber 
p^{\star}(x,s) &\overset{(\ref{KMYHAYD})}{=}& \frac{a^{3}}{4 \, \pi} \, \Im\left( \epsilon_{p} \right) \,  \frac{\left\vert \epsilon_{0}(z) \right\vert^{2} \, \left\vert \langle u_{0}(z) ; \int_{B} e^{(3)}_{n_{0}}(x) \, dx \rangle \right\vert^{2}}{\left\vert \epsilon_{0}(z) - \left(\epsilon_{0}(z) - \epsilon_{p} \right) \,   \lambda^{(3)}_{n_{0}} \right\vert^{2}}     + \frac{1}{4 \, \pi}  \underset{B(x,s) \cap \left(\Omega \setminus D \right)}{\int} \Im\left( \epsilon_{0} \right)(y) \, \left\vert u_{1} \right\vert^{2}(y)   \; dy \\
&+& \mathcal{O}\left(a^{\min(3,4-3h)}\right).
\end{eqnarray}
In the sequel we analyze the term 
\begin{equation}\label{def T(x,s)}
\bm{T}(x,s) := \frac{1}{4 \, \pi}  \underset{B(x,s) \cap \left(\Omega \setminus D \right)}{\int} \Im\left( \epsilon_{0} \right)(y) \, \left\vert u_{1} \right\vert^{2}(y)   \; dy.
\end{equation}
We derive the following approximation
\begin{lemma}\label{Anal-T}
We have, 
\begin{equation*}
\bm{T}(x,s)= \frac{1}{4 \, \pi}  \underset{B(x,s) \cap \Omega}{\int} \Im\left( \epsilon_{0} \right)(y) \, \left\vert u_{0} \right\vert^{2}(y)   \; dy+ \mathcal{O}\left(  a^{3-h}  \right).
\end{equation*}
\end{lemma}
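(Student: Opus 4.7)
My plan is to compare $\bm{T}(x,s)$ with the target integral via the splitting
\begin{equation*}
\bm{T}(x,s) - \tfrac{1}{4\pi}\!\int_{B(x,s)\cap\Omega}\!\!\Im(\epsilon_0)|u_0|^2\,dy
= \tfrac{1}{4\pi}\!\int_{B(x,s)\cap(\Omega\setminus D)}\!\!\Im(\epsilon_0)\bigl(|u_1|^2-|u_0|^2\bigr)\,dy - \tfrac{1}{4\pi}\!\int_{B(x,s)\cap D}\!\!\Im(\epsilon_0)|u_0|^2\,dy.
\end{equation*}
The last term is trivially $O(a^3)$ since $|D|\sim a^3$, $u_0$ is bounded uniformly in $a$, and $\Im(\epsilon_0)\in L^\infty$. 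So the work is in estimating the first integral, which I denote $I_1$.

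Writing $u^s:=u_1-u_0$ and using $|u_1|^2-|u_0|^2 = 2\Re(u_0\cdot\overline{u^s}) + |u^s|^2$, the plan is to control each piece by exploiting the Lippmann-Schwinger representation
\begin{equation*}
u^s(y) = -\omega^2\mu\int_D G_k(y,z)\,(\epsilon_0(z)-\epsilon_p)\,u_1(z)\,dz, \qquad y\in \Omega\setminus D,
\end{equation*}
from (\ref{SK}), together with the a priori estimate $\|u_1\|_{\mathbb{L}^2(D)}=O(a^{3/2-h})$ of Proposition \ref{Proposition2.1} and the singularity structure of $G_k$ recorded in Theorem \ref{DH}.

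For the cross term $\int \Im(\epsilon_0)\, u_0\cdot\overline{u^s}$ I would insert the Lippmann-Schwinger formula and swap the order of integration (Fubini), producing
\begin{equation*}
-\omega^2\mu \int_D \overline{(\epsilon_0(z)-\epsilon_p)u_1(z)}\cdot H(z)\,dz, \qquad H(z):=\int_{B(x,s)\cap(\Omega\setminus D)} G_k(z,y)^T\,\Im(\epsilon_0(y))\,u_0(y)\,dy.
\end{equation*}
Since $u_0$ is smooth and bounded, and the $z$-dependence places the singular point of $G_k$ inside the complement of the integration domain, the function $H(z)$ is bounded on $D$ (the decomposition $G_k=\Upsilon+\Gamma$ plus the $L^p$ mapping properties of the Newtonian/Magnetization operators of Theorem \ref{DH} guarantee this, with at worst an integrable logarithmic loss as $z$ approaches $\partial D$). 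Cauchy-Schwarz then gives a bound $\|u_1\|_{\mathbb{L}^2(D)}\|H\|_{\mathbb{L}^2(D)}\lesssim a^{3/2-h}\cdot a^{3/2}=a^{3-h}$, as required.

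The main obstacle is the quadratic term $\int_{B(x,s)\cap(\Omega\setminus D)} \Im(\epsilon_0)|u^s|^2\,dy$: the naive estimate coming from the $\mathbb{L}^2\to\mathbb{L}^2$ boundedness of the Calder\'on-Zygmund piece of $G_k$ only yields $\|u^s\|^2_{\mathbb{L}^2(\Omega\setminus D)}\lesssim a^{3-2h}$, which for $h>0$ is weaker than $O(a^{3-h})$. To recover the correct rate, I would split $\Omega\setminus D$ into a thin boundary shell $\{y:\dist(y,D)\leq a^{\alpha}\}$ and its complement, for a suitable $\alpha\in(0,1)$: on the far region one uses $|u^s(y)|\lesssim \dist(y,D)^{-3}\,\|u_1\|_{\mathbb{L}^1(D)}$ combined with $\|u_1\|_{\mathbb{L}^1(D)}\leq|D|^{1/2}\|u_1\|_{\mathbb{L}^2(D)}\lesssim a^{3-h}$; on the shell one uses the CZ $\mathbb{L}^2$-bound weighted by the small shell volume. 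Optimizing $\alpha$ should yield the target $O(a^{3-h})$ remainder. Alternatively, one can iterate the Lippmann-Schwinger duality argument once more, reducing the quadratic term to a bilinear form in $u_1\chi_D$ against a kernel whose $\mathbb{L}^\infty$-norm on $D\times D$ is bounded, which again converts one factor of $\|u_1\|_{\mathbb{L}^2(D)}$ into $\|u_1\|_{\mathbb{L}^1(D)}$ and supplies the missing $a^{3/2}$ factor.
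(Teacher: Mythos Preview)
Your overall architecture matches the paper's: both split off the $D$--integral as $\mathcal O(a^3)$, expand $|u_1|^2-|u_0|^2$ on $\Omega\setminus D$ via the Lippmann--Schwinger representation of $u^s=u_1-u_0$, and treat the cross and quadratic pieces separately. The paper does this through the explicit decomposition $u_1=u_0+\eta(z)\nabla M(u_1)+Err_0+Err_\Gamma$ from $(\ref{sppo})$ (isolating the Magnetization operator as the leading singular part of $G_k$) and then estimates the resulting terms $T_1,\dots,T_8$ one by one; your version keeps the full kernel $G_k$ and uses Fubini/duality. For the cross term your argument is essentially the paper's $T_2$ estimate in disguise.

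Where your proposal has a genuine gap is the quadratic term. Neither of your two suggested fixes works as written. For the near/far split: on the near shell the Calder\'on--Zygmund bound is an $\mathbb L^2\!\to\!\mathbb L^2$ estimate, so it gives only $\|u^s\|_{\mathbb L^2(\text{shell})}\le\|u^s\|_{\mathbb L^2(\mathbb R^3)}\lesssim\|u_1\|_{\mathbb L^2(D)}$ with \emph{no} small--volume weighting (singular integrals do not map $\mathbb L^2$ to $\mathbb L^\infty$); and on the far region the multipole bound $|u^s(y)|\lesssim\dist(y,D)^{-3}\|u_1\|_{\mathbb L^1(D)}$, integrated from $\dist\gtrsim a$ outward, still returns $\mathcal O(a^{3-2h})$, so optimising the cut does not buy the missing $a^{h}$. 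For the bilinear alternative: the kernel $K(\xi,\xi')=\int_{\Omega\setminus D}G_k(y,\xi)^{*}G_k(y,\xi')\,dy$ is \emph{not} in $\mathbb L^\infty(D\times D)$, since on the diagonal it reduces to $\vartheta_6(\xi)=\int_{\Omega\setminus D}\Phi_0^6(\eta,\xi)\,d\eta\sim\dist(\xi,\partial D)^{-3}$, so you cannot trade an $\mathbb L^2$ factor of $u_1$ for an $\mathbb L^1$ factor this way.

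The paper, for comparison, handles the quadratic piece as $T_3$ by the direct kernel estimate $\int_D\vartheta_6\lesssim a^3$, yielding $\mathcal O(a^3\|u_1\|_{\mathbb L^2(D)}^2)=\mathcal O(a^{6-2h})$; that is the step you would need to reproduce, and your sketch does not yet do so.
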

We set $p^{\star}_{0}(\cdot, \cdot)$ to be 
\begin{equation}\label{defpre0}
p^{\star}_{0}(x,s) :=  \frac{1}{4 \, \pi}  \int_{B(x,s) \cap \Omega} \Im\left( \epsilon_{0} \right)(y) \, \left\vert u_{0} \right\vert^{2}(y)   \; dy,
\end{equation}
which is nothing but the average pressure in the absence of the particle, inside $\Omega$, at point $x \in \partial \Omega$.
\smallskip

\begin{proof} 
To prove Lemma \ref{Anal-T}, we recall from $(\ref{sppo})$ that $u_{1}(\cdot)$ satisfies the following equation
\begin{equation}\label{BSK}
u_{1} = u_{0} + \eta(z) \, \nabla M(u_{1}) + Err_{0} + Err_{\Gamma},
\end{equation}
where $Err_{0}$ is given by $(\ref{WN})$ and $Err_{\Gamma}$ is given by $(\ref{Err-Gamma})$. Now, we use the representation $(\ref{BSK})$, to compute the square Euclidean norm of $u_{1}(\cdot)$.
\begin{eqnarray}\label{SquareEuclideanNorm}
\nonumber
\left\vert u_{1} \right\vert^{2} = u_{1} \cdot \overline{u}_{1}  &=& \left[ u_{0} + \eta(z) \nabla M(u_{1}) + Err_{0} + Err_{\Gamma}\right]  \cdot  \left[ \overline{u}_{0} + \overline{\eta(z)}  \nabla M(\overline{u}_{1}) + \overline{Err_{0}} + \overline{Err_{\Gamma}} \right] \\ \nonumber
 &=& \left\vert u_{0} \right\vert^{2} + 2 \, \Re\left[ \eta(z) \nabla M(u_{1}) \cdot \overline{u}_{0} \right]  + \left\vert \eta(z)  \right\vert^{2} \, \left\vert \nabla M(u_{1}) \right\vert^{2} + R_{1},
\end{eqnarray}
where $R_{1}$, the remainder part, is be given by:
\begin{equation}\label{Remainder1}
R_{1} := 2 \Re\left[\left( \overline{Err_{0}} + \overline{ Err_{\Gamma}} \right) \,  u_{0}   \right] + \left\vert Err_{0} +  Err_{\Gamma} \right\vert^{2} + 2 \,  \Re\left[\left( \overline{Err_{0}} + \overline{ Err_{\Gamma}} \right) \, \eta(z) \, \nabla M(u_{1})  \right].
\end{equation}
Also, set: 
\begin{equation}\label{Remainder2}
R_{2}(x,s) :=  \frac{1}{4 \, \pi}  \int_{B(x,s) \cap \left( \Omega \setminus D \right)} \Im\left( \epsilon_{0} \right)(y) \, R_{1}(y)   \; dy.
\end{equation}
Going back to the definition of $\bm{T}(\cdot,\cdot)$, see $(\ref{def T(x,s)})$, and using the representation $(\ref{SquareEuclideanNorm})$, for $\left\vert u_{1} \right\vert^{2}$, and the definition of $R_{2}(\cdot,\cdot)$, see for instance $(\ref{Remainder2})$, we rewrite $\bm{T}(\cdot,\cdot)$ as: 
\begin{eqnarray}\label{bfT=p0+R2+T2+...+T6}
\nonumber
\bm{T}(x,s) &=& \frac{1}{4 \, \pi}  \underset{B(x,s) \cap \left(\Omega \setminus D \right)}{\int} \Im\left( \epsilon_{0} \right)(y) \,  \left\vert u_{0} \right\vert^{2}(y) dy + R_{2}(x,s) \\ \nonumber
&+&  \Re\left[ \frac{\eta(z)}{2 \, \pi }  \underset{B(x,s) \cap \left(\Omega \setminus D \right)}{\int} \Im\left( \epsilon_{0} \right)(y) \, \nabla M(u_{1})(y) \cdot \overline{u}_{0}(y) \; dy \; \right]\\ 
&+& \frac{\left\vert \eta(z)  \right\vert^{2}}{4 \, \pi }  \underset{B(x,s) \cap \left(\Omega \setminus D \right)}{\int} \Im\left( \epsilon_{0} \right)(y) \,   \, \left\vert \nabla M(u_{1}) \right\vert^{2}(y) \; dy. 
\end{eqnarray}
Now we estimate the terms appearing on the right hand side equation. 
\begin{enumerate}
\item Computation of
\begin{eqnarray*}
T_{1}(x,s) &:=& \frac{1}{4 \, \pi}  \underset{B(x,s) \cap \left(\Omega \setminus D \right)}{\int} \Im\left( \epsilon_{0} \right)(y) \,  \left\vert u_{0} \right\vert^{2}(y) dy \\
&=& \frac{1}{4 \, \pi}  \underset{B(x,s) \cap \Omega}{\int} \Im\left( \epsilon_{0} \right)(y) \,  \left\vert u_{0} \right\vert^{2}(y) dy -  \frac{1}{4 \, \pi}  \underset{B(x,s) \cap D}{\int} \Im\left( \epsilon_{0} \right)(y) \,  \left\vert u_{0} \right\vert^{2}(y) dy \\ 
& \overset{(\ref{defpre0})}{=} & p^{\star}_{0}(x,s) -  \frac{1}{4 \, \pi}  \underset{B(x,s) \cap D}{\int} \Im\left( \epsilon_{0} \right)(y) \,  \left\vert u_{0} \right\vert^{2}(y) dy \\ 
& \overset{(\ref{diam+dist})}{=} & p^{\star}_{0}(x,s) -  \frac{1}{4 \, \pi}  \underset{D}{\int} \Im\left( \epsilon_{0} \right)(y) \,  \left\vert u_{0} \right\vert^{2}(y) dy. 
\end{eqnarray*} 
As $\Im\left( \epsilon_{0} \right)(\cdot)$ is smooth function we derive the following estimation 
\begin{equation*}
\left\vert -  \frac{1}{4 \, \pi}  \underset{D}{\int} \Im\left( \epsilon_{0} \right)(y) \,  \left\vert u_{0} \right\vert^{2}(y) dy \right\vert \lesssim \left\Vert u_{0} \right\Vert^{2}_{\mathbb{L}^{2}(D)} = \mathcal{O}\left(a^{3}\right). 
\end{equation*} 
Then, 
\begin{equation}\label{T1=pstar}
T_{1}(x,s) = p^{\star}_{0}(x,s) + \mathcal{O}\left(a^{3}\right).
\end{equation}
\item Estimation of  
\begin{eqnarray*}
T_{2}(x,s) &:=& \Re\left[ \frac{\eta(z)}{2 \, \pi} \, \underset{B(x,s) \cap \left(\Omega \setminus D \right)}{\int} \Im\left( \epsilon_{0} \right)(y) \, \nabla M(u_{1})(y) \cdot \overline{u}_{0}(y) \; dy \; \right]\\ 
\left\vert T_{2}(x,s) \right\vert & \lesssim & \left\vert  \, \underset{B(x,s) \cap \left(\Omega \setminus D \right)}{\int} \Im\left( \epsilon_{0} \right)(y) \, \nabla M(u_{1})(y) \cdot \overline{u}_{0}(y) \; dy \; \right\vert . 
\end{eqnarray*}
Since $\Im\left( \epsilon_{0} \right)(\cdot)$ is smooth function in $\Omega$, we obtain 
\begin{eqnarray*} 
\left\vert T_{2}(x,s) \right\vert & \lesssim &   \underset{B(x,s) \cap \left(\Omega \setminus D \right)}{\int} \left\vert  \nabla M(u_{1})(y) \cdot \overline{u}_{0}(y) \;\right\vert \,  dy \\ 
 & \leq &    \left\Vert  \nabla M(u_{1}) \right\Vert_{\mathbb{L}^{2}\left( B(x,s) \cap \left(\Omega \setminus D \right)\right)} \; \left\Vert u_{0} \right\Vert_{\mathbb{L}^{2}\left( B(x,s) \cap \left(\Omega \setminus D \right)\right)}.
\end{eqnarray*}
Certainly, 
\begin{equation}\label{u0inOmegastar}
\left\Vert u_{0} \right\Vert_{\mathbb{L}^{2}\left( B(x,s) \cap \left(\Omega \setminus D \right)\right)} \sim 1.
\end{equation}
Then,  
\begin{equation}\label{ChinaVsUsa} 
\left\vert T_{2}(x,s) \right\vert  \lesssim       \left\Vert  \nabla M(u_{1}) \right\Vert_{\mathbb{L}^{2}\left( B(x,s) \cap \left(\Omega \setminus D \right)\right)}   :=   \left[ \underset{B(x,s) \cap \left(\Omega \setminus D \right)}{\int} \left\vert  \underset{\eta}{\nabla} \int_{D} \underset{\xi}{\nabla} \Phi_{0}(\eta,\xi) \cdot u_{1} (\xi) \, d\xi \right\vert^{2} \, d \eta \right]^{\frac{1}{2}}.
\end{equation}
Given that $\eta$ and $\xi$ are in two distinct domains we can 
exchange the operator $\underset{\eta}{\nabla}$ and the integral over $D$, with respect to the variable $\xi$, to obtain, inside the integral, the Hessian operator. Afterwards, we use the fact that $Hess \, \Phi_{0}$ has the same singularity as $\, \Phi_{0}^{3} \, \bm{I} \,$ to obtain: 
\begin{equation*} 
\left\vert T_{2}(x,s) \right\vert  \leq   \left[ \underset{B(x,s) \cap \left(\Omega \setminus D \right)}{\int} \left\vert  \underset{D}{\int} \Phi^{3}_{0}(\eta,\xi)  u_{1} (\xi) \, d\xi \right\vert^{2} \, d \eta \right]^{\frac{1}{2}}  \leq  \left\Vert u_{1} \right\Vert_{\mathbb{L}^{2}(D)} \; \left[ \underset{D}{\int} \underset{B(x,s) \cap \left(\Omega \setminus D \right)}{\int} \, \Phi^{6}_{0}(\eta,\xi) \, d\eta \, d\xi   \right]^{\frac{1}{2}}.
\end{equation*}
As $\eta$ and $\xi$ are in two disjoint domains, the function $\vartheta_{6}(\cdot)$, defined in the domain $D$, by:
\begin{equation}\label{seqfct}
\vartheta_{6}(\cdot) := \underset{B(x,s) \cap \left(\Omega \setminus D \right)}{\int} \, \Phi^{6}_{0}(\eta,\cdot) \, d\eta
\end{equation}
is a smooth one. Consequently, 
\begin{equation}\label{T2xs}
\left\vert T_{2}(x,s) \right\vert  =  \mathcal{O}\left( \left\Vert u_{1} \right\Vert_{\mathbb{L}^{2}(D)} \; a^{\frac{3}{2}} \right).
\end{equation}
\item Estimation of 
\begin{eqnarray}\label{T4XS}
\nonumber
T_{3}(x,s) &:=& \frac{\left\vert \eta(z) \right\vert^{2}}{4 \, \pi}  \underset{B(x,s) \cap \left(\Omega \setminus D \right)}{\int} \Im\left( \epsilon_{0} \right)(y) \,   \, \left\vert \nabla M(u_{1}) \right\vert^{2}(y) \; dy \\
\left\vert T_{3}(x,s) \right\vert & \lesssim &   \underset{B(x,s) \cap \left(\Omega \setminus D \right)}{\int}  \,   \, \left\vert \nabla M(u_{1}) \right\vert^{2}(y) \; dy \\ \nonumber
& = &  \underset{B(x,s) \cap \left(\Omega \setminus D \right)}{\int}    \, \left\vert  \int_{D} \underset{\eta}{Hess} \, \Phi_{0}(\eta,y) \cdot u_{1}(\eta) \, d\eta \right\vert^{2} \; dy \\ \nonumber
& \simeq &  \underset{B(x,s) \cap \left(\Omega \setminus D \right)}{\int}    \, \left\vert  \int_{D} \,\Phi^{3}_{0}(\eta,y) \, u_{1}(\eta) \, d\eta \right\vert^{2} \; dy \leq  \left\Vert u_{1} \right\Vert^{2}_{\mathbb{L}^{2}(D)} \; \int_{D} \vartheta_{6}(\eta) \, d\eta.
\end{eqnarray}
Lastly, 
\begin{equation}\label{456}
\left\vert T_{3}(x,s) \right\vert  =  \mathcal{O}\left( \left\Vert u_{1} \right\Vert^{2}_{\mathbb{L}^{2}(D)} \; a^{3} \right).
\end{equation}
\end{enumerate}
We move back to $(\ref{bfT=p0+R2+T2+...+T6})$, using $(\ref{T1=pstar}), (\ref{T2xs})$ and $(\ref{456})$, to obtain 
\begin{eqnarray}\label{bmT=pstar+R2+Err}
\nonumber
\bm{T}(x,s) &=&  p^{\star}_{0}(x,s) + R_{2}(x,s) + \mathcal{O}\left(a^{3}\right)  + \mathcal{O}\left( \left\Vert u_{1} \right\Vert_{\mathbb{L}^{2}(D)} \; a^{\frac{3}{2}} \right) + \mathcal{O}\left( \left\Vert u_{1} \right\Vert^{2}_{\mathbb{L}^{2}(D)} \; a^{3} \right) \\
& \overset{(\ref{aprioriestimateregimemoderate})}{=} &  p^{\star}_{0}(x,s) + R_{2}(x,s) + \mathcal{O}\left(a^{3-h}\right).
\end{eqnarray}
Currently, to finish with the estimation of $\bm{T}(\cdot , \cdot)$, we need to estimate $R_{2}(\cdot , \cdot)$. Therefore, we start by recalling, from $(\ref{Remainder2})$, the definition of $R_{2}(\cdot, \cdot)$,  
\begin{eqnarray}\label{R2(x,s)-expression}
\nonumber
R_{2}(x,s) &:=& \frac{1}{4 \, \pi}  \underset{B(x,s) \cap \left( \Omega \setminus D \right)}{\int} \Im\left( \epsilon_{0} \right)(y) \, R_{1}(y) \; dy \\ \nonumber
&\overset{(\ref{Remainder1})}{=}&  \Re\left[ \frac{1}{2 \, \pi}  \underset{B(x,s) \cap \left( \Omega \setminus D \right)}{\int} \Im\left( \epsilon_{0} \right)(y) \,  Err_{0}(y) \cdot   \overline{u}_{0} (y) \; dy \right] \\ \nonumber
&+&  \Re\left[ \frac{1}{2 \, \pi}  \underset{B(x,s) \cap \left( \Omega \setminus D \right)}{\int} \Im\left( \epsilon_{0} \right)(y) \,  Err_{\Gamma}(y) \cdot   \overline{u}_{0}(y) \; dy \right] \\ \nonumber
&+& \frac{1}{4 \, \pi}  \underset{B(x,s) \cap \left( \Omega \setminus D \right)}{\int} \Im\left( \epsilon_{0} \right)(y) \, \left\vert Err_{0} +  Err_{\Gamma} \right\vert^{2}(y) \; dy \\ \nonumber
&+& \Re\left[ \underset{B(x,s) \cap \left( \Omega \setminus D \right)}{\int} \frac{\eta(z)}{2 \, \pi }  \Im\left( \epsilon_{0} \right)(y) \, Err_{0}(y) \cdot \nabla M(\overline{u}_{1}) (y) \; dy \right] \\ 
&+& \Re\left[ \underset{B(x,s) \cap \left( \Omega \setminus D \right)}{\int} \frac{\eta(z)}{2 \, \pi }  \Im\left( \epsilon_{0} \right)(y) \, Err_{\Gamma}(y) \cdot \nabla M(\overline{u}_{1})(y) \; dy \right].
\end{eqnarray}
Now, as before, we need to estimate all terms appearing on the right hand side. 
\begin{enumerate}
\item Estimation of 
\begin{equation*}
T_{4}(x,s) := \Re\left[ \frac{1}{2 \, \pi}  \underset{B(x,s) \cap \left( \Omega \setminus D \right)}{\int} \Im\left( \epsilon_{0} \right)(y) \,  Err_{0}(y) \cdot   \overline{u}_{0} (y) \; dy \right].
\end{equation*}
Using the definition of the term  $Err_{0}(\cdot)$, see $(\ref{WN})$, we rewrite the previous equation as 
\begin{equation*}
T_{4}(x,s) \simeq \underset{B(x,s) \cap \left( \Omega \setminus D \right)}{\int} \Im\left( \epsilon_{0} \right)(y) \nabla M \left( u_{1}(\cdot)  \int_{0}^{1}  \nabla \eta(z+t(\cdot -z)) \cdot (\cdot -z) dt \right)(y) \cdot   \overline{u}_{0} (y) dy,
\end{equation*}
and after applying the Cauchy-Schwartz inequality we get 
\begin{eqnarray}\label{AST7}
\nonumber
\left\vert T_{4}(x,s) \right\vert & \lesssim & \left\Vert \Im\left( \epsilon_{0} \right) \, u_{0} \right\Vert_{\mathbb{L}^{2}\left( B(x,s) \cap \left( \Omega \setminus D \right)\right)} \\ \nonumber &&\; \left\Vert \nabla M\left( u_{1}(\cdot)  \int_{0}^{1}  \nabla \eta(z+t(\cdot -z)) \cdot (\cdot -z) dt \right) \right\Vert_{\mathbb{L}^{2}\left( B(x,s) \cap \left( \Omega \setminus D \right)\right)} \\
& \overset{(\ref{u0inOmegastar})}{\simeq} & \left\Vert \nabla M\left( u_{1}(\cdot)  \int_{0}^{1}  \nabla \eta(z+t(\cdot -z)) \cdot (\cdot -z) dt \right) \right\Vert_{\mathbb{L}^{2}\left( B(x,s) \cap \left( \Omega \setminus D \right)\right)}. 
\end{eqnarray}
As done in $(\ref{T4XS})$ and $(\ref{456})$ we deduce that 
\begin{eqnarray*}
\left\vert T_{4}(x,s) \right\vert & \simeq & a^{\frac{3}{2}} \; \left\Vert  u_{1}(\cdot)  \int_{0}^{1}  \nabla \eta(z+t(\cdot -z)) \cdot (\cdot -z) dt  \right\Vert_{\mathbb{L}^{2}\left(  D \right)} \leq a^{\frac{5}{2}} \; \left\Vert  u_{1}  \right\Vert_{\mathbb{L}^{2}\left(  D \right)}. 
\end{eqnarray*}
Then, 
\begin{equation}\label{5.57}
\left\vert T_{4}(x,s) \right\vert = \mathcal{O}\left( a^{\frac{5}{2}} \; \left\Vert  u_{1}  \right\Vert_{\mathbb{L}^{2}\left(  D \right)} \right).
\end{equation}
\item Estimation of 
\begin{eqnarray*}
T_{5}(x,s) &:=& \Re\left[ \frac{1}{2 \, \pi}  \underset{B(x,s) \cap \left( \Omega \setminus D \right)}{\int} \Im\left( \epsilon_{0} \right)(y) \,  Err_{\Gamma}(y) \cdot   \overline{u}_{0}(y) \; dy \right] \\
& \overset{(\ref{Err-Gamma})
}{=} & \Re\left[ \frac{- \omega^{2} \, \mu}{2 \, \pi}  \underset{B(x,s) \cap \left( \Omega \setminus D \right)}{\int} \Im\left( \epsilon_{0} \right)(y) \,  \int_{D} \Gamma(y,\eta) \cdot u_{1}(\eta) \, (\epsilon_{0} - \epsilon_{p})(\eta) \, d\eta \cdot   \overline{u}_{0} (y) \; dy \right].
\end{eqnarray*}
Taking the modulus in both sides, using Cauchy-Schwartz inequality and the estimation $(\ref{u0inOmegastar})$ we obtain 
\begin{eqnarray}\label{AST8}
\left\vert T_{5}(x,s) \right\vert & \lesssim & \left\Vert \int_{D} \Gamma(\cdot ,\eta) \cdot u_{1}(\eta) \, (\epsilon_{0} - \epsilon_{p})(\eta) \, d\eta \right\Vert_{\mathbb{L}^{2}\left( B(x,s) \cap \left( \Omega \setminus D \right)\right)} \\ \nonumber
& \lesssim & \left\Vert \int_{D} \Gamma(\cdot ,\eta) \cdot u_{1}(\eta)\, d\eta \right\Vert_{\mathbb{L}^{2}\left( B(x,s) \cap \left( \Omega \setminus D \right)\right)} \\ \label{RLCR}
& \lesssim & \left\Vert  u_{1}  \right\Vert_{\mathbb{L}^{2}\left(  D \right)} \, \left[ \int_{D} \, \underset{B(x,s) \cap \left( \Omega \setminus D \right)}{\int}  \left\vert \Gamma(\eta , y) \right\vert^{2} \, dy \, d\eta \right]^{\frac{1}{2}}.
\end{eqnarray}
Next, set $\vartheta^{\star}(\cdot)$ to be 
\begin{equation*}
\vartheta^{\star}(\cdot) := \underset{B(x,s) \cap \left( \Omega \setminus D \right)}{\int}  \left\vert \Gamma(\cdot , y) \right\vert^{2} \, dy.
\end{equation*}
Obviously, when the point $y$ is away from the particle $D$ the function $\vartheta^{\star}(\cdot)$ is smooth one, and in the other case, i.e. when $y$ is close to $D$, we use the representation $(\ref{AY})$ of $\Gamma(\cdot,\cdot)$ to rewrite $\vartheta^{\star}(\cdot)$ as:
\begin{eqnarray*}
\vartheta^{\star}(\cdot) &=& \underset{B(x,s) \cap \left( \Omega \setminus D \right)}{\int} \left\vert \frac{-1}{\omega^{2} \, \mu \, \left(\epsilon_{0}(y)\right)^{2}} \, \nabla \, \nabla M \left(  \Phi_{0}(\cdot,y) \, \nabla \epsilon_{0}(y) \right)(\cdot) + W_{4}(\cdot ,y) \right\vert^{2} \, dy \\
\vartheta^{\star}(\cdot) & \lesssim & \left\Vert \epsilon^{-2}_{0}(\cdot) \right\Vert_{\mathbb{L}^{\infty}(B(x,s) \cap \left( \Omega \setminus D \right))} \underset{B(x,s) \cap \left( \Omega \setminus D \right)}{\int} \left\vert \nabla \, \nabla M \left(  \Phi_{0}(\cdot,y)  \, \nabla \epsilon_{0}(y) \right)(\cdot)  \right\vert^{2} \, dy  \\ &+& \underset{B(x,s) \cap \left( \Omega \setminus D \right)}{\int} \left\vert W_{4}(\cdot ,y) \right\vert^{2} \, dy. 
\end{eqnarray*}
We assume that $ \left\Vert \epsilon^{-2}_{0}(\cdot) \right\Vert_{\mathbb{L}^{\infty}(B(x,s) \cap \left( \Omega \setminus D \right))} = \mathcal{O}(1)$ and, by an integration by parts, we rewrite the previous inequality as:
\begin{eqnarray*}
\vartheta^{\star}(\cdot) & \lesssim & \underset{B(x,s) \cap \left( \Omega \setminus D \right)}{\int} \left\vert - \nabla \, \nabla N \div N \left( \Phi_{0}(\cdot,y)  \nabla \epsilon_{0}(y) \right)(\cdot)  +  \nabla \, \nabla SL \left(\nu \cdot N \left( \Phi_{0} (\cdot, y) \nabla \epsilon_{0}(y) \right)(\cdot) \right) \right\vert^{2} \, dy\\ &+& \underset{B(x,s) \cap \left( \Omega \setminus D \right)}{\int} \left\vert W_{4}(\cdot ,y) \right\vert^{2} \, dy,
\end{eqnarray*}
where $SL(\cdot)$ denote the Single Layer operator defined by:
\begin{eqnarray*}
SL : \quad \mathbb{H}^{\frac{1}{2}}\left(\partial \left( B(x,s) \cap \left( \Omega \setminus D \right)\right) \right) & \rightarrow & \mathbb{H}^{2}\left( B(x,s) \cap \left( \Omega \setminus D \right) \right) \\
f(\cdot) & \rightarrow & SL \left( f \right)(\cdot) := \underset{\partial \left( B(x ,s) \cap \left( \Omega \setminus D \right)\right)}{\int} \Phi_{0}(\cdot ,y) \, f(y) \, d\sigma(y).  
\end{eqnarray*}
At this stage, for the first term, using the Calderon-Zygmund inequality and the continuity of the operator $\nabla \, \nabla SL: \mathbb{H}^{\frac{1}{2}}\left(\partial \left( B(x,s) \cap \left( \Omega \setminus D \right)\right) \right) \rightarrow \mathbb{L}^{2}\left( B(x,s) \cap \left( \Omega \setminus D \right) \right)$ we get:\footnote{We recall that:
\begin{equation*}
\left\Vert f \right\Vert^{2}_{\mathbb{H}^{\frac{1}{2}}(\partial \Omega)}
= \left\Vert f \right\Vert^{2}_{\mathbb{L}^{2}(\partial \Omega)} + \int_{\partial \Omega} \, \int_{\partial \Omega} \, \frac{\left\vert f(x) - f(y) \right\vert^{2}}{\left\vert x - y \right\vert^{3}} \, d\sigma(x) \, d\sigma(y).  
\end{equation*}

 } 
\begin{eqnarray*}
\vartheta^{\star}(\cdot) & \lesssim & \underset{B(x,s) \cap \left( \Omega \setminus D \right)}{\int} \left\vert  \div N \left( \Phi_{0}(\cdot,y)  \nabla \epsilon_{0}(y) \right)(\cdot)  \right\vert^{2} \, dy \\ &+& \left\Vert \nu \cdot N \left( \Phi_{0} (\cdot, \cdot) \nabla \epsilon_{0}(\cdot) \right) \right\Vert^{2}_{\mathbb{H}^{\frac{1}{2}} \left( \partial \left( B(x,s) \cap \left( \Omega \setminus D \right)\right)  \right) } + \underset{B(x,s) \cap \left( \Omega \setminus D \right)}{\int} \left\vert W_{4}(\cdot ,y) \right\vert^{2} \, dy.
\end{eqnarray*}
We use the continuity of the operator $\div N : \mathbb{L}^{2}\left( B(x,s) \cap \left( \Omega \setminus D \right) \right) \rightarrow \mathbb{L}^{2}\left( B(x,s) \cap \left( \Omega \setminus D \right) \right) $ and the continuity of the trace operator to obtain: 
\begin{eqnarray*}
\vartheta^{\star}(\cdot) & \lesssim & \underset{B(x,s) \cap \left( \Omega \setminus D \right)}{\int}  \Phi^{2}_{0}(\cdot,y) \, \left\vert \nabla \epsilon_{0}(y) \right\vert^{2} \, dy + \left\Vert  N \left( \Phi_{0} (\cdot, \cdot) \nabla \epsilon_{0}(\cdot) \right) \right\Vert^{2}_{\mathbb{H}^{1}  \left( B(x,s) \cap \left( \Omega \setminus D \right)  \right) } \\ &+& \underset{B(x,s) \cap \left( \Omega \setminus D \right)}{\int} \left\vert W_{4}(\cdot ,y) \right\vert^{2} \, dy. 
\end{eqnarray*}
We use the continuity of the Newtonian operator to reduce the previous inequality to: 
\begin{equation*}
\vartheta^{\star}(\cdot)  \lesssim  \left\Vert \nabla \epsilon_{0} \right\Vert^{2}_{\mathbb{L}^{\infty}(B(x,s) \cap \left( \Omega \setminus D \right))} \, \underset{B(x,s) \cap \left( \Omega \setminus D \right)}{\int}  \Phi^{2}_{0}(\cdot,y)  \, dy  + \underset{B(x,s) \cap \left( \Omega \setminus D \right)}{\int} \left\vert W_{4}(\cdot ,y) \right\vert^{2} \, dy,
\end{equation*}
and we assume that $ \left\Vert \nabla \epsilon_{0}(\cdot) \right\Vert_{\mathbb{L}^{\infty}(B(x,s) \cap \left( \Omega \setminus D \right))} = \mathcal{O}(1)$ to get: 
\begin{equation*}
\vartheta^{\star}(\cdot)  \lesssim  \underset{B(x,s) \cap \left( \Omega \setminus D \right)}{\int}  \Phi^{2}_{0}(\cdot,y)  \, dy  + \underset{B(x,s) \cap \left( \Omega \setminus D \right)}{\int} \left\vert W_{4}(\cdot ,y) \right\vert^{2} \, dy  \overset{(\ref{seqfct})}{=}  \vartheta_{2}(\cdot) + \underset{B(x,s) \cap \left( \Omega \setminus D \right)}{\int} \left\vert W_{4}(\cdot ,y) \right\vert^{2} \, dy.
\end{equation*}

Keeping the dominant term of $\vartheta^{\star}(\cdot)$  which is, as we have proved, $\vartheta_{2}(\cdot)$  to get from $(\ref{RLCR})$ the following estimation: 
\begin{equation*}
\left\vert T_{5}(x,s) \right\vert  \lesssim  \left\Vert  u_{1}  \right\Vert_{\mathbb{L}^{2}\left(  D \right)} \, \left[ \int_{D} \, \vartheta_{2}(\eta) \, d\eta \right]^{\frac{1}{2}} 
\end{equation*}
and we have seen that $\vartheta_{2}(\cdot)$ is a smooth function. Lastly, 
\begin{equation}\label{5.59}
\left\vert T_{5}(x,s) \right\vert = \mathcal{O}\left( a^{\frac{3}{2}} \, \left\Vert  u_{1}  \right\Vert_{\mathbb{L}^{2}\left(  D \right)} \right). 
\end{equation}
\item Estimation of 
\begin{eqnarray*}
T_{6}(x,s) &:=& \frac{1}{4 \, \pi}  \underset{B(x,s) \cap \left( \Omega \setminus D \right)}{\int} \Im\left( \epsilon_{0} \right)(y) \, \left\vert Err_{0} +  Err_{\Gamma} \right\vert^{2}(y) \; dy \\
\left\vert T_{6}(x,s) \right\vert & \lesssim &   \underset{B(x,s) \cap \left( \Omega \setminus D \right)}{\int} \left\vert Err_{0} \right\vert^{2}(y) \, dy +  \underset{B(x,s) \cap \left( \Omega \setminus D \right)}{\int} \left\vert Err_{\Gamma} \right\vert^{2}(y) \; dy.
\end{eqnarray*}
Using the definition of $Err_{0}(\cdot)$, see $(\ref{WN})$, we remark that the first term on the right hand side is nothing but $\left\vert T_{4}(x,s) \right\vert^{2}$, see $(\ref{AST7})$. In the same manner, the second term can be seen as $\left\vert T_{5}(x,s) \right\vert^{2}$, see $(\ref{AST8})$. Therefore, 
\begin{equation*}
\left\vert T_{6}(x,s) \right\vert  \lesssim  \left\vert T_{4}(x,s) \right\vert^{2} + \left\vert T_{5}(x,s) \right\vert^{2},
\end{equation*}
and, from $(\ref{5.57})$ and $(\ref{5.59})$, we get 
\begin{equation}\label{T9xs}
\left\vert T_{6}(x,s) \right\vert = \mathcal{O}\left(a^{3} \, \left\Vert u_{1} \right\Vert^{2}_{\mathbb{L}^{2}(D)} \right).
\end{equation}
\item Estimation of 
\begin{eqnarray*}
T_{7}(x,s) &:=&  \Re\left[ \underset{B(x,s) \cap \left( \Omega \setminus D \right)}{\int} \frac{  \eta(z)}{2 \, \pi}  \Im\left( \epsilon_{0} \right)(y) \, Err_{0}(y) \cdot \nabla M(\overline{u}_{1}) (y) \; dy \right] \\
\left\vert T_{7}(x,s) \right\vert & \lesssim &  \underset{B(x,s) \cap \left( \Omega \setminus D \right)}{\int} \left\vert \, Err_{0}(y) \cdot \nabla M(\overline{u}_{1}) (y) \right\vert \, dy \\
& \leq &  \left\Vert Err_{0} \right\Vert_{\mathbb{L}^{2}\left(B(x,s) \cap \left( \Omega \setminus D \right)\right)} \; \left\Vert \nabla M(u_{1}) \right\Vert_{\mathbb{L}^{2}\left(B(x,s) \cap \left( \Omega \setminus D \right)\right)}.
\end{eqnarray*}
Similar computations as in the estimation of $T_{2}(\cdot,\cdot)$, see $(\ref{ChinaVsUsa})$, we can estimate 
\begin{equation}\label{1843}
\left\Vert \nabla M(u_{1}) \right\Vert_{\mathbb{L}^{2}\left(B(x,s) \cap \left( \Omega \setminus D \right)\right)} = \mathcal{O}\left(a^{\frac{3}{2}} \, \left\Vert u_{1} \right\Vert_{\mathbb{L}^{2}(D)} \right)
\end{equation}
and, as we have seen in the previous computations, we can estimate  
\begin{equation*}
\left\Vert Err_{0} \right\Vert_{\mathbb{L}^{2}\left(B(x,s) \cap \left( \Omega \setminus D \right)\right)} \overset{(\ref{AST7})}{\sim} \left\vert T_{4}(x,s) \right\vert \overset{(\ref{5.57})}{=} \mathcal{O}\left( a^{\frac{5}{2}} \; \left\Vert  u_{1}  \right\Vert_{\mathbb{L}^{2}\left(  D \right)} \right).
\end{equation*}
This implies, 
\begin{equation}\label{T10xs}
\left\vert T_{7}(x,s) \right\vert = \mathcal{O}\left( a^{4} \; \left\Vert  u_{1}  \right\Vert^{2}_{\mathbb{L}^{2}\left(  D \right)} \right).
\end{equation}
\item Estimation of 
\begin{eqnarray*}
T_{8}(x,s) &:=&  \Re\left[ \underset{B(x,s) \cap \left( \Omega \setminus D \right)}{\int} \frac{  \eta(z)}{2 \, \pi}  \Im\left( \epsilon_{0} \right)(y) \, Err_{\Gamma}(y) \cdot \nabla M(\overline{u}_{1})(y) \; dy \right] \\
\left\vert T_{8}(x,s) \right\vert & \lesssim &  \underset{B(x,s) \cap \left( \Omega \setminus D \right)}{\int} \left\vert Err_{\Gamma}(y) \cdot \nabla M(\overline{u}_{1})(y) \, \right\vert \; dy \\
& \leq &  \left\Vert Err_{\Gamma} \right\Vert_{\mathbb{L}^{2}\left(B(x,s) \cap \left( \Omega \setminus D \right)\right)} \; \left\Vert \nabla M(u_{1}) \right\Vert_{\mathbb{L}^{2}\left(B(x,s) \cap \left( \Omega \setminus D \right)\right)} \\
& \overset{(\ref{1843})}{\lesssim} & \left\Vert Err_{\Gamma} \right\Vert_{\mathbb{L}^{2}\left(B(x,s) \cap \left( \Omega \setminus D \right)\right)} \; a^{\frac{3}{2}} \, \left\Vert u_{1} \right\Vert_{\mathbb{L}^{2}(D)}
\end{eqnarray*}
and, as we have seen in the previous computations, we can estimate  
\begin{equation*}
\left\Vert Err_{\Gamma} \right\Vert_{\mathbb{L}^{2}\left(B(x,s) \cap \left( \Omega \setminus D \right)\right)} \overset{(\ref{AST8})}{\simeq} \left\vert T_{5}(x,s) \right\vert \overset{(\ref{5.59})}{=} \mathcal{O}\left( a^{\frac{3}{2}} \; \left\Vert  u_{1}  \right\Vert_{\mathbb{L}^{2}\left(  D \right)} \right).
\end{equation*}
Finally,  
\begin{equation}\label{T12xs}
\left\vert T_{8}(x,s) \right\vert = \mathcal{O}\left( a^{3} \; \left\Vert  u_{1}  \right\Vert^{2}_{\mathbb{L}^{2}\left(  D \right)} \right).
\end{equation}
\end{enumerate}
Now we are able to estimate, by summing $(\ref{5.57}), (\ref{5.59}), (\ref{T9xs}), (\ref{T10xs})$ and $(\ref{T12xs})$, the term $R_{2}(x,s)$, given by  $(\ref{R2(x,s)-expression})$, and we obtain: 
\begin{equation}\label{R2estimation}
\left\vert R_{2}(x,s) \right\vert = \mathcal{O}\left( a^{\frac{3}{2}} \; \left\Vert  u_{1}  \right\Vert_{\mathbb{L}^{2}\left(  D \right)} \right).
\end{equation}
With help of $(\ref{R2estimation})$ the equation $(\ref{bmT=pstar+R2+Err})$ becomes, 
\begin{equation}\label{final-formula-bmT}
\bm{T}(x,s) =  p^{\star}_{0}(x,s) + \mathcal{O}\left(a^{3-h}\right)  + \mathcal{O}\left( \left\Vert u_{1} \right\Vert_{\mathbb{L}^{2}(D)} \; a^{\frac{3}{2}} \right) \overset{(\ref{aprioriestimateregimemoderate})}{=} p^{\star}_{0}(x,s) + \mathcal{O}\left(a^{3-h}\right).
\end{equation}

\end{proof}

We use the representation $(\ref{final-formula-bmT})$, of $\bm{T}(x,s)$, to rewrite the equation of the pressure, given by $(\ref{pressure-equation})$, in the following form
\begin{equation*}
p^{\star}(x,s) - p^{\star}_{0}(x,s) = \frac{a^{3}}{4 \, \pi} \, \Im\left( \epsilon_{p} \right) \,  \frac{\left\vert \epsilon_{0}(z)  \right\vert^{2} \,  \left\vert \langle u_{0}(z) ; \int_{B} e^{(3)}_{n_{0}}(x) \, dx \rangle \right\vert^{2}}{\left\vert \epsilon_{0}(z) - \left(\epsilon_{0}(z) - \epsilon_{p} \right) \,   \lambda^{(3)}_{n_{0}} \right\vert^{2}} + \mathcal{O}\left(  a^{\min(3-h,4-3h)}  \right). 
\end{equation*}
This proves the formula $(\ref{PrincipalFormula})$ and ends the proof of \textbf{Theorem} $\ref{PrincipalTHM}$.

\section{Construction of the Green's kernel and the Lippmann-Schwinger equation}\label{Green's-function-and-LSE}

\subsection{Construction and regularity of Green's kernel.}\label{AppendixGreenKernel}
This section is dedicated to the proof of \textbf{Theorem $\ref{DH}$}. We want to construct tensor $G_{k}(\cdot,\cdot)
$ which is solution, in the distribution sense, of 
\begin{equation}\label{equa1}    
      \left( Curl \circ Curl - \alpha(\cdot) \right) G_{k}(\cdot,z) =  \underset{z}{\delta}(\cdot) \, I \quad \text{in} \quad \mathbb{R}^{3}, 
\end{equation}
where $\alpha(\cdot) := \omega^{2}\, \mu \, \varepsilon(\cdot)$ and $\varepsilon(\cdot)$ is given by $(\ref{DefPermittivityfct})$, satisfying the following radiation condition at infinity:
\begin{equation*}
\lim_{\left\vert x \right\vert \rightarrow +\infty} \;\; \left\vert x \right\vert \;\; \left( \underset{y}{\nabla} \times G_{k}(x,y) \times \frac{x}{\left\vert x \right\vert} - i \, k \, G_{k}(x,y) \right) = 0.
\end{equation*}
When we deal with the Maxwell system, in contrast to elliptic systems as in the Helmholtz and elastic cases, we need to be more careful because of the strong singularity of its Green's kernel. For this we decompose the kernel $G_{k}(\cdot,\cdot)$ as
\begin{equation}\label{defW}
G_{k}(\cdot,z) = \Upsilon_{k}(\cdot,z) + \Gamma(\cdot,z),
\end{equation}
where $\Upsilon_{k}(\cdot,z)$, the Green's tensor of Maxwell equations for the free space with wave number $k := \omega \, \sqrt{\mu \epsilon_{\infty}} $, is solution of 
\begin{equation}\label{equa2}
\left( Curl \circ Curl - \alpha_{\infty} \right) \Upsilon_{k}(\cdot,z) =  \underset{z}{\delta}(\cdot) \, I,
\end{equation} 
and is given by 
\begin{equation}\label{equa3}
\Upsilon_{k}(\cdot,z) := \Phi_{k}(\cdot,z) \, I + \frac{1}{\alpha_{\infty}} \; \nabla \, \div \left( \Phi_{k}(\cdot,z) \, I \right). 
\end{equation}
Combining $(\ref{equa1}), (\ref{defW})$ and $(\ref{equa2})$, we obtain   
\begin{equation}\label{LZB}
\left( Curl \circ Curl - \alpha(\cdot) \right) \Gamma(\cdot, z) = \left( \alpha(\cdot) - \alpha_{\infty} \right) \, \Upsilon_{k}(\cdot,z) := F(\cdot, z) \quad \text{in} \quad  \mathbb{R}^{3}.
\end{equation}
From the definition of the permittivity $\varepsilon(\cdot)$, see $ (\ref{DefPermittivityfct})$, we remark that $F(\cdot,z)$ is of compact support that we note, in the sequel, by $\Omega$. Now, as 
\begin{equation*}
\Upsilon_{k}(\cdot,z) = \frac{1}{\alpha_{\infty}} Curl \circ Curl \left( \Phi_{k} \, I \right)(\cdot,z) - \frac{1}{\alpha_{\infty}}  \underset{z}{\delta}(\cdot) \, I , 
\end{equation*}
we rewrite $F(\cdot , z)$ as 
\begin{eqnarray*}\label{ADZZK}
\nonumber
F(\cdot , z) & = & \frac{\left( \alpha(\cdot) - \alpha_{\infty} \right)}{\alpha_{\infty}} \; Curl \circ Curl \left( \Phi_{k} \, I \right)(\cdot, z) + \frac{\left( \alpha_{\infty} - \alpha(\cdot) \right)}{\alpha_{\infty}}  \underset{z}{\delta}(\cdot) \, I  \\ \nonumber
& = & \frac{\left( \alpha(\cdot) - \alpha_{\infty} \right)}{\alpha_{\infty}} \; Curl \circ Curl \left( \Phi_{k} \, I \right)(\cdot, z) + \frac{\left( \epsilon_{\infty} - \epsilon_{0} (z) \right)}{\epsilon_{\infty}}  \underset{z}{\delta}(\cdot) \, I  \\
& = & \frac{1}{\alpha_{\infty}} \; \left[ Curl \left( \left( \alpha(\cdot) - \alpha_{\infty} \right) Curl \left( \Phi_{k} \, I \right)(\cdot, z)  \right) - \nabla \alpha(\cdot)  \times Curl\left( \Phi_{k} \, I \right)(\cdot, z) \right] + \frac{\left( \epsilon_{\infty} - \epsilon_{0} (z) \right)}{\epsilon_{\infty}}  \underset{z}{\delta}(\cdot) \, I.
\end{eqnarray*}
This allows to write $F(\cdot,z)$ as 
\begin{equation}\label{F=Curlf+g}
F(\cdot,z) = Curl(f)(\cdot,z) + g(\cdot,z) + \frac{\left( \epsilon_{\infty} - \epsilon_{0} (z) \right)}{\epsilon_{\infty}}  \underset{z}{\delta}(\cdot) \, I,
\end{equation}
where $f(\cdot,z)$ is given by 
\begin{equation*}\label{def-f}
f(\cdot,z) = \frac{\left( \alpha(\cdot) - \alpha_{\infty} \right)}{\alpha_{\infty}} \; Curl \left( \Phi_{k} \, I \right)(\cdot, z)    \quad \text{in} \quad \mathbb{R}^{3}\setminus \{ z \}
\end{equation*}
and $g(\cdot,z)$ is given by
\begin{equation}\label{FW}
g(\cdot,z)  = \frac{-1}{\alpha_{\infty}} \;\nabla \alpha(\cdot)  \times Curl\left( \Phi_{k} \, I \right)(\cdot, z) \quad \text{in} \quad \mathbb{R}^{3}\setminus \{ z \}.
\end{equation}
Remark that even if the two functions $f$ and $g$ are defined in the entire space $\mathbb{R}^{3}$, except the point $z$, they still have a compact support, given by $\Omega$, and this is due to the definition of the permittivity function $\varepsilon(\cdot)$. Using the decomposition $(\ref{F=Curlf+g})$, of $F(\cdot,z)$, we rewrite $(\ref{LZB})$ as: 
\begin{equation}\label{NLCS}
      \left( Curl \circ Curl - \alpha(\cdot) \, I \right) \Gamma(\cdot,z) = Curl(f)(\cdot,z) + g(\cdot,z) + \frac{\left( \epsilon_{\infty} - \epsilon_{0} (z) \right)}{\epsilon_{\infty}}  \underset{z}{\delta}(\cdot) \, I. 
\end{equation}
Clearly, the regularity of $\Gamma$ depends on the regularity of the data sources $f$ and $g$.  For this, as first step, we need
the next lemma to get  precisions about the integrability of $f$ and $g$.
\begin{lemma}\label{SE}
For the functions $g$ and $f$, we have: 
\begin{equation*}
g \in \mathbb{L}^{p}(\Omega) \quad \text{and} \quad Curl(f) \in \left( \mathbb{H}_{0}\left(Curl, q \right) \right)^{\prime},
\end{equation*} 
where 
\begin{equation*}
p = \frac{3}{2} - \delta \quad \text{and q its conjugate number} \quad q := \frac{p}{p-1}=\frac{3 - 2 \delta}{1 - 2 \delta}.  
\end{equation*}
\end{lemma}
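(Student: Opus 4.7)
The plan is to reduce both assertions to the observation that the only singular ingredient in $f$ and $g$ is $Curl(\Phi_{k} I)(\cdot,z)$, whose pointwise behaviour near $z$ is $|x-z|^{-2}$, and to exploit the Sobolev exponent $2p<3$.

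First I would analyse $g$. Since $\epsilon_0 \in \mathcal{C}^1(\Omega)$ and $\alpha(\cdot)=\omega^2\mu\,\varepsilon(\cdot)$ reduces to the constant $\alpha_\infty$ outside $\Omega$, the gradient $\nabla\alpha$ belongs to $\mathbb{L}^\infty(\Omega)$ (the jumps across $\partial\Omega$ and $\partial D$ contribute surface distributions which, by construction, are absorbed into the $Curl(f)$ term through the integration-by-parts identity that produced the splitting (\ref{F=Curlf+g})). A direct entry-by-entry computation gives $|Curl(\Phi_k I)(x,z)|\lesssim |x-z|^{-2}$ on compact sets, so
\begin{equation*}
\|g(\cdot,z)\|_{\mathbb{L}^p(\Omega)}^p \;\lesssim\; \|\nabla\alpha\|_{\mathbb{L}^\infty(\Omega)}^p\,\int_\Omega |x-z|^{-2p}\,dx.
\end{equation*}
With $p=\tfrac{3}{2}-\delta$ one has $2p=3-2\delta<3$, so the integral is finite and $g\in\mathbb{L}^p(\Omega)$.

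Next I would handle $Curl(f)$ by duality. Since $\tfrac{\alpha(\cdot)-\alpha_\infty}{\alpha_\infty}\in\mathbb{L}^\infty(\Omega)$ and again $|Curl(\Phi_k I)(x,z)|\lesssim |x-z|^{-2}$, the same integrability computation shows $f(\cdot,z)\in\mathbb{L}^p(\Omega)$. For every $v\in\mathbb{H}_0(Curl,q)(\Omega)$ the distributional identity $\langle Curl(f),v\rangle=\langle f,Curl(v)\rangle$ together with Hölder's inequality yields
\begin{equation*}
|\langle Curl(f),v\rangle|\;\leq\;\|f\|_{\mathbb{L}^p(\Omega)}\,\|Curl(v)\|_{\mathbb{L}^q(\Omega)}\;\leq\;\|f\|_{\mathbb{L}^p(\Omega)}\,\|v\|_{\mathbb{H}(Curl,q)(\Omega)},
\end{equation*}
where the conjugacy $\tfrac{1}{p}+\tfrac{1}{q}=1$ gives exactly $q=\tfrac{3-2\delta}{1-2\delta}$. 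Hence $Curl(f)$ extends by density to a continuous functional on $\mathbb{H}_0(Curl,q)$, i.e.\ $Curl(f)\in(\mathbb{H}_0(Curl,q))'$.

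The main technical point (and the only place where the restriction $p<3/2$ is essential) is the singularity estimate $|Curl(\Phi_k I)(x,z)|\lesssim |x-z|^{-2}$ combined with the dimensional threshold $2p<3$; the rest is a product of bounded prefactors with Hölder duality. The potentially delicate issue is justifying that $\nabla\alpha$ can be treated as an $\mathbb{L}^\infty$ object — one must verify that the surface Dirac contributions produced by the jumps of $\varepsilon$ at $\partial\Omega$ (and $\partial D$ if present in the background) are already accommodated by the $Curl(f)$ term in (\ref{F=Curlf+g}), otherwise they would spoil the $\mathbb{L}^p$ membership of $g$; this is where the precise bookkeeping of the integration by parts that led to $(\ref{F=Curlf+g})$ must be invoked.
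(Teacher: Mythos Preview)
Your proposal is correct and follows essentially the same route as the paper: both arguments reduce to the pointwise bound $|Curl(\Phi_k I)(x,z)|\lesssim |x-z|^{-2}$, integrate with $2p<3$ to get $g,f\in\mathbb{L}^p(\Omega)$, and then pass to $Curl(f)\in(\mathbb{H}_0(Curl,q))'$ by duality/density. Your explicit H\"older pairing is exactly what the paper compresses into the sentence ``$Curl(f)$ is a distribution that extends \ldots since $\mathcal{C}^\infty_0(\Omega)$ is dense in $\mathbb{H}_0(Curl,q)$''; one small clarification is that the Green's kernel here is built for the background $\bm{n}_0$ (no particle), so there is no interface at $\partial D$ to worry about and the paper simply uses $\nabla\epsilon_0\in\mathbb{L}^\infty(\Omega)$ without further comment.
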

\begin{proof}
From the definition of the function $g$, see $(\ref{FW})$, we obtain:
\begin{equation*}
\left\Vert g \right\Vert^{p}_{\mathbb{L}^{p}(\Omega)} \leq \epsilon^{-p}_{\infty} \, \left\Vert \nabla \epsilon_{0}(\cdot) \right\Vert^{p}_{\mathbb{L}^{\infty}(\Omega)}  \, \int_{\Omega}  \left\vert Curl\left( \Phi_{\omega} \, \bm{I} \right)(x,z) \right\vert^{p} \, dx \leq \epsilon^{-p}_{\infty} \, \left\Vert \nabla \epsilon_{0}(\cdot) \right\Vert^{p}_{\mathbb{L}^{\infty}(\Omega)}  \, 2^{1/2p} \, \int_{\Omega}  \left\vert \nabla \Phi_{\omega}(x,z) \right\vert^{p} \, dx.
\end{equation*}
As $\epsilon_{0}(\cdot)$ is smooth and knowing that $\nabla \Phi_{\omega} \sim \Phi^{2}_{0}$, in singularity analysis point of view, we obtain 
\begin{equation*}
\left\Vert g \right\Vert^{p}_{\mathbb{L}^{p}(\Omega)} \lesssim \, \int_{\Omega}  \left\vert  \Phi_{0}(x,z) \right\vert^{2p} \, dx = \int_{\Omega} \frac{1}{ \left\vert x - z \right\vert^{2p}} \, dx.
\end{equation*}
This last integral is finite if $p < \frac{3}{2}$, which is equivalent to take, $
p = \frac{3}{2} - \delta, \,\, \forall\; \delta>0.$
We denote by $q$ the conjugate number of $p$ given by
$q = \frac{3 - 2 \delta}{1 - 2 \delta}.$
Similarly $f$ is in $\mathbb{L}^{p}(\Omega)$, therefore $Curl(f)$ is a distribution that extends as an element of 
$\left(\mathbb{H}_{0}\left(Curl, q \right) \right)^{\prime}$ since $\mathcal{C}^{\infty}_{0}(\Omega)$ is dense in $\mathbb{H}_{0}\left(Curl, q \right)$. 

\end{proof}
Now that we know the regularity of the second term of the problem $(\ref{NLCS})$, we investigate its solvability and the integrability of its solution. To do this, we split the problem $(\ref{NLCS})$ into three sub-problems:  
\begin{eqnarray}
\nonumber
      \left( Curl \circ Curl - \alpha(\cdot) \, I \right) W_{1} &=& Curl(f), \quad \quad \quad \quad \quad \,\, \text{in} \quad  \mathbb{R}^{3}\\
\label{NLCSW2}
      \left( Curl \circ Curl - \alpha(\cdot) \, I \right) W_{2} &=& g, \quad \quad \quad \quad \quad \quad \quad \quad \, \,  \text{in} \quad \mathbb{R}^{3} \\
\label{NLCSGammadelta}
      \left( Curl \circ Curl - \alpha(\cdot) \, I \right) \Gamma^{\delta} &=&  \frac{\left( \epsilon_{\infty} - \epsilon_{0} (z) \right)}{\epsilon_{\infty}}  \underset{z}{\delta}(\cdot) \, I, \quad \text{in} \quad \mathbb{R}^{3}. 
\end{eqnarray}
For the first sub-problem, to analyze the regularity of $W_{1}$, we start by the following lemma.  
\begin{lemma}\label{IYLASS}
There exists one and only one solution $W_{1}$ of: 
\begin{equation}\label{KdrJap}
Curl \circ Curl (W_{1}) - W_{1} = Curl(f), \quad \text{in} \quad \mathbb{R}^{3},
\end{equation}
satisfying the Silver-M\"{u}ller radiation condition
\begin{equation}\label{SMRC}
\underset{\left\vert x \right\vert \rightarrow + \infty}{\lim} \,  \left\vert x \right\vert \, \left( \nabla \times W_{1}(x) \times \frac{x}{\left\vert x \right\vert} - i  \, W_{1}(x) \right) = 0.
\end{equation}
In addition it is in $\mathbb{L}_{Loc}^{\frac{3(3-2\delta)}{(3+2\delta)}}(\mathbb{R}^{3}).$
\end{lemma}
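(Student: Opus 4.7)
The plan is to build $W_1$ explicitly via the free-space Maxwell Green's tensor $\Upsilon_1$ of wave number $1$. Recall from $(\ref{equa3})$ that $\Upsilon_1(x,y) = \Phi_1(x,y) I + \nabla_x \div_x\bigl(\Phi_1(x,y) I\bigr)$ solves $(Curl\circ Curl - I)\Upsilon_1(\cdot,y) = \delta_y \cdot I$ together with the Silver-M\"uller radiation condition $(\ref{SMRC})$. Since $Curl(f) \in (\mathbb{H}_0(Curl,q))'$ by Lemma \ref{SE} and has compact support in $\Omega$, the natural candidate is
\begin{equation*}
W_1(x) := \bigl\langle \Upsilon_1(x,\cdot) , Curl(f)(\cdot) \bigr\rangle,
\end{equation*}
where the pairing is between $(\mathbb{H}_0(Curl,q))'$ and $\mathbb{H}_0(Curl,q)$. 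That $W_1$ formally solves $(\ref{KdrJap})$ and inherits the Silver-M\"uller condition from $\Upsilon_1$ is then clear; the content of the lemma lies in showing that this formula defines a locally $\mathbb{L}^{3(3-2\delta)/(3+2\delta)}$ function.

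The key step I would carry out is to transfer the curl from $f$ onto the kernel by integration by parts. Using that $f$ has compact support and approximating by $\mathcal{C}^\infty_0$ elements, one obtains
\begin{equation*}
W_1(x) = -\int_{\Omega} Curl_y\bigl(\Upsilon_1(x,y)\bigr) \cdot f(y)\, dy.
\end{equation*}
The crucial observation is that $Curl_y$ annihilates the $\nabla_y\div_y(\Phi_1(x,y) I)$ piece, since the curl of a gradient vanishes columnwise. Therefore only $Curl_y(\Phi_1(x,y) I)$ survives, and this kernel has singularity of order $|x-y|^{-2}$ instead of the $|x-y|^{-3}$ singularity of $\Upsilon_1$ itself. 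In other words,
\begin{equation*}
W_1(x) = -\int_{\Omega} \bigl[\nabla_y \Phi_1(x,y)\bigr] \times f(y)\, dy \;+\; \text{smooth lower order},
\end{equation*}
so $W_1$ is, up to smooth remainders, the convolution of $f \in \mathbb{L}^p$ with a Riesz-type kernel of homogeneity $-2$.

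With this representation in hand, the integrability claim reduces to the Hardy-Littlewood-Sobolev inequality: convolution with a kernel of order $|x|^{-2}$ maps $\mathbb{L}^p(\Omega)$ continuously into $\mathbb{L}^r_{\mathrm{loc}}(\mathbb{R}^3)$ provided $\frac{1}{r} = \frac{1}{p} - \frac{1}{3}$. For $p = \tfrac{3}{2} - \delta$ a direct computation gives
\begin{equation*}
\frac{1}{r} = \frac{2}{3-2\delta} - \frac{1}{3} = \frac{3+2\delta}{3(3-2\delta)},
\end{equation*}
hence $r = \tfrac{3(3-2\delta)}{3+2\delta}$, exactly the exponent stated in the lemma. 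The smooth tail (coming from the bounded factor $e^{i|x-y|}$ and from the cancelled $\nabla\div$ term after integration by parts into the smoother pieces) contributes only lower-order terms in any local $\mathbb{L}^r$-norm.

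For uniqueness I would argue as follows. Any difference $V$ of two solutions satisfies $Curl\, Curl\, V - V = 0$ in $\mathbb{R}^3$ together with the Silver-M\"uller condition. By elliptic regularity away from any compact set, $V$ is smooth at infinity and is an outgoing Maxwell field at wave number $1$ in a neighborhood of infinity; Rellich's lemma for the Maxwell system then forces $V$ to vanish outside a large ball, and unique continuation propagates this to $V \equiv 0$. The main obstacle in the whole argument is the rigorous justification of the integration by parts, since $Curl(f)$ is only a distribution in $(\mathbb{H}_0(Curl,q))'$; this is settled by a density argument, exploiting that $Curl_y \Upsilon_1(x,\cdot)$, though singular at $y=x$, belongs to $\mathbb{H}_0(Curl,q)$ locally in $y$ thanks to the improved singularity of order $|x-y|^{-2}$ (integrable against an $\mathbb{L}^p$ function for $p<3/2$).
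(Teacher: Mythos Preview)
Your argument is correct and arrives at the same explicit representation as the paper, namely $W_1 = Curl\, N^{1}(f)$: your integration-by-parts formula $W_1(x) = -\int_\Omega \nabla_y\Phi_1(x,y)\times f(y)\,dy$ is exactly $Curl\, N^{1}(f)(x)$ once one uses $\nabla_y\Phi_1=-\nabla_x\Phi_1$. The paper reaches this formula by a different first step: it takes the divergence of $(\ref{KdrJap})$ to obtain $\div W_1=0$, then rewrites the equation as the vectorial Helmholtz system $\Delta W_1 + W_1 = -Curl(f)$ and reads off the solution via the Newtonian potential $N^1$. Your route through the Maxwell Green's tensor and the cancellation $Curl_y\bigl(\nabla_y\div_y(\Phi_1 I)\bigr)=0$ is equally valid and conceptually appealing.

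The genuine methodological difference is in the regularity step. The paper establishes the stronger statement $W_1 \in \mathbb{W}^{1,p}_{Loc}$ with $p=\tfrac{3}{2}-\delta$ (using Calder\'on--Zygmund for $\nabla\, Curl\, N^1$) and then invokes the Sobolev embedding $\mathbb{W}^{1,3/2-\delta}_{Loc}\hookrightarrow \mathbb{L}^{3(3-2\delta)/(3+2\delta)}_{Loc}$; you instead apply Hardy--Littlewood--Sobolev directly to the convolution with the $|x-y|^{-2}$ kernel and land on the same exponent in one step. Your computation $1/r = 1/p - 1/3$ is correct. The paper's two-step route yields slightly more information ($W_1\in\mathbb{W}^{1,p}_{Loc}$, plus $Curl\,W_1\in\mathbb{L}^p_{Loc}$), though only the $\mathbb{L}^r_{Loc}$ membership is claimed in the lemma. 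For uniqueness, the paper reduces to the componentwise Sommerfeld condition (citing the equivalence with Silver--M\"uller from \cite{colton2019inverse}) rather than invoking Rellich and unique continuation as you do; both are standard.
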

\begin{proof}  
By taking the divergence on both sides of $(\ref{KdrJap})$ we deduce that $\div\left( W_{1} \right) = 0$, in $\mathbb{R}^{3}$, and then $(\ref{KdrJap})$ will be reduced to the following vectorial Helmholtz system:
\begin{equation*}\label{VHE}
\Delta W_{1} + W_{1}  = - Curl\left( f \right), \qquad \text{in} \quad \mathbb{R}^{3}, 
\end{equation*}  
and each component of $W_1$ satisfies the Sommerfeld radiation condition.
The solution to this problem is unique and it is given by:
\begin{equation}\label{equa448}
W_{1} =  Curl N^{1}\left( f \right),
\end{equation}
where $N^{1}(\cdot)$ is the operator defined by $(\ref{DefNDefMk})$. Using the continuity of the operator $Curl N^1(\cdot)$, see Theorem 1 of \cite{CZ}, 
we get:
\begin{equation}\label{normWLpR3}
\left\Vert W_{1} \right\Vert_{\mathbb{L}_{Loc}^{p}(\mathbb{R}^{3})} \leq C(1,p) \,\, \left\Vert f \right\Vert_{\mathbb{L}^{p}(\Omega)}.
\end{equation}
This proves that $W_{1} \in \mathbb{L}_{Loc}^{p}(\mathbb{R}^{3}), \, p=\frac{3}{2} - \delta$. Now taking the Curl operator on both sides of $(\ref{equa448})$, we get 
\begin{equation*}
Curl (W_{1}) =  Curl \circ Curl N^{1}\left( f \right) =(- \Delta + \nabla \div) N^{1}\left( f \right)  =  N^{1}\left( f \right) + f + \nabla \div N^{1}\left( f \right).
\end{equation*}
Then,  
\begin{equation*}
\left\Vert Curl (W_{1}) \right\Vert_{\mathbb{L}_{Loc}^{p}(\mathbb{R}^{3})}  \leq  \left\Vert N^{1}\left( f \right) \right\Vert_{\mathbb{L}_{Loc}^{p}(\mathbb{R}^{3})} + \left\Vert f \right\Vert_{\mathbb{L}^{p}(\mathbb{R}^{3})} + \left\Vert \nabla \div N^{1}\left( f \right)\right\Vert_{\mathbb{L}_{Loc}^{p}(\mathbb{R}^{3})}.
\end{equation*}
Now, using the Calderon-Zygmund inequality, see Theorem 9.9 of \cite{gilbarg2001elliptic}, and the continuity of the Newtonian potential operator $N(\cdot)$, see for instance Theorem 1, page 132, of \cite{Stein}, we obtain
\begin{equation*}
\left\Vert Curl (W_{1}) \right\Vert_{\mathbb{L}_{Loc}^{p}(\mathbb{R}^{3})} \leq  (C(0,p)+1+C(2,p)) \, \left\Vert f \right\Vert_{\mathbb{L}^{p}(\Omega)}.
\end{equation*}
This proves that $Curl \left( W_{1} \right) \in \mathbb{L}_{Loc}^{p}(\mathbb{R}^{3}), \, p=\frac{3}{2} - \delta$. 
More regularity for $W_{1}$ can be obtained by taking the gradient operator on both sides of $(\ref{equa448})$ to get:
\begin{equation*}
\nabla W_{1} = \nabla Curl N^{1}\left( f \right),
\end{equation*}
and then
\begin{equation}\label{G2}
\left\Vert \nabla W_{1} \right\Vert_{\mathbb{L}_{Loc}^{p}(\mathbb{R}^{3})} = \left\Vert \nabla Curl N^{1}\left( f \right)\right\Vert_{\mathbb{L}_{Loc}^{p}(\mathbb{R}^{3})} < \left\Vert Hess N^{1}\left( f \right)\right\Vert_{\mathbb{L}_{Loc}^{p}(\mathbb{R}^{3})} \leq C(2,p) \,\, \left\Vert  f \right\Vert_{\mathbb{L}^{p}(\Omega)},
\end{equation}
which proves, by gathering  $(\ref{normWLpR3})$ with $(\ref{G2})$, that $W_{1} \in \mathbb{W}_{Loc}^{1,p}(\mathbb{R}^{3}), \, p=\frac{3}{2}-\delta$. At this stage we use Sobolev embedding theorem to get the following injection 
\begin{equation*}
\mathbb{W}_{Loc}^{1,\frac{3}{2}-\delta}(\mathbb{R}^{3}) \hookrightarrow \mathbb{L}_{Loc}^{\frac{3(3-2\delta)}{(3+2\delta)}}(\mathbb{R}^{3}).
\end{equation*}  
Finally, 
\begin{equation*}
W_{1} \in \mathbb{L}_{Loc}^{\frac{3(3-2\delta)}{(3+2\delta)}}(\mathbb{R}^{3}).
\end{equation*}
As we have equivalence between the Silver-M\"{u}ller radiation condition for $W_{1}$ and the Sommerfeld radiation condition for the Cartesian component of $W_{1}$, see for instance \cite{colton2019inverse}, Theorem 6.8, then $W_1$ solves the problem (\ref{KdrJap})-(\ref{SMRC}). Finally, we have uniqueness of the problem (\ref{KdrJap})-(\ref{SMRC}) with standard arguments, see \cite{colton2019inverse}.
\end{proof} 
The next lemma extends the previous result to the perturbed problem case. 
\begin{lemma}\label{DjHCV19}
For $\alpha(\cdot) \in \mathcal{C}^{1}(\mathbb{R}^{3})$,  constant outside a bounded domain and such that $\Im\left(\alpha(\cdot) \right)>0$ the solution $W_{1}$ to 
\begin{equation}\label{KdrJapCS}
Curl \circ Curl (W_{1}) - \alpha(\cdot) \, W_{1} = Curl(f), \quad \text{in} \quad \mathbb{R}^{3},
\end{equation}
satisfying the Silver-M\"{u}ller radiation condition
\begin{equation*}
\underset{\left\vert x \right\vert \rightarrow + \infty}{\lim} \,  \left\vert x \right\vert \, \left( \nabla \times W_{1}(x) \times \frac{x}{\left\vert x \right\vert} - i \, \sqrt{\alpha_{\infty}} \, W_{1}(x) \right) = 0,
\end{equation*}
is  in $ \mathbb{L}^{\frac{3(3-2\delta)}{(3+2\delta)}}(\Omega)$, where $\Omega$ is the support of the function $f$. 
\end{lemma}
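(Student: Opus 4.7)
The plan is to reduce \eqref{KdrJapCS} to a Lippmann--Schwinger type integral equation involving the constant--coefficient Green tensor $\Upsilon_{k}(\cdot,\cdot)$ with $k=\sqrt{\alpha_{\infty}}$, and then invert it by a compactness plus uniqueness argument. First, since $\alpha(\cdot)-\alpha_{\infty}$ is supported in the fixed bounded domain $\Omega$, we rewrite \eqref{KdrJapCS} as
\begin{equation*}
Curl\circ Curl(W_{1})-\alpha_{\infty}\,W_{1}=Curl(f)+(\alpha(\cdot)-\alpha_{\infty})\,W_{1}\qquad\text{in }\mathbb{R}^{3},
\end{equation*}
keeping the Silver--M\"uller condition for the wave number $\sqrt{\alpha_{\infty}}$. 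The trivial rescaling of \textbf{Lemma} \ref{IYLASS} then provides a bounded outgoing solution operator $\mathcal{S}$ for the constant--coefficient left hand side. Applying $\mathcal{S}$ to both sides produces the integral equation
\begin{equation*}
W_{1}(x)-\int_{\Omega}\Upsilon_{k}(x,y)\,(\alpha(y)-\alpha_{\infty})\,W_{1}(y)\,dy=\mathcal{S}(Curl\,f)(x)=:V_{0}(x),\qquad x\in\mathbb{R}^{3},
\end{equation*}
where by \textbf{Lemma} \ref{IYLASS} and \textbf{Lemma} \ref{SE} the forcing $V_{0}$ is in $\mathbb{L}^{\frac{3(3-2\delta)}{(3+2\delta)}}_{Loc}(\mathbb{R}^{3})$.

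Next I would set $p^{\star}:=\frac{3(3-2\delta)}{(3+2\delta)}$ and study the operator $T\colon\mathbb{L}^{p^{\star}}(\Omega)\to\mathbb{L}^{p^{\star}}(\Omega)$ defined by
\begin{equation*}
T(u)(x):=\int_{\Omega}\Upsilon_{k}(x,y)\,(\alpha(y)-\alpha_{\infty})\,u(y)\,dy.
\end{equation*}
Because $\Upsilon_{k}(x,y)=\Phi_{k}(x,y)I+\alpha_{\infty}^{-1}\nabla\,\text{div}(\Phi_{k}(x,y)I)$, its singularity is of Calder\'on--Zygmund type of order $|x-y|^{-3}$ plus a locally $|x-y|^{-1}$ integrable remainder. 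Combining the Calder\'on--Zygmund continuity and the Hardy--Littlewood--Sobolev inequality with the compact support of $(\alpha-\alpha_{\infty})$ in $\Omega$ and the $\mathcal{C}^{1}$ regularity of $\alpha$, the standard Rellich--Kondrachov type argument adapted to the Maxwell integral kernel yields compactness of $T$ on $\mathbb{L}^{p^{\star}}(\Omega)$; essentially, one gains a derivative through the convolution and then loses compactness of the embedding only at the boundary of $\Omega$, which is absorbed by the cut--off coming from the support of $\alpha-\alpha_{\infty}$.

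The hardest step, as usual in this kind of argument, will be the uniqueness (injectivity) of $I-T$, which is needed to apply the Fredholm alternative; this is where the hypothesis $\Im(\alpha)>0$ enters decisively. I would proceed by showing that any $u\in\mathbb{L}^{p^{\star}}(\Omega)$ with $(I-T)u=0$ extends, via $\mathcal{S}((\alpha-\alpha_{\infty})u)$, to a global Silver--M\"uller outgoing distributional solution $W$ of the homogeneous equation $Curl\circ Curl\, W-\alpha(\cdot)W=0$ in $\mathbb{R}^{3}$. Multiplying by $\overline{W}$, integrating over a large ball and passing to the limit, the radiation condition together with Rellich's lemma forces $W=0$ outside $\Omega$, and then the absorption $\Im(\alpha)>0$ inside $\Omega$ combined with unique continuation forces $W\equiv 0$, hence $u\equiv 0$. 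Fredholm then gives a unique solution $W_{1}\in\mathbb{L}^{p^{\star}}(\Omega)$ of the Lippmann--Schwinger equation, which is the one we look for.

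Finally, having $W_{1}\in\mathbb{L}^{p^{\star}}(\Omega)$, the identity $W_{1}=V_{0}+T(W_{1})$ confirms the claimed integrability since $V_{0}\in\mathbb{L}^{p^{\star}}_{Loc}$ and $T(W_{1})\in\mathbb{L}^{p^{\star}}(\Omega)$ by the mapping property of $T$. The main obstacle is clearly the uniqueness step, i.e.\ ruling out non-trivial outgoing solutions of the homogeneous system at the low regularity $\mathbb{L}^{p^{\star}}$ with $p^{\star}<2$; the dissipation $\Im(\alpha)>0$ is the key ingredient that allows one to bypass the more delicate Rellich/unique continuation machinery that would be required in the self-adjoint case.
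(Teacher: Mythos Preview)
Your reduction to a Lippmann--Schwinger equation is the right first move, and the uniqueness step via $\Im(\alpha)>0$ and Rellich is fine. The genuine gap is the compactness claim for $T$ on $\mathbb{L}^{p^{\star}}(\Omega)$. The dominant part of $\Upsilon_{k}$ is $\alpha_{\infty}^{-1}\nabla\,\div(\Phi_{k}\,I)$, which is a Calder\'on--Zygmund operator of order zero: it is \emph{bounded} on $\mathbb{L}^{p}$ but \emph{not compact}, regardless of whether the multiplier $\alpha-\alpha_{\infty}$ is $\mathcal{C}^{1}$ and compactly supported. Your sentence ``one gains a derivative through the convolution'' applies to the Newtonian part $N^{k}$ but not to $\nabla M^{k}$; no smoothing is gained there, so Rellich--Kondrachov does not apply and the Fredholm alternative is not available as stated.

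The paper avoids this obstruction by a different mechanism. It writes the same integral equation but works first in $\mathbb{L}^{p}(\Omega)$ with $p=\frac{3}{2}-\delta$ and inverts $I-T$ by a \emph{contraction} (Neumann series) argument under the smallness hypothesis \eqref{Cdtalpha}, namely $\big(\frac{C(2,p)}{|\alpha_{\infty}|}+C(0,p)\big)\|\alpha-\alpha_{\infty}\|_{\mathbb{L}^{\infty}(\Omega)}<1$. Once $W_{1}\in\mathbb{L}^{p}(\Omega)$ is secured, the paper shows $Curl(W_{1}),\,\div(W_{1})\in\mathbb{L}^{p}(\Omega)$ (the latter via $\div(\alpha W_{1})=0$ and the $\mathcal{C}^{1}$ regularity of $\alpha$), then invokes the $\mathbb{L}^{p}$ div--curl regularity result of Amrouche--Seloula to obtain $W_{1}\in\mathbb{W}^{1,p}$ on a large ball, and finally uses Sobolev embedding $\mathbb{W}^{1,\frac{3}{2}-\delta}\hookrightarrow\mathbb{L}^{\frac{3(3-2\delta)}{3+2\delta}}$ to reach the target space. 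So the route is contraction at low integrability followed by a bootstrap, not Fredholm at the target integrability.
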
 
\begin{proof} 
Without loss of generalities, we set:
\begin{equation}\label{alphaCte}
\alpha_{\infty} := \alpha(\cdot)_{\displaystyle|_{\mathbb{R}^{3} \setminus \Omega}},
\end{equation}
where $\Omega$ is the support of the function $f$ and we rewrite $(\ref{KdrJap})$ as 
\begin{equation*}
       Curl \circ Curl \left( W_{1} \right) - \alpha_{\infty} W_{1}  =  Curl\left( f \right) + \left(\alpha(\cdot) - \alpha_{\infty}  \right) W_{1}.
\end{equation*}
As already done in the unperturbed problem case, see Lemma \ref{IYLASS}, we represent the solution $W_{1}$ as 
\begin{equation}\label{PMMS}
W_{1} = Curl N^{\sqrt{\alpha_{\infty}}}(f) - \frac{1}{\alpha_{\infty}} \nabla M^{\sqrt{\alpha_{\infty}}}\left( \left( \alpha(\cdot) - \alpha_{\infty} \right) W_{1} \right) + N^{\sqrt{\alpha_{\infty}}}\left( \left(  \alpha(\cdot)-\alpha_{\infty} \right) W_{1} \right), \quad \text{in} \quad \mathbb{R}^{3}.
\end{equation}
As the support of $\left( \alpha(\cdot) - \alpha_{\infty} \right)$ is $\Omega$, we can see that the value of $W_{1}$ in $\mathbb{R}^{3}$ is given by its value in $\Omega$. So we need to prove that, when restricted to $\Omega$, the solution $W_{1}$ is well defined. It is natural to look for solutions of $(\ref{PMMS})$ in the $\mathbb{L}^p(\Omega)$-spaces. For this, taking the $\mathbb{L}^{p}(\Omega)$-norm on both sides of (\ref{PMMS}) and using the triangular inequality, we obtain  
\begin{eqnarray*}
\left\Vert W_{1} \right\Vert_{\mathbb{L}^{p}(\Omega)} & \leq & \left\Vert Curl N^{\sqrt{\alpha_{\infty}}}(f) \right\Vert_{\mathbb{L}^{p}(\Omega)} + \frac{1}{\left\vert \alpha_{\infty} \right\vert} \left\Vert \nabla \nabla N^{\sqrt{\alpha_{\infty}}}\left( \left( \alpha_{\infty} -\alpha(\cdot) \right) W_{1} \right) \right\Vert_{\mathbb{L}^{p}(\Omega)} \\ &+& \left\Vert N^{\sqrt{\alpha_{\infty}}}\left( \left( \alpha_{\infty} -\alpha(\cdot) \right) W_{1} \right)\right\Vert_{\mathbb{L}^{p}(\Omega)} \\
\left\Vert W_{1} \right\Vert_{\mathbb{L}^{p}(\Omega)} & \leq & C(1,p) \, \left\Vert f \right\Vert_{\mathbb{L}^{p}(\Omega)} + \left( \frac{C(2,p)}{\left\vert \alpha_{\infty} \right\vert} + C(0,p) \right) \left\Vert  \left( \alpha_{\infty} -\alpha(\cdot) \right)   \right\Vert_{\mathbb{L}^{\infty}(\Omega)} \, \left\Vert  W_{1}  \right\Vert_{\mathbb{L}^{p}(\Omega)}.
\end{eqnarray*}
Now, for $p = \frac{3}{2} - \delta$, we assume  that: 
\begin{equation}\label{Cdtalpha}
1 > \left( \frac{C(2,p)}{\left\vert \alpha_{\infty} \right\vert}  + C(0,p) \right) \left\Vert  \left( \alpha_{\infty} -\alpha(\cdot) \right)  \right\Vert_{\mathbb{L}^{\infty}(\Omega)}.
\end{equation}
Then, under this condition,  we end up with:
\begin{equation}\label{Wfctf}
\left\Vert W_{1} \right\Vert_{\mathbb{L}^{p}(\Omega)}  \leq  \frac{C(1,p)}{\left[1 - \left( \frac{C(2,p)}{\left\vert \alpha_{\infty} \right\vert}  + C(0,p) \right) \left\Vert  \left( \alpha_{\infty} -\alpha(\cdot) \right)  \right\Vert_{\mathbb{L}^{\infty}(\Omega)} \right]} \, \left\Vert f \right\Vert_{\mathbb{L}^{p}(\Omega)}.
\end{equation}
This proves that, under the condition $(\ref{Cdtalpha})$, the equation $(\ref{PMMS})$ is invertible in $\mathbb{L}^p(\Omega)$. Therefore, again from $(\ref{PMMS})$, we see that $W_1$ is well defined in $\mathbb{R}^{3}$ and satisfies the Silver-M\"{u}ller radiation condition. Hence $W_1$ in $(\ref{PMMS})$ solves $(\ref{KdrJapCS})$. The uniqueness of the solution of the problem $(\ref{KdrJapCS})$ is known under the condition $\Im\left( \alpha(\cdot) \right) > 0$, see for instance Theorem 9.4 in \cite{colton2019inverse}. 
\bigskip

Let us now examine the regularity of this solution. For this, taking the $Curl$ operator on both sides of $(\ref{PMMS})$ to obtain: 
\begin{equation*}
Curl \left( W_{1} \right) = Curl \circ Curl N^{\sqrt{\alpha_{\infty}}}(f) - Curl N^{\sqrt{\alpha_{\infty}}}\left( \left( \alpha_{\infty} -\alpha(\cdot) \right) W_{1} \right) \end{equation*}
and by the use of the relation $Curl \circ Curl (\cdot) = - \Delta (\cdot) + \nabla \nabla \cdot(\cdot)$ we obtain: 
\begin{equation*}
Curl \left( W_{1} \right) = \alpha_{\infty} N^{\sqrt{\alpha_{\infty}}}(f)  + f + \nabla \div N^{\sqrt{\alpha_{\infty}}}(f) - Curl N^{\sqrt{\alpha_{\infty}}}\left( \left( \alpha_{\infty} -\alpha(\cdot) \right) W_{1} \right).
\end{equation*}
Hence, 
\begin{eqnarray*}\label{CurlPMMS}
\left\Vert Curl \left( W_{1} \right) \right\Vert_{\mathbb{L}^{p}(\Omega)} & \leq & \left\vert \alpha_{\infty} \right\vert \,\, \left\Vert N^{\sqrt{\alpha_{\infty}}}(f) \right\Vert_{\mathbb{L}^{p}(\Omega)} + \left\Vert f \right\Vert_{\mathbb{L}^{p}(\Omega)} + \left\Vert \nabla \div N^{\sqrt{\alpha_{\infty}}}(f) \right\Vert_{\mathbb{L}^{p}(\Omega)} \\ &+& \left\Vert Curl N^{\sqrt{\alpha_{\infty}}}\left( \left( \alpha_{\infty} -\alpha(\cdot) \right) W_{1} \right) \right\Vert_{\mathbb{L}^{p}(\Omega)} \\
\left\Vert Curl \left( W_{1} \right) \right\Vert_{\mathbb{L}^{p}(\Omega)} & \leq & \left[ \left\vert \alpha_{\infty} \right\vert \,C(0,p)  + 1 + C(2,p) \right] \, \left\Vert f \right\Vert_{\mathbb{L}^{p}(\Omega)} + C(1,p) \left\Vert  \left( \alpha_{\infty} -\alpha(\cdot) \right) \right\Vert_{\mathbb{L}^{\infty}(\Omega)} \, \left\Vert W_{1}  \right\Vert_{\mathbb{L}^{p}(\Omega)}.
\end{eqnarray*}
This last inequality combined with $(\ref{Wfctf})$ give us: 
\begin{eqnarray}\label{CurlWfctf}
\nonumber
\left\Vert Curl \left( W_{1} \right) \right\Vert_{\mathbb{L}^{p}(\Omega)} & \leq & \Bigg[ \left\vert \alpha_{\infty} \right\vert \,C(0,p)  + 1 + C(2,p) + \\ &&   \frac{(C(1,p))^{2} \, \left\Vert  \left( \alpha_{\infty} -\alpha(\cdot) \right) \right\Vert_{\mathbb{L}^{\infty}(\Omega)}}{\left[1 - \left( \frac{C(2,p)}{\left\vert \alpha_{\infty} \right\vert}  + C(0,p) \right) \left\Vert  \left( \alpha_{\infty} -\alpha(\cdot) \right)  \right\Vert_{\mathbb{L}^{\infty}(\Omega)} \right]} \Bigg] \left\Vert f \right\Vert_{\mathbb{L}^{p}(\Omega)}.
\end{eqnarray}
This proves that $Curl \left( W_{1} \right) \in \mathbb{L}^{p}(\Omega), \, p = \frac{3}{2} - \delta$. In addition, by taking the divergence operator on both sides of $(\ref{PMMS})$ we get $\div\left( \alpha(\cdot) \, W_{1} \right) = 0,$ in $\mathbb{R}^{3}$,
which, under the condition $\alpha(\cdot)_{\displaystyle|_{\Omega}} \neq 0$, is equivalent to 
\begin{equation*}
\div\left( W_{1} \right) = - \, \alpha(\cdot)^{-1} \, W_{1} \cdot \nabla \alpha(\cdot).
\end{equation*} 
From the last equality and the constancy of $\alpha(\cdot)$ outside $\Omega$, see $(\ref{alphaCte})$, we deduce that:
\begin{equation}\label{LRRA} 
\div\left( W_{1} \right) \in \mathbb{L}^{p}(\mathbb{R}^{3}),\,\, p = \frac{3}{2}-\delta \quad \text{and} \quad Supp\left( \div (W_{1}) \right) \subseteq \Omega.
\end{equation}
Now, set $\phi \in \mathcal{C}^\infty(\mathbb{R}^{3})$ such that: 
\begin{align*}
  \begin{cases}
      \phi = 1 & \text{in $\Omega$,} \\
      0 \leq \phi \leq 1 & \text{in $B(0,R) \setminus \Omega$,} \\
      \phi = 0 & \text{in $\mathbb{R}^{3} \setminus B(0,R)$,}
    \end{cases}
\end{align*}
where $B(0,R)$ is a ball of center $0$ and a large  radius $R$ containing the domain $\Omega$. Clearly, from $(\ref{Wfctf})$, the vector field $\phi \, W_{1} \in \mathbb{L}^{p}(B(0,R))$ with $p = \frac{3}{2} - \delta$. Also, we have 
\begin{eqnarray*}
Curl\left( \phi \, W_{1} \right) &=& \nabla \phi \times W_{1} + \phi \, Curl\left( W_{1} \right) \\
\left\Vert Curl\left( \phi \, W_{1} \right) \right\Vert_{\mathbb{L}^{p}(B(0,R))}  & \leq & \sqrt{2} \, \left\Vert \nabla \phi \right\Vert_{\mathbb{L}^{\infty}(B(0,R))} \, \left\Vert W_{1} \right\Vert_{\mathbb{L}^{p}(B(0,R))} +  \left\Vert Curl\left( W_{1} \right) \right\Vert_{\mathbb{L}^{p}(B(0,R))},
\end{eqnarray*}  
then, from $(\ref{Wfctf})$ and $(\ref{CurlWfctf})$   , we deduce that $Curl\left( \phi \, W_{1} \right) \in \mathbb{L}^{p}(B(0,R))$ with $p = \frac{3}{2} - \delta$. In a similar manner, we have: 
\begin{eqnarray*}
\div\left( \phi \, W_{1} \right) &=& W_{1} \cdot \nabla \phi  + \phi \, \div\left( W_{1} \right) \\
\left\Vert \div\left( \phi \, W_{1} \right) \right\Vert_{\mathbb{L}^{p}(B(0,R))}  & \leq &  \left\Vert \nabla \phi \right\Vert_{\mathbb{L}^{\infty}(B(0,R))} \, \left\Vert W_{1} \right\Vert_{\mathbb{L}^{p}(B(0,R))} +  \left\Vert \div\left( W_{1} \right) \right\Vert_{\mathbb{L}^{p}(B(0,R))},
\end{eqnarray*}
then, from $(\ref{Wfctf})$ and $(\ref{LRRA})$, we deduce that $\div\left( \phi \, W_{1} \right) \in \mathbb{L}^{p}(B(0,R))$ with $p = \frac{3}{2} - \delta$. \\
At this stage we have $\phi \, W_{1}, Curl(\phi \, W_{1})$ and $\div(\phi \, W_{1})$ are elements in the space $\mathbb{L}^{p}\left(B(0,R) \right),\, p=\frac{3}{2}-\delta,$ and $\nu \times \phi \, W_{1} = 0$ on the boundary $\partial B(0,R)$. This is sufficient to justify, for reference see \cite{Amrouche-Houda}, that $\phi \, W_{1} \in \mathbb{W}^{1,p}\left(B(0,R) \right),\, p=\frac{3}{2}-\delta$. Using embedding results for Sobolev spaces, we obtain: 
\begin{equation*}
\phi \, W_{1} \in \mathbb{W}^{1,\frac{3}{2}-\delta}\left(B(0,R) \right) \lhook\joinrel\xrightarrow{\text{continously}} \mathbb{L}^{\frac{3(3-2\delta)}{(3+2\delta)}}\left(B(0,R) \right). 
\end{equation*}
Finally, by restriction to the domain $\Omega$, we get $W_{1} \in \mathbb{L}^{\frac{3(3-2\delta)}{(3+2\delta)}}(\Omega).$
\end{proof}
Let us now study  the existence and regularity of $W_{2}$, solution of $(\ref{NLCSW2})$. 
\begin{lemma}
For $\alpha(\cdot) \in \mathcal{C}(\mathbb{R}^{3})$,  constant outside a bounded domain and such that $\Im\left(\alpha(\cdot) \right)>0$ there exists one and only one solution $W_{2}$ of 
\begin{equation*}
 \left( Curl \circ Curl - \alpha(\cdot) \, I \right) W_{2} = g,  \,\,  \text{in} \quad \mathbb{R}^{3},
\end{equation*}
satisfying the Silver-M\"{u}ller radiation condition
\begin{equation*}
\underset{\left\vert x \right\vert \rightarrow + \infty}{\lim} \,  \left\vert x \right\vert \, \left( \nabla \times W_{2}(x) \times \frac{x}{\left\vert x \right\vert} - i \, \sqrt{\alpha_{\infty}} \, W_{2}(x) \right) = 0.
\end{equation*}
In addition, it is in $\mathbb{H}(Curl,p,\Omega)$, with $p = \frac{3}{2} - \delta$. 
\end{lemma}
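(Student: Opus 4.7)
My plan is to mirror the strategy used in Lemma \ref{DjHCV19} for $W_1$, but since the source is now a function $g \in \mathbb{L}^p(\Omega)$ (with $p = 3/2 - \delta$) rather than $Curl(f)$, the representation and the regularity analysis are actually simpler on the $Curl$ side.

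First, I would rewrite the equation with the constant reference coefficient, namely
\begin{equation*}
Curl \circ Curl(W_2) - \alpha_\infty W_2 = g + (\alpha(\cdot) - \alpha_\infty) W_2, \quad \text{in } \mathbb{R}^3,
\end{equation*}
and invoke the free-space Green's tensor $\Upsilon_k$ from $(\ref{equa2})$--$(\ref{equa3})$, with $k = \sqrt{\alpha_\infty}$, to obtain the Lippmann--Schwinger representation
\begin{equation*}
W_2 = N^{\sqrt{\alpha_\infty}}(h) + \tfrac{1}{\alpha_\infty}\,\nabla \div N^{\sqrt{\alpha_\infty}}(h), \qquad h := g + (\alpha(\cdot) - \alpha_\infty)\,W_2,
\end{equation*}
with each column of $W_2$ inheriting the Silver--M\"uller radiation condition from $\Upsilon_k$. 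Taking $\mathbb{L}^p(\Omega)$-norms, applying the continuity of the Newtonian potential $N^{\sqrt{\alpha_\infty}}:\mathbb{L}^p(\Omega)\to \mathbb{L}^p(\Omega)$ and the Calder\'on--Zygmund bound for $\nabla\div\, N^{\sqrt{\alpha_\infty}}$, exactly as in the proof of Lemma \ref{DjHCV19}, we get
\begin{equation*}
\|W_2\|_{\mathbb{L}^p(\Omega)} \le \Bigl(C(0,p)+\tfrac{C(2,p)}{|\alpha_\infty|}\Bigr)\|g\|_{\mathbb{L}^p(\Omega)} + \Bigl(C(0,p)+\tfrac{C(2,p)}{|\alpha_\infty|}\Bigr)\|\alpha(\cdot)-\alpha_\infty\|_{\mathbb{L}^\infty(\Omega)}\,\|W_2\|_{\mathbb{L}^p(\Omega)}.
\end{equation*}
Under the very same smallness condition $(\ref{Cdtalpha})$, the contraction/Neumann series argument gives invertibility of the integral equation in $\mathbb{L}^p(\Omega)$ and the a priori bound $\|W_2\|_{\mathbb{L}^p(\Omega)} \lesssim \|g\|_{\mathbb{L}^p(\Omega)}$.

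For the $\mathbb{H}(Curl,p,\Omega)$ regularity, the key simplifying observation is that $Curl\nabla = 0$. Applying $Curl$ to the representation above eliminates the singular Hessian-type term, leaving
\begin{equation*}
Curl(W_2) = Curl\, N^{\sqrt{\alpha_\infty}}\bigl(g + (\alpha(\cdot)-\alpha_\infty) W_2\bigr),
\end{equation*}
which, by the $\mathbb{L}^p\to\mathbb{L}^p$ continuity of $Curl\,N^{\sqrt{\alpha_\infty}}$ from \cite{CZ} used in Lemma \ref{IYLASS}, immediately yields
\begin{equation*}
\|Curl(W_2)\|_{\mathbb{L}^p(\Omega)} \le C(1,p)\Bigl(\|g\|_{\mathbb{L}^p(\Omega)} + \|\alpha(\cdot)-\alpha_\infty\|_{\mathbb{L}^\infty(\Omega)}\|W_2\|_{\mathbb{L}^p(\Omega)}\Bigr) \lesssim \|g\|_{\mathbb{L}^p(\Omega)}.
\end{equation*}
Hence $W_2\in\mathbb{H}(Curl,p,\Omega)$.

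Finally, uniqueness follows from the standard Rellich-type argument for Maxwell systems under the assumption $\Im(\alpha(\cdot))>0$, exactly as invoked at the end of Lemma \ref{DjHCV19} via \cite[Theorem~9.4]{colton2019inverse}. The only genuinely delicate point I expect is the careful justification that $\nabla\div\, N^{\sqrt{\alpha_\infty}}$ acts boundedly on $\mathbb{L}^p(\Omega)$ for the fractional exponent $p=3/2-\delta$; this is precisely a Calder\'on--Zygmund singular integral bound, and the $p\neq 2$ case is already covered by the framework used in the previous lemma, so no additional obstruction arises. All remaining steps are routine adaptations of the preceding arguments.
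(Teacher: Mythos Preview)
Your proposal is correct and follows essentially the same route as the paper: the same Lippmann--Schwinger representation via the free-space Maxwell kernel (your $\nabla\div N^{\sqrt{\alpha_\infty}}$ term is exactly the paper's $-\nabla M^{\sqrt{\alpha_\infty}}$), the same contraction estimate under $(\ref{Cdtalpha})$ for the $\mathbb{L}^p$ bound, and the same observation that $Curl\nabla=0$ kills the singular term when estimating $Curl(W_2)$. The uniqueness argument via $\Im(\alpha)>0$ is likewise identical.
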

\begin{proof}
Similarly as in the previous lemma, we begin by the integral equation representation of the solution $W_{2}$, which will be given by: 
\begin{equation}\label{ASVS}
W_{2} = N^{\sqrt{\alpha_{\infty}}}\left( \left(  \alpha(\cdot) - \alpha_{\infty}  \right)  W_{2} + g \right) - \frac{1}{\alpha_{\infty}} \nabla M^{\sqrt{\alpha_{\infty}}}\left( \left(  \alpha(\cdot) - \alpha_{\infty}  \right)  W_{2} + g \right), \quad \text{in} \quad \mathbb{R}^{3}.
\end{equation}
Again as in the previous lemma, restricted to $\Omega$, we see at (\ref{ASVS}) as an integral equation in the $\mathbb{L}^p(\Omega)$-spaces. Taking the $\mathbb{L}^p(\Omega)$-norm on both sides of the previous equation and using the condition, on the function $\alpha(\cdot)$, given by $(\ref{Cdtalpha})$, we obtain: 
\begin{equation}\label{EstimationW2}
\left\Vert W_{2} \right\Vert_{\mathbb{L}^{p}(\Omega)} \leq  \frac{\left( C(0,p) + \frac{C(2,p)}{\left\vert \alpha_{\infty} \right\vert} \right)}{\left[1 - \left( \frac{C(2,p)}{\left\vert \alpha_{\infty} \right\vert}  + C(0,p) \right) \left\Vert  \left( \alpha_{\infty} -\alpha(\cdot) \right)  \right\Vert_{\mathbb{L}^{\infty}(\Omega)} \right]} \,\, \left\Vert g \right\Vert_{\mathbb{L}^{p}(\Omega)}, \quad \text{where} \quad p = \frac{3}{2} - \delta. 
\end{equation}
This proves that $W_{2} \in \mathbb{L}^{p}(\Omega)$, with $p = \frac{3}{2} - \delta$, and then $W_{2}$ is well defined. Now, by taking the Curl operator of $(\ref{ASVS})$, the $\mathbb{L}^{p}(\mathbb{R}^{3})$-norm and using the estimation $(\ref{EstimationW2})$ we get: 
\begin{equation}\label{CurlEstimationW2}
\left\Vert Curl\left( W_{2} \right) \right\Vert_{\mathbb{L}^{p}(\Omega)} \leq  \frac{ C(1,p)  }{\left[1 - \left( \frac{C(2,p)}{\left\vert \alpha_{\infty} \right\vert}  + C(0,p) \right) \left\Vert  \left( \alpha_{\infty} -\alpha(\cdot) \right)  \right\Vert_{\mathbb{L}^{\infty}(\Omega)} \right]} \,\, \left\Vert g \right\Vert_{\mathbb{L}^{p}(\Omega)}, \, \text{where} \,\, p = \frac{3}{2} - \delta. 
\end{equation}
By gathering $(\ref{EstimationW2})$ with $(\ref{CurlEstimationW2})$ we deduce that $W_{2}$, solution of $(\ref{NLCSW2})$, will be in $\mathbb{H}(Curl,p,\Omega)$ with $p = \frac{3}{2} - \delta$.
\end{proof}
 
The previous lemma give us the global regularity of  $W_{2}$, solution of $(\ref{NLCSW2})$. The goal of the coming lemma is to provide a more explicit expression of the dominant term of $W_{2}$ near the fixed point $z$.
\begin{lemma}\label{ExpressionW2} 
For $x$ in $D^{\star}$, where $D^{\star}$ is the ball of center $z$ and radius $a^{\star}$ such that $a^{\star} > a$, we have: 
\begin{equation*}
W_{2}(x,z) = \frac{-1}{\alpha(z) \, \epsilon_{\infty}} \, \underset{x}{\nabla}   \underset{x}{\nabla} M\left(\Phi_{0}(\cdot,z)  \nabla \epsilon_{0}(z) \right)(x)  + W_{3}(x,z),
\end{equation*}
where $W_{3}(\cdot,z) \in  \mathbb{L}^{\frac{3(3-2\delta)}{(3+2\delta)}}\left(D^{\star}\right)$.
\end{lemma}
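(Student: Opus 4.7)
The plan is to start from the integral representation $(\ref{ASVS})$ of $W_{2}$, namely
\begin{equation*}
W_{2} = N^{k_{0}}(g) + N^{k_{0}}((\alpha - \alpha_{\infty}) W_{2}) - \frac{1}{\alpha_{\infty}} \nabla M^{k_{0}}(g) - \frac{1}{\alpha_{\infty}} \nabla M^{k_{0}}((\alpha - \alpha_{\infty}) W_{2}),
\end{equation*}
with $k_{0} := \sqrt{\alpha_{\infty}}$, and to identify the leading singular piece for $x$ in the ball $D^{\star}$ centered at $z$, absorbing all lower order contributions into $W_{3}$. The two Newtonian-potential terms act on $\mathbb{L}^{3/2-\delta}$ densities (using the regularity of $g$ from Lemma \ref{SE} and of $W_{2}$ from the preceding lemma), hence by Calderon-Zygmund they lie in $\mathbb{W}^{2,3/2-\delta}(D^{\star})$ and embed by Sobolev into $\mathbb{L}^{\frac{3(3-2\delta)}{(3+2\delta)}}(D^{\star})$; both are absorbed into $W_{3}$.

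The dominant singularity is then carried by $-\frac{1}{\alpha_{\infty}}\nabla M^{k_{0}}(g)$. I would replace $\Phi_{k}$ by $\Phi_{0}$ in $g(y,z) = -\frac{1}{\alpha_{\infty}}\nabla\alpha(y) \times Curl(\Phi_{k} I)(y,z)$, the difference $\Phi_{k}-\Phi_{0}$ being analytic in $|y-z|$ and contributing to $W_{3}$; similarly $\nabla M^{k_{0}}$ is replaced by $\nabla M$. Using $\nabla\alpha(y) = \omega^{2}\mu\,\nabla\epsilon_{0}(y)$ on the support of $g$ inside $D^{\star}$ together with the Taylor expansion $\nabla\epsilon_{0}(y) = \nabla\epsilon_{0}(z) + O(|y-z|)$ allowed by the $\mathcal{C}^{1}$ assumption, the $O(|y-z|)$ remainder is pushed into $\mathbb{L}^{\frac{3(3-2\delta)}{(3+2\delta)}}(D^{\star})$ by a scaling argument parallel to those in Section \ref{section2LSE}. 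What remains is proportional to $\nabla M(\nabla_{y}\Phi_{0}(\cdot,z) \times \nabla\epsilon_{0}(z))$; applying the BAC-CAB identity to the cross product under the integral, transferring one $\nabla$ to the outside of the magnetization operator and regrouping yields the required form $\nabla\nabla M(\Phi_{0}(\cdot,z)\nabla\epsilon_{0}(z))(x)$, up to further terms absorbable in $W_{3}$.

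For the last term $-\frac{1}{\alpha_{\infty}}\nabla M^{k_{0}}((\alpha-\alpha_{\infty}) W_{2})$ I would split $\alpha(y)-\alpha_{\infty} = (\alpha(y)-\alpha(z))+(\alpha(z)-\alpha_{\infty})$. The first summand is $O(|y-z|)$ and its contribution falls into $W_{3}$ by the same scaling argument as above. The constant second summand, when substituted back into the representation of $W_{2}$ and resummed as a geometric Neumann series (convergent thanks to the smallness condition $(\ref{Cdtalpha})$), renormalizes the reference coefficient $\alpha_{\infty}$ in the leading prefactor into $\alpha(z)$, yielding the claimed $-1/(\alpha(z)\epsilon_{\infty})$ rather than $-1/(\alpha_{\infty}\epsilon_{\infty})$. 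Equivalently, one could re-derive the integral representation locally in $D^{\star}$ against the constant-coefficient Green's tensor $\Upsilon_{\sqrt{\alpha(z)}}$ in place of $\Upsilon_{k_{0}}$ and treat $\alpha(\cdot)-\alpha(z) = O(|\cdot - z|)$ as a genuinely perturbative contribution.

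The main obstacle is precisely this last renormalization step: rigorously justifying that the non-perturbative constant piece $(\alpha(z)-\alpha_{\infty})$ of the coefficient shifts the prefactor from $\alpha_{\infty}$ to $\alpha(z)$ while the other contributions remain in the $W_{3}$-class. The rest of the argument is bookkeeping of Calderon-Zygmund and Sobolev-embedding bounds of the kind already used in Lemmas $\ref{IYLASS}$ and $\ref{DjHCV19}$.
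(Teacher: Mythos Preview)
Your route is genuinely different from the paper's. The paper does \emph{not} manipulate the global integral representation $(\ref{ASVS})$ over $\Omega$; instead it works with the PDE for $W_{2}$ on the ball $D^{\star}$ and performs a Helmholtz decomposition $W_{2}=V+\nabla\theta$ with $\div V=0$ and $\theta\in\mathbb{W}^{1,3/2-\delta}_{0}(D^{\star})$. The divergence-free part $V$ satisfies a vector Helmholtz equation with right-hand side in $\mathbb{L}^{3/2-\delta}$, hence $V\in\mathbb{W}^{2,3/2-\delta}(D^{\star})\hookrightarrow\mathbb{L}^{\frac{3(3-2\delta)}{3+2\delta}}(D^{\star})$ and lands in $W_{3}$. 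For the gradient part one writes $\nabla\theta=-\alpha(x)^{-1}(g+\Delta V)-V$ and Taylor-expands $\alpha^{-1}(x)$ at $z$: the prefactor $\alpha^{-1}(z)$ therefore appears \emph{directly}, with no resummation needed. Taking the divergence produces a Poisson equation for $\theta$ with Dirichlet data on $\partial D^{\star}$ whose most singular source term is $\frac{\alpha^{-1}(z)}{\epsilon_{\infty}}\nabla\nabla\Phi_{k}\cdot\nabla\epsilon_{0}(z)$. This is solved via the explicit Green's function $\Phi_{0}+\Phi_{R}$ of the ball (with $\Phi_{R}$ smooth), and two integrations by parts turn the result into the announced $\nabla M(\Phi_{0}(\cdot,z)\nabla\epsilon_{0}(z))$.

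What each approach buys: the paper's local-PDE route eliminates both of the difficulties you isolate. The renormalization $\alpha_{\infty}\to\alpha(z)$ is automatic once one expands $\alpha^{-1}$ rather than $\alpha$, and the passage to the $\nabla\nabla M(\Phi_{0}\nabla\epsilon_{0}(z))$ structure is a clean integration by parts in the Poisson representation of $\theta$ rather than a BAC-CAB manipulation of $\nabla M(g)$. Your Neumann-series renormalization can be carried through---the essential point being that the leading singular part of $W_{2}$ is a gradient, on which $\nabla M$ acts, up to terms smooth in the interior of $\Omega$, as the identity, so the series on that piece sums to the factor $\alpha_{\infty}/\alpha(z)$---but this is more delicate than the paper's one-line Taylor expansion. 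Likewise, your identification of the singular part of $-\frac{1}{\alpha_{\infty}}\nabla M(g)$ with $\nabla\nabla M(\Phi_{0}\nabla\epsilon_{0}(z))$ is correct at the level of singularity order but requires more algebra than indicated: after BAC-CAB, each column of $g$ has two terms, and one must combine $\nabla M(\nabla\Phi_{0})=\nabla\Phi_{0}+(\text{smooth})$ with a second-derivative-of-$N$ contribution before the target expression emerges.
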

\begin{proof}
We begin by setting, in the ball $D^{\star}$, the PDE satisfied by $W_{2}(\cdot,z)$, which is: 
\begin{equation}\label{044prime}
  \underset{x}{Curl} \circ \underset{x}{Curl} \left( W_{2} \right)(x,z) - \alpha(x)  W_{2}(x,z) = g(x,z), \quad x \in D^{\star}.
\end{equation}
As it was shown previously, see $(\ref{EstimationW2})$, the solution $W_{2}(\cdot,z) \in \mathbb{L}^{\frac{3}{2}-\delta}(D^{\star})$. Thanks to \cite{Amrouche-Houda}, Lemma 5.4, we can rewrite $W_{2}(\cdot,z)$ as: 
\begin{equation}\label{decompositionW4}
W_{2}(x,z) = V(x,z) + \underset{x}{\nabla} \theta(x,z), 
\end{equation}
where 
\begin{equation*}
V(\cdot,z) \in \mathbb{L}^{\frac{3}{2}-\delta}(D^{\star}), \; \underset{x}{\div}(V(x,z)) = 0 \quad \text{and} \quad \theta(\cdot,z) \in \mathbb{W}^{1,\frac{3}{2}-\delta}_{0}(D^{\star}). 
\end{equation*}
Plugging the new expression of $W_{2}(x,z)$, see $(\ref{decompositionW4})$, into $(\ref{044prime})$ to obtain:   
\begin{align}\label{W4=V+gradtheta}
  \begin{cases}
      \left( \underset{x}{\Delta} + \alpha(x) \, I \right) V(x,z)  = \;- g(x,z) - \alpha(x) \; \underset{x}{\nabla} \theta(x,z)  & \text{in \,\, $D^{\star}$} \\
 \qquad  \quad \,    \underset{x}{\div}(V(x,z)) \, \, \, \, =\;  0 & \text{in \,\, $D^{\star}$} \\
 \qquad  \quad \, \qquad  \quad \,\, \theta(x,z) =\;  0 & \text{on \,\, $\partial D^{\star}$}
    \end{cases}.
\end{align}
Since $\theta(\cdot,z) \in \mathbb{W}^{1,\frac{3}{2}-\delta}_{0}(D^{\star})$ and $g(\cdot,z) \in \mathbb{L}^{\frac{3}{2}-\delta}(D^{\star})$, we deduce that the right hand side of $(\ref{W4=V+gradtheta})$ is in $\mathbb{L}^{\frac{3}{2}-\delta}(D^{\star})$. From the interior regularity results for the Helmholtz equation, we deduce that $V(\cdot,z) \in \mathbb{W}^{2,\frac{3}{2}-\delta}(D^{\star})$. In addition, using the embedding of Sobolev spaces,   $
V(\cdot,z) \in \mathbb{W}^{2,\frac{3}{2}-\delta}\left(D^{\star} \right) \lhook\joinrel\xrightarrow{\text{continously}} \mathbb{L}^{\frac{3(3-2\delta)}{4\delta}}\left(D^{\star}\right),$
we deduce that:
\begin{equation}\label{V-Regularity}
V(\cdot,z) \in  \mathbb{L}^{\frac{3\, (3 - 2 \delta)}{4 \, \delta}}(D^{\star}). 
\end{equation}
Next, assuming that $\alpha(\cdot) \neq 0$ in $D^{\star}$, we rewrite $(\ref{W4=V+gradtheta})$ as: 
\begin{equation*}
\underset{x}{\nabla} \theta(x,z) \; = - \alpha(x)^{-1} \, \left( g(x,z) +  \underset{x}{\Delta} V(x,z) \right)- V(x,z) \;\; \text{in}  \;\, D^{\star} \,\; \text{and} \;\;\; \theta(\cdot,z)_{|_{\partial D^{\star}}} =\; 0.
\end{equation*}
Near the center $z$, by Taylor expansion, we get: 
\begin{eqnarray*}
\underset{x}{\nabla} \theta(x,z) &=& - \alpha^{-1}(z) \, \left( g(x,z) +  \underset{x}{\Delta} V(x,z) \right) - V(x,z) \\ &-& \int_{0}^{1} \nabla \alpha^{-1}(z+t(x - z)) \cdot (x - z) dt \, \left( g(x,z)  + \underset{x}{\Delta} V(x,z) \right). 
\end{eqnarray*}
Taking the divergence operator, with respect to $x$, and using the fact that $\underset{x}{\div}(V(x,z)) = 0$, we obtain: 
\begin{eqnarray*}
\underset{x}{\Delta} \theta(x,z) &=& - \alpha^{-1}(z) \, \underset{x}{\div}\left( g(x,z) \right) \\ &-& \underset{x}{\div}\left( \int_{0}^{1} \nabla \alpha^{-1}(z+t(x - z)) \cdot (x - z) dt \, \left( g(x,z)  + \underset{x}{\Delta} V(x,z) \right) \right)\\
\underset{x}{\Delta} \theta(x,z) & \overset{(\ref{FW})}{=} & - \frac{\alpha^{-1}(z)}{\epsilon_{\infty}} \, \left(  \omega^{2} \, \mu \, \epsilon_{\infty} \underset{x}{\nabla} \epsilon_{0}(x) \, \Phi_{k}(x,z) \, + \, \underset{x}{\nabla} \underset{x}{\nabla} (\Phi_{k}(x,z)) \cdot \underset{x}{\nabla} \epsilon_{0}(x) \right) \\
&-& \underset{x}{\div}\left( \int_{0}^{1} \nabla \alpha^{-1}(z+t(x - z)) \cdot (x - z) dt \, \left( g(x,z)  + \underset{x}{\Delta} V(x,z) \right) \right).
\end{eqnarray*}
Again, by Taylor expansion for the function $\nabla \epsilon_{0}(\cdot)$ near the point $z$, we rewrite the previous equation as: 
\begin{eqnarray*}
\underset{x}{\Delta} \theta(x,z) & = & - \frac{\alpha^{-1}(z)}{\epsilon_{\infty}} \, \left(  \omega^{2} \, \mu \, \epsilon_{\infty} \underset{x}{\nabla} \epsilon_{0}(x) \, \Phi_{k}(x,z) \, + \, \underset{x}{\nabla} \underset{x}{\nabla} (\Phi_{k}(x,z)) \cdot \nabla \epsilon_{0}(z) \right) \\
& - & \frac{\alpha^{-1}(z)}{\epsilon_{\infty}} \,   \underset{x}{\nabla} \underset{x}{\nabla} (\Phi_{k}(x,z)) \cdot \int_{0}^{1} \underset{x}{\nabla} \left( \underset{x}{\nabla} \epsilon_{0} \right) (z + t (x - z)) \cdot (x - z) \, dt  \\
&-& \underset{x}{\div}\left( \int_{0}^{1} \nabla \alpha^{-1}(z+t(x - z)) \cdot (x - z) dt \, \left( g(x,z)  + \underset{x}{\Delta} V(x,z) \right) \right).
\end{eqnarray*}
To solve the previous equation, or to get an expression for the dominant term of its solution, we start by split $\theta(x,z)$ as $\theta(x,z) = \underset{j=1}{\overset{3}{\sum}} \theta_{j}(x,z)$, where: 
\begin{align}\label{EquaTheta1}
    \begin{cases}
      - \underset{x}{\Delta} \theta_{1}(x,z)  \, \, = \alpha^{-1}(z) \, \omega^{2} \, \mu  \, \underset{x}{\nabla} \epsilon_{0}(x) \, \Phi_{k}(x,z) & \text{in $D^{\star}$} \\
    \qquad   \theta_{1}(x,z) = 0 & \text{on $\partial D^{\star}$}
    \end{cases},
\end{align}
\begin{align}\label{EquaTheta2}
    \begin{cases}
      - \underset{x}{\Delta} \theta_{2}(x,z) \,\, = \frac{\alpha^{-1}(z)}{\epsilon_{\infty}} \,  \underset{x}{\nabla} \underset{x}{\nabla} (\Phi_{k}(x,z)) \cdot \nabla \epsilon_{0}(z) & \text{in $D^{\star}$} \\
  \qquad    \theta_{2}(x,z) = 0 & \text{on $\partial D^{\star}$}
    \end{cases}
\end{align}
and
\begin{align}\label{MLNL}
    \begin{cases}
    - \underset{x}{\Delta} \theta_{3}(x,z)  =   \frac{\alpha^{-1}(z)}{\epsilon_{\infty}} \,   \underset{x}{\nabla} \underset{x}{\nabla} (\Phi_{k}(x,z)) \cdot \int_{0}^{1} \underset{x}{\nabla} \left( \underset{x}{\nabla} \epsilon_{0} \right) (z + t (x - z)) \cdot (x - z) \, dt  & \text{} \\
 \qquad \qquad \quad    + \,\, \underset{x}{\div}\left( \int_{0}^{1} \nabla \alpha^{-1}(z+t(x - z)) \cdot (x - z) dt \, \left( g(x,z)  + \underset{x}{\Delta} V(x,z) \right) \right)  & \text{in $D^{\star}$} \\
   \quad \,\,   \theta_{3}(x,z) = 0 & \text{on $\partial D^{\star}$}
    \end{cases}.
\end{align} 
From potential theory results we can check that $\theta_{1}(x,z)$, solution of $(\ref{EquaTheta1})$, and $\theta_{2}(x,z)$, solution of $(\ref{EquaTheta2})$, can be represented in the following integral form:  
\begin{align}\label{AZMed}
   \begin{cases}
      \theta_{1}(x,z) & = \, \alpha^{-1}(z) \, \omega^{2} \, \mu  \int_{D^{\star}} G_{\theta}(x,y)  \, \underset{y}{\nabla} \epsilon_{0}(y) \, \Phi_{k}(y,z)\, dy \\
      \theta_{2}(x,z) & = \frac{\alpha^{-1}(z)}{\epsilon_{\infty}} \, \int_{D^{\star}} G_{\theta}(x,y)  \,  \underset{y}{\nabla} \underset{y}{\nabla} (\Phi_{k}(y,z)) \cdot \nabla \epsilon_{0}(z) \, dy
    \end{cases},
\end{align}
where $G_{\theta}(\cdot,\cdot)$ is the kernel solution of: 
\begin{align}\label{PDEGTheta}
   \begin{cases}
      \Delta G_{\theta} \, = - \delta & \text{in $D^{\star}$} \\
     \quad G_{\theta} = 0 & \text{on $\partial D^{\star}$}
    \end{cases}.
\end{align}
The construction of $G_{\theta}(\cdot,\cdot)$, in the case of unit ball $B(0,1)$, is already done in \textbf{Subsection 2.2.4} of the reference \cite{Evans}. Now, without repeating the same proof, we can deduce  that: 
\begin{equation}\label{DefGTheta}
G_{\theta}(x,y) = \Phi_{0}(x,y)+\Phi_{R}(x,y), \quad  x,y \in D^{\star}, \,\, x \neq y, 
\end{equation}
where $\Phi_{0}$ is the fundamental solution of Laplace equation in the entire space  given by $\Phi_{0}(x,y) := \frac{1}{4 \, \pi \, \left\vert x - y \right\vert}$ and $\Phi_{R}$ is the remainder part given by\footnote{The construction idea of $\Phi_{R}(\cdot,\cdot)$ consist in \emph{inverting the singularity} from inside of $D^{\star}$ to outside of $D^{\star}$. This justify its regularity.}
\begin{equation}\label{CHT}
\Phi_{R}(x,y) := \frac{-1}{4 \, \pi} \,  \frac{a^{\star} \, \left\vert x - z \right\vert}{\left\vert \left\vert x - z \right\vert^{2} \, \left( y - z \right) - (a^{\star})^{2} \,  (x-z)  \right\vert} \in \mathbb{W}^{3,\infty}\left( D \times D \right).
\end{equation}
Next, we apply  $(\ref{DefGTheta})$ into $(\ref{AZMed})$ to get a dominant term and a remainder term for $\theta_{1,2}(x,z)$. 
\begin{enumerate}
\item Computation of $\theta_{1}(x,z)$.\\
From $(\ref{AZMed})$, we have: 
\begin{eqnarray*}
\theta_{1}(x,z) & = &  \alpha^{-1}(z) \, \omega^{2} \, \mu  \int_{D^{\star}} G_{\theta}(x,y)  \, \underset{y}{\nabla} \epsilon_{0}(y) \, \Phi_{k}(y,z)\, dy \\
& \overset{(\ref{DefGTheta})}{=} & \alpha^{-1}(z) \, \omega^{2} \, \mu  \int_{D^{\star}} \Phi_{0}(x,y)  \, \underset{y}{\nabla} \epsilon_{0}(y) \, \Phi_{k}(y,z)\, dy + \alpha^{-1}(z) \, \omega^{2} \, \mu  \int_{D^{\star}} \Phi_{R}(x,y) \, \underset{y}{\nabla} \epsilon_{0}(y) \, \Phi_{k}(y,z)\, dy \\
& \overset{(\ref{DefNDefM})}{=} &\, \omega^{2} \, \mu \alpha^{-1}(z) \, \, N\left( \nabla \epsilon_{0}(\cdot) \, \Phi_{k}(\cdot,z) \right)(x)  + \alpha^{-1}(z) \, \omega^{2} \, \mu  \int_{D^{\star}} \Phi_{R}(x,y) \, \underset{y}{\nabla} \epsilon_{0}(y) \, \Phi_{k}(y,z)\, dy.
\end{eqnarray*}
Set 
\begin{equation*}
\theta_{1,R}(x,z) := \alpha^{-1}(z) \, \omega^{2} \, \mu  \int_{D^{\star}} \Phi_{R}(x,y) \, \underset{y}{\nabla} \epsilon_{0}(y) \, \Phi_{k}(y,z)\, dy.
\end{equation*}
Then,  
\begin{eqnarray*}
\underset{x\in D^{\star}}{Sup}  \left\vert \underset{x}{\nabla} \theta_{1,R}(x,z) \right\vert &=& \underset{x\in D^{\star}}{Sup} \left\vert  \alpha^{-1}(z) \, \omega^{2} \, \mu  \int_{D^{\star}} \underset{x}{\nabla} \Phi_{R}(x,y) \otimes \underset{y}{\nabla} \epsilon_{0}(y) \, \Phi_{k}(y,z)\, dy \right\vert \\
& \lesssim & \left\Vert \, \nabla \epsilon_{0}(\cdot)  \right\Vert_{\mathbb{L}^{\infty}(D^{\star})} \,     \underset{x \in D^{\star}}{Sup} \, \underset{y \in D^{\star}}{Sup} \left\vert  \underset{x}{\nabla} \Phi_{R}(x,y) \right\vert \, \int_{D^{\star}} \, \frac{1}{\left\vert y-z \right\vert} dy. 
\end{eqnarray*}
Assuming that $\left\Vert \, \nabla \epsilon_{0}(\cdot)  \right\Vert_{\mathbb{L}^{\infty}(D^{\star})} = \mathcal{O}(1)$ and computing the last integral explicitly to end up with: 
\begin{equation}\label{RemaindergradTheta1}
\left\Vert \nabla \theta_{1,R}(\cdot,z) \right\Vert_{\mathbb{L}^{\infty}(D^{\star})} \lesssim \left( a^{\star} \right)^{2} \, \left\Vert \nabla \Phi_{R}(\cdot,\cdot) \right\Vert_{\mathbb{L}^{\infty}(D^{\star} \times D^{\star})} \overset{(\ref{CHT})}{<} + \infty. 
\end{equation}
\item Computation of $\theta_{2}(x,z)$.\\
From $(\ref{AZMed})$, we have: 
\begin{equation*}
      \theta_{2}(x,z) =  \frac{\alpha^{-1}(z)}{\epsilon_{\infty}} \, \int_{D^{\star}} G_{\theta}(x,y)  \,  \underset{y}{\nabla} \underset{y}{\nabla} (\Phi_{k}(y,z)) \cdot \nabla \epsilon_{0}(z) \, dy, \end{equation*}
which, after two times integration by parts and using the fact that ${G_{\theta}}_{|_{\partial D^{\star}}} = 0$, see $(\ref{PDEGTheta})$, becomes       
 \begin{eqnarray*}
 \theta_{2}(x,z) &=&  \frac{-\alpha^{-1}(z)}{\epsilon_{\infty}} \, \underset{x}{\nabla} \, \int_{D^{\star}} \underset{y}{\nabla} G_{\theta}(x,y)   \cdot \left( \Phi_{k}(y,z)  \nabla \epsilon_{0}(z) \right) dy \\
 \theta_{2}(x,z) & \overset{(\ref{DefGTheta})}{=} & \frac{-\alpha^{-1}(z)}{\epsilon_{\infty}} \, \underset{x}{\nabla} \, \int_{D^{\star}} \underset{y}{\nabla} \Phi_{0}(x,y)   \cdot \left( \Phi_{k}(y,z)  \nabla \epsilon_{0}(z) \right) dy \\ &+& \frac{-\alpha^{-1}(z)}{\epsilon_{\infty}} \, \underset{x}{\nabla} \, \int_{D^{\star}} \underset{y}{\nabla} \Phi_{R}(x,y)    \cdot \left( \Phi_{k}(y,z)  \nabla \epsilon_{0}(z) \right) dy  \\ 
& \overset{(\ref{DefNDefM})}{=} & - \frac{\alpha^{-1}(z)}{\epsilon_{\infty}} \,  \underset{x}{\nabla} M\left(\Phi_{k}(\cdot,z)  \nabla \epsilon_{0}(z) \right)(x) - \frac{\alpha^{-1}(z)}{\epsilon_{\infty}} \, \underset{x}{\nabla} \, \int_{D^{\star}} \underset{y}{\nabla} \Phi_{R}(x,y)    \cdot \left( \Phi_{k}(y,z)  \nabla \epsilon_{0}(z) \right) dy .
\end{eqnarray*}
Set
\begin{eqnarray*}
 \theta_{2,R}(x,z) &:=& - \frac{\alpha^{-1}(z)}{\epsilon_{\infty}} \, \underset{x}{\nabla} \, \int_{D^{\star}} \underset{y}{\nabla} \Phi_{R}(x,y)    \cdot \left( \Phi_{k}(y,z)  \nabla \epsilon_{0}(z) \right) dy \\
 &=& - \frac{\alpha^{-1}(z)}{\epsilon_{\infty}}  \int_{D^{\star}} \, \underset{x}{\nabla} \,\underset{y}{\nabla} \Phi_{R}(x,y) \cdot \nabla \epsilon_{0}(z)  \Phi_{k}(y,z) \, dy. 
\end{eqnarray*}
Then, 
\begin{eqnarray*}
\underset{x \in D^{\star}}{Sup} \left\vert \underset{x}{\nabla} \theta_{2,R}(x,z) \right\vert & \lesssim & \underset{x \in D^{\star}}{Sup} \, \int_{D^{\star}} \underset{y}{Sup} \left\vert \underset{x}{\nabla} \underset{x}{\nabla} \,\underset{y}{\nabla} \Phi_{R}(x,y) \right\vert \, \frac{1}{\left\vert y - z \right\vert}  dy \\
& = & \underset{x \in D^{\star}}{Sup} \,\underset{y \in D^{\star}}{Sup} \left\vert \underset{x}{\nabla} \underset{x}{\nabla} \,\underset{y}{\nabla} \Phi_{R}(x,y) \right\vert \, \int_{D^{\star}}  \frac{1}{\left\vert y - z \right\vert}  dy.
\end{eqnarray*}
Then, 
\begin{equation}\label{RemaindergradTheta2}
\left\Vert \nabla \theta_{2,R}(\cdot ,z)  \right\Vert_{\mathbb{L}^{\infty}(D^{\star})} \lesssim \left( a^{\star} \right)^{2} \, \left\Vert \nabla \nabla \Phi_{R}(\cdot,\cdot) \right\Vert_{\mathbb{L}^{\infty}(D^{\star} \times D^{\star})} \overset{(\ref{CHT})}{<} + \infty. 
\end{equation}
\end{enumerate}
Summarizing all this to obtain:
\begin{align}\label{Theta1andTheta2}
   \begin{cases}
       \theta_{1}(x,z)  =  \, \omega^{2} \, \mu \alpha^{-1}(z) \, \, N\left( \nabla \epsilon_{0}(\cdot) \, \Phi_{k}(\cdot,z) \right)(x) + \theta_{1,R}(x,z) &  \\
     \theta_{2}(x,z)  = - \frac{\alpha^{-1}(z)}{\epsilon_{\infty}} \,  \underset{x}{\nabla} M\left(\Phi_{k}(\cdot,z)  \nabla \epsilon_{0}(z) \right)(x) + \theta_{2,R}(x,z) & 
    \end{cases}.
\end{align}
The computation of an explicit formula for the dominant term of  $\theta_{3}(x,z)$, satisfying $(\ref{MLNL})$, is really difficult task. But, in singularity analysis point of view, the dominant part of the right hand side of $(\ref{MLNL})$ behaves as $\sim \Phi_{0}^{2}(\cdot,z) \in \mathbb{L}^{\frac{3}{2}-\delta}(D^{\star})$. Then, as justified previously, we can prove that:
\begin{equation*}
\theta_{3}(\cdot,z) \in \mathbb{W}^{2,\frac{3}{2}-\delta}(D^{\star}),
\end{equation*} 
hence, 
\begin{equation}\label{gradtheta4-Regularity}
\nabla \theta_{3}(\cdot,z) \in \mathbb{W}^{1,\frac{3}{2}-\delta}\left( D^{\star} \right) \lhook\joinrel\xrightarrow{\text{continously}} \mathbb{L}^{\frac{3(3-2\delta)}{(3+2\delta)}}\left( D^{\star} \right).
\end{equation}
Summarizing all this results to obtain: 
\begin{eqnarray*}
\nonumber
\underset{x}{\nabla} \theta(x,z) &=& \underset{x}{\nabla} \theta_{1}(x,z) + \underset{x}{\nabla} \theta_{2}(x,z) + \underset{x}{\nabla} \theta_{3}(x,z) \\ \nonumber &=& \frac{1}{\alpha(z) \, \epsilon_{\infty}} \, \underset{x}{\nabla} \left[   \omega^{2} \, \mu \, \epsilon_{\infty}  \, N\left( \nabla \epsilon_{0}(\cdot) \, \Phi_{k}(\cdot,z) \right)(x) -  \underset{x}{\nabla} M\left(\Phi_{k}(\cdot,z)  \nabla \epsilon_{0}(z) \right)(x) \right] \\ &+& \underset{x}{\nabla} \theta_{1,R}(x,z)+ \underset{x}{\nabla} \theta_{2,R}(x,z)+ \underset{x}{\nabla} \theta_{3}(x,z).
\end{eqnarray*}  
Finally, $W_{2}(x,z)$ defined by $(\ref{decompositionW4})$, becomes near the ball $D^{\star}$,
\begin{eqnarray}\label{AS83}
\nonumber
W_{2}(x,z) &=& \frac{1}{\alpha(z) \, \epsilon_{\infty}} \, \, \underset{x}{\nabla} \left[   \omega^{2} \, \mu \, \epsilon_{\infty}  \, N\left( \nabla \epsilon_{0}(\cdot) \, \Phi_{k}(\cdot,z) \right)(x) -  \underset{x}{\nabla} M\left(\Phi_{k}(\cdot,z)  \nabla \epsilon_{0}(z) \right)(x) \right] \\ &+& \underset{x}{\nabla} \theta_{1,R}(x,z)+ \underset{x}{\nabla} \theta_{2,R}(x,z) + \underset{x}{\nabla} \theta_{3}(x,z)+ V(x,z). 
\end{eqnarray}
Thanks to $(\ref{RemaindergradTheta1}),(\ref{RemaindergradTheta2}), (\ref{V-Regularity})$ and $(\ref{gradtheta4-Regularity})$, the regularity of the remainder term $\nabla \theta_{1,R}(\cdot ,z)+ \nabla \theta_{2,R}(\cdot,z) + V(\cdot,z) + \nabla \theta_{3}(\cdot,z)$ in $(\ref{AS83})$ will be: 
\begin{equation}\label{V+gradTheta}
\nabla \theta_{1,R}(\cdot ,z)+ \nabla \theta_{2,R}(\cdot,z) + V(\cdot,z) + \nabla \theta_{3}(\cdot,z) \in  \mathbb{L}^{\frac{3(3-2\delta)}{(3+2\delta)}}\left( D^{\star} \right).
\end{equation}
In addition, remark that from the definition of $\theta_{1}(\cdot,z)$, see $(\ref{Theta1andTheta2})$, the source data $\nabla \epsilon_{0}(\cdot) \, \Phi_{k}(\cdot,z)$ is in $\mathbb{L}^{3 -\delta}\left( D^{\star} \right)$  and from the regularity of the operator $\nabla N\left( \cdot \right)$, we deduce that: 
\begin{equation}\label{gradNPhi}
\nabla \, N\left( \nabla \epsilon_{0}(\cdot) \, \Phi_{k}(\cdot,z) \right) \in \mathbb{W}^{1,3-\delta}\left( D^{\star} \right) \subset \mathbb{L}^{3-\delta}\left( D^{\star} \right) \subset \mathbb{L}^{\frac{3(3-2\delta)}{(3+2\delta)}}\left( D^{\star} \right).
\end{equation} 
Also, we have: 
\begin{equation}\label{DiffgradgradM}
\nabla \nabla M\left(\Phi_{k}(\cdot,z)  \nabla \epsilon_{0}(z) \right) = \nabla \nabla M\left(\Phi_{0}(\cdot,z)  \nabla \epsilon_{0}(z) \right) - \nabla \nabla \div N\left(\left( \Phi_{k} - \Phi_{0} \right)(\cdot,z) \nabla \epsilon_{0}(z) \right),
\end{equation}
and because that $\left( \Phi_{k} - \Phi_{0} \right)(\cdot,z) \nabla \epsilon_{0}(z) \in \mathbb{W}^{1,\infty}(D^{\star})$ we deduce that $N\left( \left( \Phi_{k} - \Phi_{0} \right)(\cdot,z) \nabla \epsilon_{0}(z)\right) \in \mathbb{W}^{3,\infty}(D^{\star})$ and, consequently, then:
\begin{equation}\label{LZNS}
 \nabla \nabla \div N\left(\left( \Phi_{k} - \Phi_{0} \right)(\cdot,z) \nabla \epsilon_{0}(z) \right) \in \mathbb{L}^{\infty}( D^{\star} ). 
\end{equation}
Now, we set $W_{3}$ to be:
\begin{eqnarray}\label{W3ExpressionRegularity}
\nonumber
W_{3} &:=& \bigg[ \frac{1}{\alpha(z) \, \epsilon_{\infty}} \nabla \nabla \div N\left(\left( \Phi_{k} - \Phi_{0} \right) \nabla \epsilon_{0}(z) \right) +  \frac{\omega^{2} \, \mu}{\alpha(z)} \,   \nabla  \, N\left( \nabla \epsilon_{0}(\cdot) \, \Phi_{k} \right)  \\ &+& \nabla \theta_{1,R} + \nabla \theta_{2,R} + V + \nabla \theta_{3} \bigg] \in \mathbb{L}^{\frac{3(3-2\delta)}{(3+2\delta)}}\left( D^{\star} \right),
\end{eqnarray} 
where its regularity is a straightforward consequence of $(\ref{V+gradTheta}), (\ref{gradNPhi})$ and $(\ref{LZNS})$. Finally, thanks to $(\ref{W3ExpressionRegularity})$, the expression of $W_{2}$ given by $(\ref{AS83})$ becomes: 
\begin{equation*}\label{AS83New}
W_{2}(x,z) = \frac{-1}{\alpha(z) \, \epsilon_{\infty}} \, \underset{x}{\nabla}   \underset{x}{\nabla} M\left(\Phi_{0}(\cdot,z)  \nabla \epsilon_{0}(z) \right)(x)  + W_{3}(x,z). 
\end{equation*}
This was to be demonstrated. 
\end{proof}

For the equation $(\ref{NLCS})$, the last term to analyze is $\Gamma^{\delta}(\cdot,z)$, solution of $(\ref{NLCSGammadelta})$, given by: 
\begin{equation*}
      \left( Curl \circ Curl - \alpha(\cdot) \, I \right) \Gamma^{\delta}(\cdot,z) =  \frac{\left( \epsilon_{\infty} - \epsilon_{0} (z) \right)}{\epsilon_{\infty}}  \underset{z}{\delta}(\cdot) \, I 
\end{equation*}
with the radiation conditions at infinity.
In straightforward manner, regarding only the equation satisfied by $G_{k}(\cdot,\cdot)$, see for instance $(\ref{equa1})$, we deduce that $\Gamma^{\delta}(\cdot,z)$ can be written as: 
\begin{equation}\label{Gammadelta}
\Gamma^{\delta}(\cdot,z) = \frac{\left( \epsilon_{\infty} - \epsilon_{0}(z) \right)}{\epsilon_{\infty}} G_{k}(\cdot,z).
\end{equation}
Gathering all this to get a compact expression for the function $\Gamma(\cdot,z)$. More precisely, we have: 
\begin{equation*}
\Gamma(\cdot,z) = W_{1}(\cdot,z) + W_{2}(\cdot,z) + \Gamma^{\delta}(\cdot,z), 
\end{equation*}
and thanks to Lemma $(\ref{DjHCV19})$, Lemma  $(\ref{ExpressionW2})$ and the expression of $\Gamma^{\delta}$ given by $(\ref{Gammadelta})$ we get: 
\begin{equation}\label{Gamma=formula+Vtilde}
\Gamma(x,z) = \frac{-1}{\alpha(z) \, \epsilon_{\infty}} \, \underset{x}{\nabla}   \underset{x}{\nabla} M\left(\Phi_{0}(\cdot,z)  \nabla \epsilon_{0}(z) \right)(x) + \frac{\left( \epsilon_{\infty} - \epsilon_{0}(z) \right)}{\epsilon_{\infty}} G_{k}(x,z) + W_{1}(x,z) + W_{3}(x,z),
\end{equation}  
where 
\begin{equation}\label{SYW1W3}
\left( W_{1} + W_{3}  \right)(\cdot,z) \in \mathbb{L}^{\frac{3(3-2\delta)}{(3+2\delta)}}(D).
\end{equation} 
In conclusion, the Green kernel $G_{k}(\cdot, \cdot)$, constructed as:  
\begin{equation*}
G_{k}(\cdot,z) = \Upsilon_{k}(\cdot,z) + \Gamma(\cdot,z),
\end{equation*}
takes, regarding $(\ref{Gamma=formula+Vtilde})$, the following form: 
\begin{eqnarray*}
G_{k}(\cdot,z) &=& \Upsilon_{k}(\cdot,z) - \frac{1}{\alpha(z) \, \epsilon_{\infty}} \, \nabla \nabla M\left(\Phi_{0}(\cdot,z)  \nabla \epsilon_{0}(z) \right)(\cdot) + \frac{\left( \epsilon_{\infty} - \epsilon_{0}(z) \right)}{\epsilon_{\infty}} G_{k}(\cdot,z) + W_{1}(\cdot,z) + W_{3}(\cdot,z) \\
G_{k}(\cdot,z) &=&\frac{\epsilon_{\infty}}{\epsilon_{0}(z)} \left[ \Upsilon_{k}(\cdot,z) - \frac{1}{\alpha(z) \, \epsilon_{\infty}} \, \nabla \nabla M\left(\Phi_{0}(\cdot,z)  \nabla \epsilon_{0}(z) \right)(\cdot) + W_{1}(\cdot,z) + W_{3}(\cdot,z) \right] \\
G_{k}(\cdot,z) & \overset{(\ref{equa3})}{=} & \frac{\epsilon_{\infty}}{\epsilon_{0}(z)} \, \Phi_{k}(\cdot,z) \, I + \frac{1}{\omega^{2} \, \mu \, \epsilon_{0}(z)} \; \nabla \, \nabla \left( \Phi_{k}\right)(\cdot,z)  - \frac{1}{\alpha(z) \, \epsilon_{0}(z)} \, \nabla \nabla M\left(\Phi_{0}(\cdot,z)  \nabla \epsilon_{0}(z) \right)(\cdot) \\ 
&+& \frac{\epsilon_{\infty}}{\epsilon_{0}(z)} \left[ W_{1}(\cdot,z) + W_{3}(\cdot,z) \right].
\end{eqnarray*}
Moreover, for the first term on the right hand side, we have: 
\begin{equation}\label{SY}
\Phi_{k}(\cdot,z) \, I \in \mathbb{L}^{3-\delta}\left(D\right) \subset \mathbb{L}^{\frac{3(3-2\delta)}{(3+2\delta)}}\left(D\right). 
\end{equation}
Also, in the same manner as $(\ref{DiffgradgradM})$, we have: 
\begin{equation*}
\nabla \, \nabla \left( \Phi_{k}\right) = \nabla \, \nabla \left( \Phi_{0}\right) + \nabla \, \nabla \left( \Phi_{k} - \Phi_{0} \right),
\end{equation*}
where, the difference term, 
\begin{equation}\label{SY1}
\nabla \, \nabla \left( \Phi_{k} - \Phi_{0} \right)(\cdot,z) \simeq \nabla \, \nabla \left( \Phi^{-1}_{0} \right)(\cdot,z) \in \mathbb{L}^{3-\delta}(D) \subset  \mathbb{L}^{\frac{3(3-2\delta)}{(3+2\delta)}}\left(D\right).
\end{equation}
Then, by setting $W_{4}(\cdot,z)$ to be: 
\begin{equation*}
W_{4}(\cdot,z) := \frac{\epsilon_{\infty}}{\epsilon_{0}(z)} \, \Phi_{k}(\cdot,z) \, I + \frac{1}{\omega^{2} \, \mu \, \epsilon_{0}(z)} \; \nabla \, \nabla \left( \Phi_{k} - \Phi_{0} \right)(\cdot,z)  +  \frac{\epsilon_{\infty}}{\epsilon_{0}(z)} \left[ W_{1}(\cdot,z) + W_{3}(\cdot,z) \right],
\end{equation*}
 we end up with:  
\begin{equation}\label{ExpansionofGkII}
G_{k}(\cdot,z)  =  \frac{1}{\omega^{2} \, \mu \, \epsilon_{0}(z) } \; \nabla \, \nabla \left( \Phi_{0}\right)(\cdot,z)  - \frac{1}{\alpha(z) \, \epsilon_{0}(z)} \, \nabla \nabla M\left(\Phi_{0}(\cdot,z)  \nabla \epsilon_{0}(z) \right)(\cdot) + W_{4}(\cdot,z),
\end{equation}
where, from $(\ref{SY}),(\ref{SY1})$ and $(\ref{SYW1W3})$, we have $W_{4}(\cdot,z) \in \mathbb{L}^{\frac{3(3-2\delta)}{(3+2\delta)}}\left(D\right)$. This ends the proof of  \textbf{Theorem $\ref{DH}$}. 
\subsection{Justification of the representation $(\ref{SK0})$} \label{Subsection-Distribution}
The goal of this subsection is to give sense of the Lippmann-Schwinger equation, For this, we recall that:
\begin{equation}\label{FLKT}
u_{1}(x) + \omega^{2}  \, \int_{D} G_{k}(x,y) \cdot u_{1}(y) \, (\bm{n}^{2}_{0}(y)-\bm{n}^{2}(y)) \, dy = u_{0}(x), \quad x \in \mathbb{R}^{3}, 
\end{equation}
where the kernel $G_{k}(\cdot,\cdot)$ is solution of 
\begin{equation*}
\underset{y}{\nabla} \times \underset{y}{\nabla} \times G_{k}(x,y) -  \omega^{2} \, \bm{n}^{2}_{0}(y) \, G_{k}(x,y) = \underset{x}{\delta}(y) I,  
\end{equation*}
satisfying the Silver-M\"{u}ller radiation condition at infinity: 
\begin{equation*}
\lim_{\left\vert x \right\vert \rightarrow +\infty} \;\; \left\vert x \right\vert \;\; \left( \underset{y}{\nabla} \times G_{k}(x,y) \times \frac{x}{\left\vert x \right\vert} - i \, k \, G_{k}(x,y) \right) = 0.
\end{equation*}
Now, for fixed $z$, we split $G_{k}(\cdot,z)$ as: 
\begin{equation}\label{ASHESP}
G_{k}(\cdot,z) = \Upsilon_{k}(\cdot,z) + \Gamma(\cdot,z),
\end{equation} 
where $\Upsilon_{k}(\cdot,z)$ is solution of 
\begin{equation*}
\underset{y}{\nabla} \times \underset{y}{\nabla} \times \Upsilon_{k}(y,z) -  k^{2} \, \Upsilon_{k}(y,z) = \underset{z}{\delta}(y) I,
\end{equation*}
satisfying the Silver-M\"{u}ller radiation condition at infinity: 
\begin{equation*}
\lim_{\left\vert x \right\vert \rightarrow +\infty} \;\; \left\vert x \right\vert \;\; \left( \underset{y}{\nabla} \times \Upsilon_{k}(x,z) \times \frac{x}{\left\vert x \right\vert} - i \, k \, \Upsilon_{k}(x,z) \right) = 0.
\end{equation*}
It is known from the literature that 
\begin{equation}\label{CC0}
\Upsilon_{k}(y,z) = \Phi_{k}(y,z) \, I + \frac{1}{k^{2}} \, \underset{y}{\nabla}  \underset{y}{\nabla} \cdot (\Phi_{k}(y,z) \, I),
\end{equation} 
or, using the fact that $\nabla \times \nabla \times () + \Delta () = \nabla \nabla \cdot ()$ and $\Phi_{k}$ is the fundamental solution for the Helmholtz equation, in the following form
\begin{equation}\label{CC}
\Upsilon_{k}(y,z)  = \frac{1}{k^{2}} \, \underset{y}{\nabla} \times \underset{y}{\nabla} \times (\Phi_{k}(y,z) \, I) - \frac{1}{k^{2}} \, \underset{z}{\delta}(y) \, I.
\end{equation}
By construction $\Gamma(\cdot,z)$, needless to say that satisfy the Silver-M\"{u}ller radiation condition at infinity, will be solution of
\begin{eqnarray}\label{ADZKB}
\nonumber
\underset{y}{\nabla} \times \underset{y}{\nabla} \times \Gamma(y,z) - \omega^{2} \, \, \bm{n}^{2}_{0}(y) \, \Gamma(y,z) &=& \left(\omega^{2} \, \, \bm{n}^{2}_{0}(y) - k^{2} \right) \, \Upsilon_{k}(y,z) \\
&\overset{(\ref{CC})}{=}& \frac{\left(\omega^{2} \, \, \bm{n}^{2}_{0}(y) - k^{2} \right)}{k^{2}} \,\left[ \underset{y}{\nabla} \times \underset{y}{\nabla} \times (\Phi_{k}(y,z) \, I) -   \underset{z}{\delta}(y) \, I \right].
\end{eqnarray} 
This suggest us to split $\Gamma(\cdot,z)$ into two parts $\Gamma(\cdot,z) = \Gamma_{1}(\cdot,z) + \Gamma^{\delta}(\cdot,z)$, where: 
\begin{align}\label{KBFRK}
   \begin{cases}
      \underset{y}{\nabla} \times \underset{y}{\nabla} \times \Gamma_{1}(y,z) - \omega^{2} \, \, \bm{n}^{2}_{0}(y) \, \Gamma_{1}(y,z) & = \frac{\left(\omega^{2} \, \, \bm{n}^{2}_{0}(y) - k^{2} \right)}{k^{2}} \,\underset{y}{\nabla} \times \underset{y}{\nabla} \times (\Phi_{k}(y,z) \, I) \text{\quad for $ y \neq z$} \\
      \underset{y}{\nabla} \times \underset{y}{\nabla} \times \Gamma^{\delta}(y,z) - \omega^{2} \, \, \bm{n}^{2}_{0}(y) \, \Gamma^{\delta}(y,z) & =- \frac{\left(\omega^{2} \, \, \bm{n}^{2}_{0}(y) - k^{2} \right)}{k^{2}} \, \underset{z}{\delta}(y) \, I=- \frac{\left(\omega^{2} \, \, \bm{n}^{2}_{0}(z) - k^{2} \right)}{k^{2}} \, \underset{z}{\delta}(y) \, I
    \end{cases} 
    . 
\end{align}
As done in the previous section, we can prove that $\Gamma_{1}(\cdot,z)$ is in $\mathbb{L}^{p}(\Omega)$, with $p = \frac{3}{2} - \delta$. This justify the existence, at least in distributional sense, of integrals containing the kernel $\Gamma_{1}(\cdot,\cdot)$. Moreover, straightforward calculation allow us to deduce that: 
\begin{equation*}
\Gamma^{\delta}(y,z) = - \frac{\left(\omega^{2} \, \, \bm{n}^{2}_{0}(z) - k^{2} \right)}{k^{2}} \, G_{k}(y,z).
\end{equation*}
Using this representation we rewrite $(\ref{ASHESP})$ as:  
\begin{equation*}
G_{k}(\cdot,z) = \frac{k^{2}}{\omega^{2} \, \bm{n}^{2}_{0}(z) } \, \left[ \Upsilon_{k}(\cdot ,z) + \Gamma_{1}(\cdot ,z) \right].
\end{equation*}
Consequently, $(\ref{FLKT})$ becomes,   
\begin{equation*}
u_{1}(x) + k^{2}  \, \int_{D} \Upsilon_{k}(x ,y) \cdot u_{1}(y) \, \left[ 1-\frac{\bm{n}^{2}(y)}{\bm{n}^{2}_{0}(y)} \right] \, dy + k^{2}  \, \int_{D}  \Gamma_{1}(x,y)  \cdot u_{1}(y) \, \left[ 1-\frac{\bm{n}^{2}(y)}{\bm{n}^{2}_{0}(y)} \right] \, dy = u_{0}(x).
\end{equation*}
Thanks to $(\ref{CC0})$, the definition of the operators $N^{k}(\cdot)$ and $\nabla M^{k}()$, see $(\ref{DefNDefMk})$, we rewrite the previous equation as: 
\begin{eqnarray}\label{DefU1}
\nonumber
u_{1}(x) &+& k^{2}  \, N^{k}\left( u_{1}(\cdot) \, \left[ 1-\frac{\bm{n}^{2}(\cdot)}{\bm{n}^{2}_{0}(\cdot)} \right] \right)(x) - \nabla M^{k}\left( u_{1}(\cdot) \, \left[ 1-\frac{\bm{n}^{2}(\cdot)}{\bm{n}^{2}_{0}(\cdot)} \right] \right)(x) \\ &+& k^{2}  \, \int_{D}  \Gamma_{1}(x,y)  \cdot u_{1}(y) \, \left[ 1-\frac{\bm{n}^{2}(y)}{\bm{n}^{2}_{0}(y)} \right] \, dy = u_{0}(x).
\end{eqnarray}
Now, we check that $u_{1}(\cdot)$, defined by $(\ref{DefU1})$, is solution in the distributional sense of the Maxwell's system 
\begin{equation*}
\nabla \times \nabla \times u_{1}(x) - \omega^{2} \, \bm{n}^{2}(x) \, u_{1}(x) = 0, \qquad x \in \mathbb{R}^{3}.  
\end{equation*}  
For this, taking the $\nabla \times \nabla \times (\cdot)$ on both sides of $(\ref{DefU1})$ and the inner product with respect to test function  $\phi \in \mathcal{D}\left( \mathbb{R}^{3}\right)$, to obtain:  
\begin{eqnarray}\label{DefU1CurlCurl}
\nonumber
\langle \nabla \times \nabla \times u_{1}; \phi \rangle &+& k^{2}  \, \langle \nabla \times \nabla \times N^{k}\left( u_{1}(\cdot) \, \left[ 1-\frac{\bm{n}^{2}(\cdot)}{\bm{n}^{2}_{0}(\cdot)} \right] \right); \phi \rangle  \\ &+& k^{2} \langle \nabla \times \nabla \times \int_{D}  \Gamma_{1}(\cdot ,y)  \cdot u_{1}(y) \, \left[ 1-\frac{\bm{n}^{2}(y)}{\bm{n}^{2}_{0}(y)} \right] \, dy; \phi \rangle = \langle \nabla \times \nabla \times u_{0} ; \phi \rangle.
\end{eqnarray}
Here, we need to analyze the two last terms on the left hand side of the previous equation. 
\begin{enumerate}
\item Analyzing the term: 
\begin{eqnarray}\label{TermJ1}
\nonumber
\bm{J_{1}} &:=& \langle \nabla \times \nabla \times N^{k}\left( u_{1}(\cdot) \, \left[ 1-\frac{\bm{n}^{2}(\cdot)}{\bm{n}^{2}_{0}(\cdot)} \right] \right); \phi \rangle \\ \nonumber &=& \langle  \left( - \Delta + \nabla \div \right) N^{k}\left( u_{1}(\cdot) \, \left[ 1-\frac{\bm{n}^{2}(\cdot)}{\bm{n}^{2}_{0}(\cdot)} \right] \right); \phi \rangle \\\nonumber
&=& k^{2} \langle  N^{k}\left( u_{1}(\cdot) \, \left[ 1-\frac{\bm{n}^{2}(\cdot)}{\bm{n}^{2}_{0}(\cdot)} \right] \right); \phi \rangle + \langle   u_{1}; \phi \rangle \\\nonumber
&-& \langle   u_{1}(\cdot) \, \frac{\bm{n}^{2}(\cdot)}{\bm{n}^{2}_{0}(\cdot)} ; \phi \rangle - \langle  \nabla M^{k}\left( u_{1}(\cdot) \, \left[ 1-\frac{\bm{n}^{2}(\cdot)}{\bm{n}^{2}_{0}(\cdot)} \right] \right); \phi \rangle  \\
&\overset{(\ref{DefU1})}{=}&  \langle   u_{0}; \phi \rangle -  \langle   u_{1}(\cdot) \, \frac{\bm{n}^{2}(\cdot)}{\bm{n}^{2}_{0}(\cdot)} ; \phi \rangle - k^{2}  \langle  \int_{D} \Gamma_{1}(\cdot,y) \cdot u_{1}(y) \, \left[ 1-\frac{\bm{n}^{2}(y)}{\bm{n}^{2}_{0}(y)} \right] \, dy; \phi \rangle. 
\end{eqnarray}
\item Analyzing the term:
\begin{eqnarray*}
\bf{J_{2}} &:=& \langle \nabla \times \nabla \times \int_{D}  \Gamma_{1}(\cdot ,y)  \cdot u_{1}(y) \, \left[ 1-\frac{\bm{n}^{2}(y)}{\bm{n}^{2}_{0}(y)} \right] \, dy; \phi \rangle \\
&=& \langle  \int_{D}  \Gamma_{1}(\cdot ,y)  \cdot u_{1}(y) \, \left[ 1-\frac{\bm{n}^{2}(y)}{\bm{n}^{2}_{0}(y)} \right] \, dy; \nabla \times \nabla \times \phi \rangle\\
&:=& \int_{\mathbb{R}^{3}} \, \int_{D}  \Gamma_{1}(x,y)  \cdot u_{1}(y) \, \left[ 1-\frac{\bm{n}^{2}(y)}{\bm{n}^{2}_{0}(y)} \right] \, dy \cdot \nabla \times \nabla \times \overline{\phi}(x) \, dx \\
&=& \int_{D}  u_{1}(y) \, \left[ 1-\frac{\bm{n}^{2}(y)}{\bm{n}^{2}_{0}(y)} \right]  \cdot \int_{\mathbb{R}^{3}}  \underset{x}{\nabla} \times \underset{x}{\nabla} \times \Gamma_{1}(x,y)  \cdot \overline{\phi}(x) \, dx\, dy. 
\end{eqnarray*}
Gathering $(\ref{ADZKB})$ with $(\ref{KBFRK})$ to 
obtain: 
\begin{eqnarray*}
\bf{J_{2}} &=& \omega^{2} \, \int_{D}  u_{1}(y) \, \left[ 1-\frac{\bm{n}^{2}(y)}{\bm{n}^{2}_{0}(y)} \right]  \cdot \int_{\mathbb{R}^{3}} \Gamma_{1}(x,y) \cdot \bm{n}^{2}_{0}(x) \, \overline{\phi}(x) \, dx \, dy \\
&+& \omega^{2} \, \int_{D}  u_{1}(y) \, \left[ 1-\frac{\bm{n}^{2}(y)}{\bm{n}^{2}_{0}(y)} \right]  \cdot \left[ N^{k}\left( \bm{n}^{2}_{0}(\cdot)   \overline{\phi} \right)(y) - \frac{1}{k^{2}} \nabla M^{k}\left( \bm{n}^{2}_{0}(\cdot)   \overline{\phi} \right)(y) \right] dy \\
&-& k^{2} \int_{D}  u_{1}(y) \, \left[ 1-\frac{\bm{n}^{2}(y)}{\bm{n}^{2}_{0}(y)} \right]  \cdot \left[ N^{k}\left( \overline{\phi} \right)(y) - \frac{1}{k^{2}} \nabla M^{k}\left( \overline{\phi} \right)(y) \right] dy \\
&+& \int_{D}  u_{1}(y) \, \left[ 1-\frac{\bm{n}^{2}(y)}{\bm{n}^{2}_{0}(y)} \right]  \cdot  \frac{\left( \omega^{2} \,\bm{n}^{2}_{0}(y) - k^{2}\right)}{k^{2}}  \overline{\phi}(y)  \, dy. 
\end{eqnarray*}
Taking the adjoint of the operators $N^{k}(\cdot)$ and $\nabla M^{k}(\cdot)$ and the equation  
$(\ref{DefU1})$ to obtain: 
\begin{eqnarray}\label{TermJ2}
\nonumber
\bf{J_{2}} &=& k^{2} \langle \int_{D} \Gamma_{1}(\cdot ,y) \cdot u_{1}(y) \, \left[ 1-\frac{\bm{n}^{2}(y)}{\bm{n}^{2}_{0}(y)} \right] dy; \phi \rangle \\
&+& \frac{\omega^{2}}{k^{2}} \, \langle u_{0}\, \bm{n}^{2}_{0}(\cdot);   \phi \rangle  - \langle  u_{0}; \phi \rangle + \langle  u_{1} \, \left[ \frac{\bm{n}^{2}(\cdot)}{\bm{n}^{2}_{0}(\cdot)} - \frac{\omega^{2} \bm{n}^{2}(\cdot)}{k^{2}} \right]; \phi \rangle. 
\end{eqnarray}
\end{enumerate} 
Plugging the expression of $\bf{J_{1}}$ and $\bf{J_{2}}$, see $(\ref{TermJ1})$ and $(\ref{TermJ2})$, into the equation $(\ref{DefU1CurlCurl})$  to obtain: 
\begin{equation*}
\langle \nabla \times \nabla \times u_{1}(\cdot) - \omega^{2} \, \bm{n}^{2}(\cdot) \, u_{1}(\cdot); \phi \rangle  = \langle \nabla \times \nabla \times u_{0}(\cdot) - \omega^{2} \, \bm{n}^{2}_{0}(\cdot) \, u_{0}(\cdot) ; \phi \rangle =0.
\end{equation*}
This was to be proved.  

\section{Proof of \textbf{Proposition $\ref{Proposition2.1}$}}\label{proof-pro-2.1}

\subsection{A priori estimates}\label{Section4}\
By $u_{0}(\cdot)$ we denote the incident electromagnetic field in the absence of particles inside the domain $\Omega$ which is of solenoidal type, i.e. $\div(u_{0}) = 0$, and by $u_{1}(\cdot)$ the electromagnetic field after injecting one particle inside $\Omega$.\\
Now, we start with the following Lippmann-Schwinger integral equation: 
\begin{equation*}
u_{1}(x) +  \omega^{2} \, \mu \, \left(\epsilon_{0}(z) - \epsilon_{p} \right) \, \int_{D} \,G_{k}(x,y) \cdot u_{1}(y) \, dy = u_{0}(x), \qquad x \in \,  D. 
\end{equation*} 
Thanks to the expansion formula for the Green kernel $G_{k}(\cdot,\cdot)$, see for instance $(\ref{DecompositionGreenKernel})$, we rewrite the previous equation as: 
\begin{equation}\label{MLPASM}
u_{1}(x) + \omega^{2} \, \mu \, \left(\epsilon_{0}(z) - \epsilon_{p} \right) \,  \int_{D} \Upsilon(x,y) \cdot u_{1}(y) \, dy = u_{0}(x) + Err_{\Gamma}(x),
\end{equation}
where the remainder part $Err_{\Gamma}(x)$, for $x \in D$, is given by 
\begin{equation*}\label{YGC}
Err_{\Gamma}(x) := - \omega^{2} \, \mu \, \left(\epsilon_{0}(z) - \epsilon_{p} \right) \, \int_{D}  \Gamma(x,y) \cdot u_{1}(y) \, dy.
\end{equation*} 
Using the definition of the Magnetization operator $\nabla M(\cdot)$, see $(\ref{DefNDefM})$, the definition of the kernel $\Upsilon(\cdot,\cdot)$, see $(\ref{AI-Homogeneous})$, the definition of the function $\eta(\cdot)$, see $(\ref{Defeta})$, and scaling the equation $(\ref{MLPASM})$ to the domain $B$, to get  
\begin{equation*}\label{Cavalaclaque?}
\tilde{u}_{1}(x) - \eta(z) \, \nabla M(\tilde{u}_{1})(x) = \tilde{u}_{0}(x) + \widetilde{Err}_{\Gamma}(x). 
\end{equation*} 
In the sequel, we project the previous equation in each subspace given by $(\ref{L2-decomposition})$. 
\begin{enumerate}
\item Taking the $\mathbb{L}^{2}(B)$-inner product with respect to $e^{(1)}_{n}(\cdot)$,
\begin{equation*}
\langle \tilde{u}_{1}, e^{(1)}_{n} \rangle - \eta(z) \, \langle \nabla M(\tilde{u}_{1}), e^{(1)}_{n} \rangle = \langle \tilde{u}_{0}, e^{(1)}_{n} \rangle +\langle \widetilde{Err}_{\Gamma}, e^{(1)}_{n} \rangle. 
\end{equation*} 
Using the fact that
$\nabla M\left( e^{(1)}_{n} \right) = 0$, see Lemma \ref{MagnetizationOperator}, to reduce the previous equation into: 
\begin{equation*}
\langle \tilde{u}_{1}, e^{(1)}_{n} \rangle = \langle \tilde{u}_{0}, e^{(1)}_{n} \rangle +\langle \widetilde{Err}_{\Gamma}, e^{(1)}_{n} \rangle. 
\end{equation*} 
After taking the modulus, we obtain
\begin{equation*}
\left\vert \langle \tilde{u}_{1}, e^{(1)}_{n} \rangle \right\vert  \lesssim  \left\vert \langle \tilde{u}_{0}, e^{(1)}_{n} \rangle \right\vert  + \left\vert \langle \widetilde{Err}_{\Gamma}, e^{(1)}_{n} \rangle \right\vert,
\end{equation*}
then
\begin{eqnarray}\label{AOT}
\nonumber
\sum_{n} \left\vert \langle \tilde{u}_{1}, e^{(1)}_{n} \rangle \right\vert^{2}  & \lesssim & \sum_{n} \left\vert \langle \tilde{u}_{0}, e^{(1)}_{n} \rangle \right\vert^{2} + \sum_{n} \left\vert \langle \widetilde{Err}_{\Gamma}, e^{(1)}_{n} \rangle \right\vert^{2} \\
& \overset{(\ref{EstimationErrGamma1})}{\lesssim} & \sum_{n} \left\vert \langle \tilde{u}_{0}, e^{(1)}_{n} \rangle \right\vert^{2} + a^{\frac{2(6-5\delta-2\delta^{2})}{(3-2\delta)}} \,  \left\Vert  \tilde{u}_{1} \right\Vert^{2}_{\mathbb{L}^{2}(B)}. 
\end{eqnarray}
\item Taking the $\mathbb{L}^{2}(B)$-inner product with respect
to $e^{(2)}_{n}(\cdot)$,
\begin{equation*}
\langle \tilde{u}_{1}, e^{(2)}_{n} \rangle - \eta(z) \, \langle \nabla M(\tilde{u}_{1}), e^{(2)}_{n} \rangle  = \langle \tilde{u}_{0}, e^{(2)}_{n} \rangle + \langle \widetilde{Err}_{\Gamma}; e^{(2)}_{n} \rangle.
\end{equation*}
Since $\tilde{u}_{0} \in \mathbb{H}\left( \div = 0 \right) = \left( \mathbb{H}_{0}\left( \div = 0 \right) \oplus \nabla \mathcal{H}armonic \right) \perp \mathbb{H}_{0}\left( Curl = 0 \right)$ and thanks to Lemma \ref{MagnetizationOperator}, the previous equation will be reduced to 
\begin{equation*}
\frac{\epsilon_{p}}{\epsilon_{0}(z)} \langle \tilde{u}_{1} , e_{n}^{(2)} \rangle =   \langle \widetilde{Err}_{\Gamma}, e_{n}^{(2)} \rangle,
\end{equation*}
and then 
\begin{equation}\label{2V111}
\sum_{n} \left\vert \langle \tilde{u}_{1} , e_{n}^{(2)} \rangle \right\vert^{2} = \left\vert \frac{ \epsilon_{0}(z)}{\epsilon_{p}} \right\vert^{2} \sum_{n} \left\vert \langle  \widetilde{Err}_{\Gamma}, e^{(2)}_{n} \rangle \right\vert^{2} \overset{(\ref{RDSN})}{=} \mathcal{O}\left(a^{2} \, \left\Vert \tilde{u}_{1} \right\Vert^{2}_{\mathbb{L}^{2}(B)} \right).
\end{equation}
\item Taking the $\mathbb{L}^{2}(B)$-inner product with respect to $e^{(3)}_{n}(\cdot)$,
\begin{equation*}
\langle \tilde{u}_{1}, e^{(3)}_{n} \rangle - \eta(z) \,  \langle \nabla M(\tilde{u}_{1}), e^{(3)}_{n} \rangle  = \langle \tilde{u}_{0}, e^{(3)}_{n} \rangle + \langle \widetilde{Err}_{\Gamma}, e^{(3)}_{n} \rangle, 
\end{equation*} 
which can be rewritten, knowing that $\nabla M( e^{(3)}_{n}) = \lambda^{(3)}_{n} \,  e^{(3)}_{n}$, as
\begin{equation*}
\langle \tilde{u}_{1}, e^{(3)}_{n} \rangle \left( 1 - \eta(z) \lambda^{(3)}_{n} \right) \overset{(\ref{Defeta})}{=} \langle \tilde{u}_{1}, e^{(3)}_{n} \rangle \left( 1 - \frac{\left(\epsilon_{0}(z) - \epsilon_{p} \right)}{\epsilon_{0}(z)} \lambda^{(3)}_{n} \right) = \langle \tilde{u}_{0}, e^{(3)}_{n} \rangle + \langle  \widetilde{Err}_{\Gamma}, e^{(3)}_{n} \rangle. 
\end{equation*} 
Afterwards, 
\begin{equation*}
\left\vert \langle \tilde{u}_{1}, e^{(3)}_{n} \rangle \right\vert  \lesssim  \frac{ \left\vert \langle \tilde{u}_{0}, e^{(3)}_{n} \rangle \right\vert +  \left\vert \langle  \widetilde{Err}_{\Gamma}, e^{(3)}_{n} \rangle \right\vert }{\left\vert \epsilon_{0}(z) -  \left(\epsilon_{0}(z) - \epsilon_{p} \right) \lambda^{(3)}_{n} \right\vert},
\end{equation*}
hence 
\begin{equation*}
\sum_{n} \left\vert \langle \tilde{u}_{1}, e^{(3)}_{n} \rangle \right\vert^{2}  \lesssim \sum_{n} \frac{  \left\vert \langle \tilde{u}_{0}, e^{(3)}_{n} \rangle \right\vert^{2} + \left\vert \langle  \widetilde{Err}_{\Gamma}, e^{(3)}_{n} \rangle \right\vert^{2} }{\left\vert \epsilon_{0}(z) - \left(\epsilon_{0}(z) - \epsilon_{p} \right) \, \lambda^{(3)}_{n} \right\vert^{2}}.
\end{equation*}
As we are approaching the $\lambda^{(3)}_{n_{0}}$ eigenvalue we have:
\begin{equation}\label{approximationoflambdan0}
\left\vert \epsilon_{0}(z) -  \left(\epsilon_{0}(z) - \epsilon_{p} \right) \, \lambda^{(3)}_{n} \right\vert \sim \begin{cases} a^{h} & \text{if} \quad n = n_{0} \\ 1 & \text{if} \quad n \neq n_{0}
\end{cases}.
\end{equation}
Then,
\begin{equation*}
\sum_{n} \left\vert \langle \tilde{u}_{1}, e^{(3)}_{n} \rangle \right\vert^{2}  \lesssim a^{-2h} \; \sum_{n}  \left\vert \langle \tilde{u}_{0}, e^{(3)}_{n} \rangle \right\vert^{2}  + a^{-2h} \sum_{n} \left\vert \langle  \widetilde{Err}_{\Gamma}, e^{(3)}_{n} \rangle \right\vert^{2}. 
\end{equation*}
Now, using the relation $(\ref{TMCAA})$, we obtain: 
\begin{equation}\label{3V111}
\sum_{n} \left\vert \langle \tilde{u}_{1}, e^{(3)}_{n} \rangle \right\vert^{2}  \lesssim a^{-2h} \; \sum_{n}  \left\vert \langle \tilde{u}_{0}, e^{(3)}_{n} \rangle \right\vert^{2} + a^{2-2h} \, \left\Vert \tilde{u}_{1} \right\Vert^{2}_{\mathbb{L}^{2}(B)}. 
\end{equation}
\end{enumerate}
Combining $(\ref{AOT}), (\ref{2V111})$ and $(\ref{3V111})$ we get finally, under the condition $h < 1$, the following a priori estimate: 
\begin{equation*}\label{4V111}
\left\Vert \tilde{u}_{1} \right\Vert_{\mathbb{L}^{2}(B)}  \lesssim  a^{-h} \, \left\Vert \tilde{u}_{0}  \right\Vert_{\mathbb{L}^{2}(B)}.
\end{equation*}
This was to be demonstrated. \\
The formula $(\ref{2V111})$ shows us the smallness of the resulting electromagnetic field, generated by a solenoidal vector field, in the subspace $\mathbb{H}_{0}\left(Curl = 0 \right)$. The coming lemma, which can be considered as a consequence of Lemma $\ref{Lemmavanishingintegral}$, give us more precisions about the intensity of the incident electromagnetic field.   
\begin{lemma}\label{ProjU012}
For $j=1,2$ the following estimation holds: 
\begin{equation}\label{Norm-P12U0}
\left\Vert \overset{j}{\mathbb{P}} \left(  \tilde{u}_{0} \right) \right\Vert_{\mathbb{L}^{2}(B)} = \mathcal{O}\left(a \right).
\end{equation}
\end{lemma}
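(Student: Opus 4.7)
\medskip

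\textbf{Proof plan for Lemma \ref{ProjU012}.} The plan is to exploit the fact that, after scaling to the unit reference ball $B$, the field $\tilde{u}_{0}(x):=u_{0}(z+ax)$ is a small perturbation (of order $a$) of the constant vector $u_{0}(z)$, and that the constant itself is annihilated by the two projections $\overset{1}{\mathbb{P}}$ and $\overset{2}{\mathbb{P}}$. First, using the smoothness of $u_{0}(\cdot)$ on a neighborhood of $z$ (here we rely on the fact that $u_{0}$ is a solution of the background Maxwell problem, hence locally smooth away from the nano-particle), Taylor's formula with integral remainder gives
\begin{equation*}
\tilde{u}_{0}(x)-u_{0}(z) \;=\; a\int_{0}^{1}\nabla u_{0}(z+tax)\cdot x\, dt, \qquad x\in B,
\end{equation*}
so that $\|\tilde{u}_{0}-u_{0}(z)\|_{\mathbb{L}^{2}(B)} \lesssim a\,\|\nabla u_{0}\|_{\mathbb{L}^{\infty}(D)}\,|B|^{1/2}=\mathcal{O}(a)$.

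Next, I would observe that any constant vector $c\in\mathbb{C}^{3}$ belongs to $\nabla\mathcal{H}armonic(B)$, since $c=\nabla(c\cdot x)$ and the linear function $x\mapsto c\cdot x$ is harmonic. Equivalently, appealing to the vanishing–integral identity (this is the content of the referenced Lemma \ref{Lemmavanishingintegral}: for every $v\in\mathbb{H}_{0}(\div=0)\cup\mathbb{H}_{0}(Curl=0)$ one has $\int_{B}v\,dx=0$, as can be checked by integration by parts using $\nu\cdot v=0$ or $\nu\times v=0$ on $\partial B$), the constant $u_{0}(z)$ is orthogonal in $\mathbb{L}^{2}(B)$ to both $\mathbb{H}_{0}(\div=0)$ and $\mathbb{H}_{0}(Curl=0)$. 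Therefore
\begin{equation*}
\overset{1}{\mathbb{P}}\bigl(u_{0}(z)\bigr)=\overset{2}{\mathbb{P}}\bigl(u_{0}(z)\bigr)=0.
\end{equation*}

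Combining the two steps and using that $\overset{j}{\mathbb{P}}$ is an orthogonal projection, hence a contraction on $\mathbb{L}^{2}(B)$, we conclude
\begin{equation*}
\bigl\|\overset{j}{\mathbb{P}}(\tilde{u}_{0})\bigr\|_{\mathbb{L}^{2}(B)}
= \bigl\|\overset{j}{\mathbb{P}}(\tilde{u}_{0}-u_{0}(z))\bigr\|_{\mathbb{L}^{2}(B)}
\leq \|\tilde{u}_{0}-u_{0}(z)\|_{\mathbb{L}^{2}(B)}
=\mathcal{O}(a),
\end{equation*}
for $j=1,2$, which is the claim. There is no real obstacle in this argument; the only point to verify with care is the vanishing–integral identity for the two subspaces $\mathbb{H}_{0}(\div=0)$ and $\mathbb{H}_{0}(Curl=0)$, which is exactly what Lemma \ref{Lemmavanishingintegral} provides. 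The smallness is driven by the scaling: the incident field varies on the macroscopic scale $1$, while $\tilde{u}_{0}$ is its restriction to a particle of size $a$, so its non-constant part is automatically of order $a$, and the leading constant part lives entirely in $\nabla\mathcal{H}armonic(B)$.
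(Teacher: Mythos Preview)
Your proposal is correct and follows essentially the same approach as the paper: Taylor-expand $u_{0}$ around $z$, kill the constant term $u_{0}(z)$ via the vanishing-integral property of Lemma~\ref{Lemmavanishingintegral} (equivalently, $u_{0}(z)\in\nabla\mathcal{H}armonic$), and bound the $\mathcal{O}(a)$ remainder. The only cosmetic difference is that the paper carries out the computation in $D$ via the Parseval expansion $\sum_{n}|\langle u_{0},e^{(j,D)}_{n}\rangle|^{2}$ and scales to $B$ at the end, whereas you scale first and use the contraction property of the orthogonal projection directly.
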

\begin{proof} For $j=1,2$, we have: 
\begin{equation*}
\left\Vert \overset{j}{\mathbb{P}} \left(  u_{0} \right) \right\Vert^{2}_{\mathbb{L}^{2}(D)} = \sum_{n} \left\vert \langle u_{0};e^{(j,D)}_{n} \rangle \right\vert^{2} = \sum_{n} \left\vert \int_{D} u_{0}(x) \cdot e^{(j,D)}_{n}(x) \, dx \right\vert^{2}, 
\end{equation*}
where $\left\{ e^{(j,D)}_{n}(\cdot) \right\}_{n \in \mathbb{N} \atop j=1,2}$ are the orthonormal basis defined in $D$.  
By Taylor expansion, we obtain: 
\begin{eqnarray*}
\left\Vert \overset{j}{\mathbb{P}} \left( u_{0} \right) \right\Vert^{2}_{\mathbb{L}^{2}(D)} &=& \sum_{n} \left\vert \int_{D}  \left(  u_{0}(z) + \int_{0}^{1} \, \nabla u_{0}(z+t(x-z)) \cdot (x-z) \, dt \right) \cdot e^{(j,D)}_{n}(x) \, dx \right\vert^{2} \\ 
& \overset{(\ref{vanishingintegral})}{=} & \sum_{n} \left\vert \int_{D}   \int_{0}^{1} \, \nabla u_{0}(z+t(x-z)) \cdot (x-z) \, dt  \cdot e^{(j,D)}_{n}(x) \, dx \right\vert^{2} \\
& \leq &  \left\Vert \int_{0}^{1} \, \nabla u_{0}(z+t(\cdot -z)) \cdot (\cdot -z) \, dt  \right\Vert^{2}_{\mathbb{L}^{2}(D)} = \mathcal{O}\left( a^{5} \right).
\end{eqnarray*}
We skip the remainder of the proof because it consist in scaling  to $B$. 
\end{proof}
\begin{remark} Fortunately, similar relation to $(\ref{Norm-P12U0})$ cannot be true for $j=3$ since, in general, $\int_{B}  e^{(3)}_{n}(x) dx \neq 0$. This last properties is the key step in the proof of Lemma \ref{ProjU012}.    
\end{remark}

\subsection{Estimate the $\mathbb{L}^{2}(B)$-norm of $\widetilde{Err_{\Gamma}}$}

We have need to estimate the $\mathbb{L}^{2}(B)$-norm of $\widetilde{Err_{\Gamma}}$. For this, we project this expression into each subspace decomposing the $\mathbb{L}^{2}(B)$-space.   
\begin{enumerate}
\item Estimation of $\underset{n}{\sum} \left\vert \langle \widetilde{Err_{\Gamma}}; e^{(1)}_{n} \rangle \right\vert^{2}$. \\
From $(\ref{Err-Gamma})$, we have\footnote{Recall that, with respect to the size $a$, we have $ - \, \omega^{2} \, \mu \, (\epsilon_{0}(z)-\epsilon_{p}) \sim 1.$} 
\begin{eqnarray*}
Err_{\Gamma}(x) &:=& - \omega^{2} \, \mu \, \int_{D} \Gamma(x,y) \cdot u_{1}(y) \, (\epsilon_{0}(y) - \epsilon_{p}) \, dy \\
& \overset{(\ref{AY})}{=} &  \int_{D} \underset{x}{\nabla}  \underset{x}{\nabla} M\left( \Phi_{0}(\cdot, y) \nabla \epsilon_{0}(y) \right)(x) \cdot u_{1}(y)  \, \frac{(\epsilon_{0}(y) - \epsilon_{p})}{\left(\epsilon_{0}(y)\right)^{2}} \, dy \\ &-& \omega^{2} \, \mu \,  \int_{D} W_{4}(x,y) \cdot u_{1}(y)  \,  (\epsilon_{0}(y) - \epsilon_{p}) \, dy,
\end{eqnarray*}
or, after an integration by parts,
\begin{eqnarray*}
Err_{\Gamma}(x) & = & - \nabla \, \div \, N \left( \div N\left(\nabla \epsilon_{0} \otimes u_{1} \frac{(\epsilon_{0} - \epsilon_{p})}{\left(\epsilon_{0}\right)^{2}} \right) \right)(x) \\ &+& \nabla \, \div \, SL\left( \nu \cdot N\left(\nabla \epsilon_{0} \otimes u_{1} \frac{(\epsilon_{0} - \epsilon_{p})}{\left(\epsilon_{0}\right)^{2}} \right) \right)(x) \\ &-& \omega^{2} \, \mu \,  \int_{D} W_{4}(x,y) \cdot u_{1}(y) \,  (\epsilon_{0}(y) - \epsilon_{p}) \, dy.
\end{eqnarray*}
Set 
\begin{equation}\label{Defphi1}
\phi_{1} := - \div \, N \left( \div N\left(\nabla \epsilon \otimes u_{1} \frac{(\epsilon_{0} - \epsilon_{p})}{\left(\epsilon_{0}\right)^{2}} \right) \right) +  \div \, SL\left( \nu \cdot N\left(\nabla \epsilon \otimes u_{1} \frac{(\epsilon_{0} - \epsilon_{p})}{\left(\epsilon_{0}\right)^{2}} \right) \right)
\end{equation}
and $\phi_{2}(x) := \int_{D} W_{4}(x,y) \cdot u_{1}(y)\, dy$,
 then 
\begin{equation}\label{ComapctErrGamma}
Err_{\Gamma}(x)  = \nabla \phi_{1}(x) - \omega^{2} \, \mu \,  \phi_{2}(x).
\end{equation}
Remark that $\nabla \phi_{1} \in \left( \mathbb{H}_{0}\left(Curl=0 \right) \overset{\perp}{\oplus} \nabla \mathcal{H}armonic \right) \perp \mathbb{H}_{0}\left(\div=0 \right)$, then regardless on it's scale we get $\langle \widetilde{\nabla \phi_{1}} ; e^{(1)}_{n} \rangle = 0$ and in that case $\langle \widetilde{Err_{\Gamma}} ; e^{(1)}_{n} \rangle = - \omega^{2} \, \mu \, \langle \widetilde{ \phi_{2}} ; e^{(1)}_{n} \rangle$. Next, let's focus on the scale of $\phi_{2}$. For this, we assume that $W_{4}(x,y) \simeq \, \left\vert x-y \right\vert^{-\alpha}$, where $\alpha > 0$ is chosen such that $W_{4}(\cdot,y) \in \mathbb{L}^{\frac{3(3-2\delta)}{(3+2\delta)}}(D)$. Basic calculus on the integrability of $W_{4}$ allows us to fix $\alpha = \frac{3+2\delta}{3-2\delta} - \delta$ and then,  
\begin{equation}\label{scalephi2}
\widetilde{\phi_{2}}(x) = a^{\frac{6-5\delta-2\delta^{2}}{(3-2\delta)}} \, \int_{B} \widetilde{W_{4}}(x,y) \cdot \tilde{u}_{1}(y) \, dy.
\end{equation} 
Finally, 
\begin{eqnarray}\label{EstimationErrGamma1}
\nonumber
\sum_{n} \left\vert\langle \widetilde{Err_{\Gamma}} ; e^{(1)}_{n} \rangle \right\vert^{2} = \omega^{4} \, \mu^{2} \, \sum_{n} \left\vert\langle \widetilde{\phi_{2}} ; e^{(1)}_{n} \rangle \right\vert^{2} &=& a^{\frac{2(6-5\delta-2\delta^{2})}{(3-2\delta)}} \,  \sum_{n} \left\vert\langle \int_{B} \widetilde{W_{4}}(\cdot ,y) \cdot \tilde{u}_{1}(y) \, dy ; e^{(1)}_{n} \rangle \right\vert^{2} \\ \nonumber
& \leq & a^{\frac{2(6-5\delta-2\delta^{2})}{(3-2\delta)}} \,  \left\Vert \int_{B} \widetilde{W_{4}}(\cdot ,y) \cdot \tilde{u}_{1}(y) \, dy  \right\Vert^{2}_{\mathbb{L}^{2}(B)} \\
& = & \mathcal{O}\left( a^{\frac{2(6-5\delta-2\delta^{2})}{(3-2\delta)}} \,  \left\Vert  \tilde{u}_{1} \right\Vert^{2}_{\mathbb{L}^{2}(B)}\right).
\end{eqnarray}
\item Estimation of $\underset{n}{\sum} \left\vert \langle e^{(2)}_{n}; \widetilde{Err}_{\Gamma} \rangle \right\vert^{2}$.\\
Let's recall from $(\ref{ComapctErrGamma})$ that we have
$Err_{\Gamma}(x) =   \, \nabla \phi_{1}(x) - \omega^{2} \, \mu \,  \phi_{2}(x)$,
then, after scaling to the domain $B$ and taking the inner product with respect to $e^{(2)}_{n}(\cdot)$, we get
\begin{equation}\label{TMCA}
\langle \widetilde{Err}_{\Gamma} ;e^{(2)}_{n} \rangle  = \langle \widetilde{\nabla \phi}_{1}; e^{(2)}_{n} \rangle - \omega^{2} \, \mu \,  \langle \widetilde{\phi}_{2} ;e^{(2)}_{n} \rangle,
\end{equation}
where $\widetilde{\phi}_{2}(\cdot)$ is the vector field given by $(\ref{scalephi2})$ and by scaling $(\ref{Defphi1})$ we obtain 
\begin{equation}\label{LAMLM}
\widetilde{\phi}_{1} = - a^{2} \, \div \, N \left( \div N\left( \widetilde{u^{\star}_{1}} \right) \right) + a^{3} \,  \div \, SL\left( \nu \cdot N\left( \widetilde{ u^{\star}_{1}}  \right) \right)
\end{equation}
where 
\begin{equation}\label{AATNMB}
u^{\star}_{1} = \nabla \epsilon \otimes u_{1} \frac{(\epsilon_{0} - \epsilon_{p})}{\left(\epsilon_{0}\right)^{2}}.
\end{equation}
Clearly the term $\langle \widetilde{\phi}_{2} ;e^{(2)}_{n} \rangle$ is negligible compared to $\langle \widetilde{\nabla \phi}_{1}; e^{(2)}_{n} \rangle$, and thanks to the equation $(\ref{LAMLM})$ we approximate $(\ref{TMCA})$ as,   
\begin{equation*}
\langle \widetilde{Err}_{\Gamma} ;e^{(2)}_{n} \rangle  \simeq  - a \, \langle \nabla  \div \, N \left( \div N\left( \widetilde{u^{\star}_{1}} \right) \right); e^{(2)}_{n} \rangle +  a^{2} \, \langle \nabla  \div \, SL\left( \nu \cdot N\left( \widetilde{ u^{\star}_{1}}  \right) \right); e^{(2)}_{n} \rangle.
\end{equation*}
By taking the square modulus and then the series with respect to the index $n$, we get: 
\begin{eqnarray*}
\sum_{n} \left\vert  \langle \widetilde{Err}_{\Gamma} ;e^{(2)}_{n} \rangle  \right\vert^{2} & \lesssim &  a^{2} \,\sum_{n} \left\vert \langle \nabla  \div \, N \left( \div N\left( \widetilde{u^{\star}_{1}} \right) \right); e^{(2)}_{n} \rangle \right\vert^{2} \\ &+& a^{4} \,\sum_{n} \left\vert \langle \nabla  \div \, SL\left( \nu \cdot N\left( \widetilde{ u^{\star}_{1}}  \right) \right); e^{(2)}_{n} \rangle \right\vert^{2}, 
\end{eqnarray*}
hence, 
\begin{equation*}
\sum_{n} \left\vert  \langle \widetilde{Err}_{\Gamma} ;e^{(2)}_{n} \rangle  \right\vert^{2}  \lesssim   a^{2} \, \left\Vert \nabla \nabla \cdot N \left( \div N\left( \widetilde{u^{\star}_{1}} \right) \right)  \right\Vert^{2}_{\mathbb{L}^{2}(B)}+a^{4}  \left\Vert  \nabla  \div \, SL\left( \nu \cdot N\left( \widetilde{ u^{\star}_{1}}  \right) \right) \right\Vert^{2}_{\mathbb{L}^{2}(B)}.
\end{equation*}
Thanks to Calderon-Zygmund inequality and the continuity of the operator $\nabla  \div \, SL(\cdot)$ we deduce that:
\begin{equation*}
\sum_{n} \left\vert  \langle \widetilde{Err}_{\Gamma} ;e^{(2)}_{n} \rangle  \right\vert^{2}  \lesssim  a^{2} \, \left\Vert  \div N\left( \widetilde{u^{\star}_{1}} \right)  \right\Vert^{2}_{\mathbb{L}^{2}(B)}+a^{4}  \left\Vert   \nu \cdot N\left( \widetilde{ u^{\star}_{1}}  \right)  \right\Vert^{2}_{\mathbb{H}^{1/2}(\partial B)}.
\end{equation*}
Now, using the continuity of $\div N(\cdot)$ operator and the trace operator we obtain: 
\begin{equation*}
\sum_{n} \left\vert  \langle \widetilde{Err}_{\Gamma} ;e^{(2)}_{n} \rangle  \right\vert^{2}  \lesssim  a^{2} \, \left\Vert  \widetilde{u^{\star}_{1}}  \right\Vert^{2}_{\mathbb{L}^{2}(B)}+a^{4}  \left\Vert    N\left( \widetilde{ u^{\star}_{1}}  \right)  \right\Vert^{2}_{\mathbb{H}^{1}( B)}.
\end{equation*}
Now, we use the continuity of the Newtonian operator and the definition of $u^{\star}_{1}$, see $(\ref{AATNMB})$, to reduce the previous inequality to:  
\begin{equation}\label{RDSN}
\sum_{n} \left\vert  \langle \widetilde{Err}_{\Gamma} ;e^{(2)}_{n} \rangle  \right\vert^{2}  = \mathcal{O}\left(  a^{2} \, \left\Vert   \tilde{u}_{1} \right\Vert^{2}_{\mathbb{L}^{2}(B)}  \right) \overset{(\ref{aprioriestimateregimemoderate})}{=}  \mathcal{O}\left(a^{2-2h}\right).
\end{equation}
\item Estimation of $\underset{n}{\sum} \left\vert \langle e^{(3)}_{n}; \widetilde{Err}_{\Gamma} \rangle \right\vert^{2}$.\\
Similarly to $(\ref{TMCA})$, we have
$\langle \widetilde{Err}_{\Gamma} ;e^{(3)}_{n} \rangle  =  \langle \widetilde{\nabla \phi}_{1}; e^{(3)}_{n} \rangle - \omega^{2} \, \mu  \langle \widetilde{\phi}_{2} ;e^{(3)}_{n} \rangle$
and similar to the calculus done for  $\langle \widetilde{Err}_{\Gamma} ;e^{(2)}_{n} \rangle$ allows us to prove that: 
\begin{equation}\label{TMCAA}
\sum_{n} \left\vert  \langle \widetilde{Err}_{\Gamma} ;e^{(3)}_{n} \rangle  \right\vert^{2}  \simeq \sum_{n} \left\vert  \langle \widetilde{Err}_{\Gamma} ;e^{(2)}_{n} \rangle  \right\vert^{2}  = \mathcal{O}\left(a^{2-2h}\right).
\end{equation} 
\end{enumerate}

\subsection{End of the proof of \textbf{Proposition $\ref{Proposition2.1}$}}\label{ADZ}
We split the calculus into two steps. 
\begin{enumerate}
\item Estimation of the scattering matrix $\int_{D} W(x) dx$.\\
We have\footnote{We recall that for two arbitrary vectors $A \in \mathbb{R}^{n}$ and $B \in \mathbb{R}^{m}$ their tonsorial product $A \otimes B$ is the $n \times m$ matrix given by $(A \otimes B)_{ij} = A_{i} \, B_{j}$.}, 
\begin{equation*}
\int_{D} W(x) \, dx = a^{3} \;  \int_{B} \widetilde{W}(x) \, dx = a^{3} \, \sum_{n} \langle \widetilde{W} ; e^{(3)}_{n} \rangle_{\mathbb{L}^{2}(B)} \, \otimes \, \int_{B} e^{(3)}_{n}(x) \, dx, 
\end{equation*}
Now, using the definition of the matrix $W(\cdot)$, see $(\ref{SAH})$, we obtain: 
\begin{equation*}
\int_{D} W(x) \, dx =  a^{3} \, \sum_{n} \langle \left[ I -  \overline{\eta(z)} \,   \nabla M \right]^{-1}\left( I \right) ; e^{(3)}_{n} \rangle_{\mathbb{L}^{2}(B)} \, \otimes \, \int_{B} e^{(3)}_{n}(x) \, dx. 
\end{equation*}
Taking the adjoint operator of
$\left[ I - \overline{\eta(z)} \,   \nabla M \right]^{-1}$,
using the fact that the Magnetization operator $\nabla M$ is self-adjoint,  $\nabla M\left( e^{(3)}_{n} \right) = \lambda^{(3)}_{n} \, e^{(3)}_{n}$ and the definition of the function $\eta(\cdot)$ to obtain: 
\begin{eqnarray*}
\int_{D} W(x) \, dx & = & a^{3} \, \sum_{n} \frac{\epsilon_{0}(z)}{\left( \epsilon_{0}(z) - \left(\epsilon_{0}(z) - \epsilon_{p} \right) \,   \lambda^{(3)}_{n} \right)} \, \int_{B} e^{(3)}_{n}(x) \, dx \, \otimes \, \int_{B} e^{(3)}_{n}(x) \, dx \\
&\overset{(\ref{approximationoflambdan0})}{=}& a^{3} \,\frac{\epsilon_{0}(z)}{\left( \epsilon_{0}(z) - \left(\epsilon_{0}(z) - \epsilon_{p} \right) \,   \lambda^{(3)}_{n_{0}} \right)} \, \, \int_{B} e^{(3)}_{n_{0}}(x) \, dx \, \otimes \, \int_{B} e^{(3)}_{n_{0}}(x) \, dx + \mathcal{O}\left( a^{3} \right).
\end{eqnarray*} 
\item Estimation of $\left\Vert W \right\Vert_{\mathbb{L}^{2}(D)}$.\\
From the definition of $W(\cdot)$, see $(\ref{SAH})$, we have
$W - \overline{\eta(z)} \,   \nabla M \left( W \right) =  \bm{I}$,
then, after taking the inner product with respect to $e^{(j)}_{n}$, where $j=1$ or $j=2$, we obtain:\footnote{The notation $\delta_{2,j}$ is the Kronecker symbol, i.e: $\delta_{2,j} = 1$ for $j=2$ and $\delta_{2,j} = 0$ for $j \neq 2$.} 
\begin{equation*}
\langle W; e^{(j)}_{n} \rangle \left(1 - \overline{\eta(z)}  \, \delta_{2,j}  \right) = \langle  \bm{I}, e^{(j)}_{n} \rangle \overset{(\ref{vanishingintegral})}{=} 0. 
\end{equation*}
This implies $W \in \nabla \mathcal{H}armonic$. Then, 
\begin{eqnarray*}
\left\Vert W \right\Vert^{2}_{\mathbb{L}^{2}(D)} &=& a^{3} \, \left\Vert \widetilde{W} \right\Vert^{2}_{\mathbb{L}^{2}(B)} = a^{3} \; \underset{n}{\sum} \left\vert \langle \widetilde{W} ; e^{(3)}_{n} \rangle \right\vert^{2} \\ & \overset{(\ref{SAH})}{=} & a^{3} \; \underset{n}{\sum} \left\vert \langle \left( I - \overline{\eta(z)} \,  \, \nabla M \right)^{-1}(I) ; e^{(3)}_{n} \rangle \right\vert^{2} \\
& = & a^{3} \; \underset{n}{\sum} \frac{\left\vert \epsilon_{0}(z) \right\vert^{2} \, \left\vert \int_{B} e^{(3)}_{n}(x) \, dx \right\vert^{2}}{\left\vert \epsilon_{0}(z) - (\epsilon_{0}(z) - \epsilon_{p}) \lambda_{n}^{(3)} \right\vert^{2}} \overset{(\ref{approximationoflambdan0})}{=} \mathcal{O}\left( a^{3-2h} \right).
\end{eqnarray*}
Finally, 
\begin{equation*}\label{NormWD}
\left\Vert W \right\Vert_{\mathbb{L}^{2}(D)} = \mathcal{O}\left( a^{\frac{3}{2}-h} \right).
\end{equation*}
\end{enumerate}
This ends the proof of \textbf{Proposition $\ref{Proposition2.1}$}.

\section{Appendices}\label{Appendix}
\subsection{A spectral decomposition of the $\mathbb{L}^{2}(D)$ space}
\
\newline
\\
In our work, we have used a particular, but natural, spectral decomposition of the $\mathbb{L}^{2}(D)$ space.
We endowed this space with a basis constructed from the two main operators appearing in our model, mainly the vector Newtonian and Magnetization operators, defined by $(\ref{DefNDefM})$. Here, we give a justification of that spectral decomposition.   
\bigskip

We use the following direct sum of\footnote{To note short we use the notation $\mathbb{L}^{2}(D)$ instead of $\left( \mathbb{L}^{2}(D) \right)^{3} := \mathbb{L}^{2}(D) \times \mathbb{L}^{2}(D) \times \mathbb{L}^{2}(D).$} $\mathbb{L}^{2}(D)$ vector fields, where we assume that $D$ is sufficiently smooth domain of $\mathbb{R}^{3}$ with outward unit normal $\nu$, see \cite{Dautry-Lions}, page 314, 
\begin{equation}\label{L2-decomposition}
\mathbb{L}^{2}(D)  = \mathbb{H}_{0}\left(\div=0 \right) \overset{\perp}{\oplus} \mathbb{H}_{0}\left(Curl=0 \right) \overset{\perp}{\oplus} \nabla \mathcal{H}armonic
\end{equation}   
where 
\begin{eqnarray*}
\mathbb{H}_{0}\left(\div=0 \right) &:=& \left\lbrace E \in  \mathbb{L}^{2}(D), \, \div E = 0, \, \nu \cdot E = 0 \, \; \text{on} \;\, \partial D \right\rbrace \\
\mathbb{H}_{0}\left(Curl =0 \right) &:=& \left\lbrace E \in \mathbb{L}^{2}(D), \, Curl \, E = 0, \, \nu \times E = 0 \, \; \text{on} \;\, \partial D \right\rbrace
\end{eqnarray*}
and 
\begin{equation*}
\nabla \mathcal{H}armonic := \left\lbrace E: \; E = \nabla \psi, \, \psi \in \mathbb{H}^{1}(D), \, \Delta\psi=0 \right\rbrace.
\end{equation*}
Other decompositions can be found in \cite{Dautry-Lions} and the references therein. The coming proposition gives us more precisions about the choice of $(\ref{L2-decomposition})$ among other available decompositions.
\begin{proposition}\label{PropositionSpectralTheory}
\begin{enumerate}
\item The Newtonian potential operator $N(\cdot)$ admits an orthonormal basis, noted by $\{ e^{(1)}_{n} \}_{n \geq 0}$, on the subspace $\mathbb{H}_{0}\left(\div=0 \right)$ and another one orthonormal basis, noted by $\{ e^{(2)}_{n} \}_{n \geq 0}$, on the subspace $\mathbb{H}_{0}\left(Curl =0 \right)$, i.e:
\begin{equation*}\label{b1b2}
N( e^{(j)}_{n} ) = \lambda^{(j)}_{n} \,\,  e^{(j)}_{n}, \,\, j=1,2. \,\, 
\end{equation*}
\item The Magnetization operator $\nabla M(\cdot)$ admits an orthonormal basis, noted by $\{ e^{(3)}_{n} \}_{n \geq 0}$, on the subspace $\nabla \mathcal{H}armonic $, i.e: 
\begin{equation}\label{b3}
\nabla M( e^{(3)}_{n} ) = \lambda^{(3)}_{n} \,\,  e^{(3)}_{n}.
\end{equation}
\end{enumerate}
\end{proposition}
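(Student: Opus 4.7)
The plan is to recognize each operator, when acting on the appropriate subspace of $\mathbb{L}^{2}(D)$, as a compact self-adjoint operator on a Hilbert space, so that the Hilbert--Schmidt spectral theorem produces the orthonormal eigenbasis directly. The real content lies in verifying compactness, self-adjointness, and the invariance of each subspace under the operator attached to it.

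For part (1), I first observe that $N$, acting component-wise on $(\mathbb{L}^{2}(D))^{3}$ by the real symmetric kernel $\Phi_{0}(x,y)=(4\pi|x-y|)^{-1}$, is Hilbert--Schmidt (hence compact) and self-adjoint: the kernel is square-integrable on $D\times D$ since $|x-y|^{-2}$ is locally integrable in three dimensions, and it is real and symmetric. The heart of the argument is to establish the invariance of $\mathbb{H}_{0}(\div=0)$ and $\mathbb{H}_{0}(Curl=0)$ under $N$. The interior part of the first is immediate from integration by parts: for $f\in\mathbb{H}_{0}(\div=0)$,
\[
\div N(f)(x)=-\int_{\partial D}\Phi_{0}(x,y)\,(\nu\cdot f)(y)\,d\sigma(y)+\int_{D}\Phi_{0}(x,y)\,\div f(y)\,dy=0.
\]
The remaining condition $\nu\cdot N(f)|_{\partial D}=0$ is the delicate point; I would attack it by pairing $N(f)$ with gradients $\nabla\psi$ of harmonic functions in $D$ and exploiting the symmetry of $\Phi_{0}$ together with the hypothesis $\nu\cdot f=0$, supplemented by the harmonic continuation of $N(f)$ to the exterior of $D$. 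A mirror-image argument, swapping $\div$ for $Curl$ and normal for tangential traces, handles $\mathbb{H}_{0}(Curl=0)$. Once invariance is established, the restrictions $N|_{\mathbb{H}_{0}(\div=0)}$ and $N|_{\mathbb{H}_{0}(Curl=0)}$ are compact self-adjoint on closed Hilbert subspaces, and the spectral theorem delivers the orthonormal bases $(e_{n}^{(1)})_{n}$ and $(e_{n}^{(2)})_{n}$.

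For part (2), I would exploit the identity $\nabla M(f)=-\nabla\div N(f)$. For $f=\nabla\phi\in\nabla\mathcal{H}armonic$ (so $\Delta\phi=0$ in $D$) the divergence computation above specializes to $\div N(\nabla\phi)=-SL(\partial_{\nu}\phi)$, so $\nabla M(\nabla\phi)=\nabla SL(\partial_{\nu}\phi)$, which lies in $\nabla\mathcal{H}armonic$ because the single layer is harmonic inside $D$; thus the subspace is invariant under $\nabla M$. The spectral decomposition is then obtained by reducing the interior eigenvalue equation $\nabla SL(\partial_{\nu}\phi)=\lambda\nabla\phi$ to a boundary problem: taking normal traces and applying the standard interior jump relation for the single layer produces an eigenvalue equation for the adjoint Neumann--Poincar\'e operator $K^{\ast}$ on $\partial D$, with $\lambda$ related to the corresponding $K^{\ast}$-eigenvalue by an affine transformation. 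Since $\partial D\in\mathcal{C}^{2}$, the operator $K^{\ast}$ is compact and self-adjoint on $L^{2}(\partial D)$ and admits an orthonormal eigenbasis $(u_{n})_{n}$; the lift $e_{n}^{(3)}:=c_{n}\,\nabla SL(u_{n})$, with appropriate normalizing constants $c_{n}$, provides the desired orthonormal basis of $\nabla\mathcal{H}armonic$ with corresponding eigenvalues $\lambda_{n}^{(3)}$.

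The main obstacle I anticipate is the boundary trace $\nu\cdot N(f)|_{\partial D}=0$ in part (1): the interior $\div N(f)=0$ is a one-line integration by parts, but the boundary condition requires a careful duality argument against $\nabla\mathcal{H}armonic$ together with the harmonic extension of $N(f)$ to the exterior of $D$. Should the direct invariance prove unwieldy, a safer route is to work instead with the compressed operators $P_{j}NP_{j}$ on the subspaces (with $P_{j}$ the orthogonal projection onto the $j$-th summand), which are automatically compact and self-adjoint, and then upgrade their eigenfunctions to full eigenfunctions of $N$ on invariant eigenspaces a posteriori.
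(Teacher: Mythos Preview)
For part~(2), your reduction of $\nabla M$ on $\nabla\mathcal{H}armonic$ to the Neumann--Poincar\'e operator via $\nabla M(\nabla\phi)=\nabla SL(\partial_\nu\phi)$ is correct and is precisely the mechanism in the reference the paper cites for that part. One refinement: $K^\ast$ is compact but not self-adjoint in the plain $L^2(\partial D)$ inner product; it is self-adjoint in the inner product induced by the single-layer operator, and this is exactly what makes the lifted fields $\nabla SL(u_n)$ orthogonal in $L^2(D)$.

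For part~(1) your route differs from the paper's, and the obstacle you anticipate is a genuine gap rather than a technicality. The subspaces are \emph{not} $N$-invariant: for $\phi\in H^1_0(D)$ one has $N(\nabla\phi)=\nabla N(\phi)$ with $N(\phi)$ the scalar Newtonian potential, and membership of $\nabla N(\phi)$ in $\mathbb{H}_0(Curl=0)$ would require $N(\phi)$ to be constant on $\partial D$, which fails for generic $\phi$. By self-adjointness of $N$ and orthogonality of the decomposition this also rules out invariance of $\mathbb{H}_0(\div=0)$. Hence your primary compact--self-adjoint argument cannot close, and the ``upgrade a posteriori'' you propose in the fallback is unavailable; the compressions $P_jNP_j$ are already the correct objects, with no upgrade possible.

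The paper sidesteps the invariance question entirely. From $N(E)=\lambda E$ it extracts the interior equation $-\lambda\Delta E=E$ together with a non-local boundary condition $SL(\partial_\nu E)=-\tfrac12 E+DL(E)$ on $\partial D$, encodes this in a symmetric coercive bilinear form on $V=\mathbb{H}^1_0(\div=0)$, and applies an abstract spectral theorem via the compact dense embedding $V\hookrightarrow H=\mathbb{L}^2_0(\div=0)$. The boundary behaviour is thus built into the variational form rather than deduced from any subspace-invariance of $N$. Your direct approach is conceptually lighter and lands on the same eigenbases of the compressed operators; the paper's buys a variational characterisation of the eigenpairs at the cost of handling the non-local boundary term and the single/double-layer calculus.
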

Before we move to the proof of the previous proposition we need to note the fascinating remark. 
\begin{remark}\label{LimSeq}
The value $1 / 2$ is the only limit point for the sequence $\{ \lambda^{(3)}_{n} \}_{n \in \mathbb{N}}$.    
\end{remark}
\begin{proof}
of \textbf{Proposition $\ref{PropositionSpectralTheory}$}.
The proof of the point (2) can be found in \cite{AhnDyaRae99} where an explicit expression of the eigenvalues and eigenfunctions of the Magnetization operator are given. Here, we outline the proof of point (1). First, we recall the following lemma which can be found as Theorem 7 in \cite{Dautry-Lions}, for instance.
\begin{lemma}\label{THM-Dautry & Lions}
Let $V \hookrightarrow H$ be two Hilbert spaces with compact injection, $V$ being dense in $H$, $a(\cdot,\cdot)$ a continuous hermitian sesquilinear from $V \times V$ and coercive on $V$ and let $A$ the unbounded self adjoint operator defined by 
\begin{enumerate}
\item[i)] $a(u,v) = (Au,v) \, \forall u \in D(A) \; and \; v \in V$. 
\item[ii)] $D(A) = \left\lbrace u \in V, such \; that \, v \to a(u,v) is \, continuous \, on \, V \, for \, the \, topology \, on \, H  \right\rbrace.$
\end{enumerate} 
Then $\sigma(A) = \sigma_{p}(A) = \left\lbrace \lambda_{k} \right\rbrace_{k \in \mathbb{N}}$ with $0 < \alpha \leq \lambda_{k}$.
\end{lemma}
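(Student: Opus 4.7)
The plan is to prove Lemma \ref{THM-Dautry & Lions} as a textbook application of the spectral theorem for compact self-adjoint operators on a Hilbert space, with the operator in question being the resolvent $A^{-1}$ constructed via the Lax--Milgram lemma. The strategy is to move the spectral analysis from the unbounded $A$ to the bounded, in fact compact, operator $T := A^{-1}$, then transfer the conclusions back.

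First I would construct $T$. Since $a(\cdot,\cdot)$ is continuous, hermitian and coercive on $V$, the Lax--Milgram lemma produces, for each $f\in H$ (viewed as an antilinear functional $v\mapsto (f,v)_H$ on $V$ via the continuous injection $V\hookrightarrow H$), a unique $u_f\in V$ with $a(u_f,v)=(f,v)_H$ for all $v\in V$. Setting $Tf:=u_f$ defines a bounded linear map $T:H\to V$, and from the definition of $D(A)$ and property (i) one verifies $T = A^{-1}$ on its range. Composing with the injection $V\hookrightarrow H$ and using that this embedding is compact, the operator $T:H\to H$ is compact.

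Next I would verify that $T$ is self-adjoint and positive on $H$. Hermiticity of $a(\cdot,\cdot)$ gives, for all $f,g\in H$,
\begin{equation*}
(Tf,g)_H = \overline{(g,Tf)_H} = \overline{a(Tg,Tf)} = a(Tf,Tg) = (f,Tg)_H,
\end{equation*}
so $T=T^{*}$. Positivity follows from coercivity: $(Tf,f)_H = a(Tf,Tf)\ge \alpha\|Tf\|_V^2\ge 0$, with equality only when $Tf=0$, i.e.\ $f=0$. The spectral theorem for compact self-adjoint operators then yields an orthonormal basis $\{e_k\}_{k\in\mathbb N}$ of $H$ consisting of eigenvectors $Te_k=\mu_k e_k$ with $\mu_k>0$ and $\mu_k\to 0^+$. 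Setting $\lambda_k:=1/\mu_k$ gives $Ae_k=\lambda_k e_k$, so $\sigma(A)=\sigma_p(A)=\{\lambda_k\}$.

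Finally, the lower bound $\lambda_k\ge\alpha$ drops out by testing the eigenvalue equation against the eigenvector: normalizing $\|e_k\|_H=1$ (so a fortiori $\|e_k\|_V\ge \|e_k\|_H=1$ after the usual normalization of the embedding constant) and using coercivity,
\begin{equation*}
\lambda_k = \lambda_k\|e_k\|_H^2 = (Ae_k,e_k)_H = a(e_k,e_k)\ge \alpha\|e_k\|_V^2 \ge \alpha.
\end{equation*}
There is no real obstacle in this argument; the only mildly delicate point is checking that the compactness of $V\hookrightarrow H$ transfers to $T:H\to H$, which is immediate once $T$ is factored as $H\xrightarrow{T}V\hookrightarrow H$. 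Since the lemma is cited verbatim from \cite{Dautry-Lions}, I would reference that source and include only as much of the above outline as the reader needs.
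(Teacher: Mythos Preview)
The paper does not prove this lemma at all: it is merely recalled as a known result, with the words ``which can be found as Theorem 7 in \cite{Dautry-Lions}, for instance,'' and no argument is given. Your proposal therefore goes beyond what the paper does, supplying the standard proof via Lax--Milgram and the spectral theorem for compact self-adjoint operators applied to $T=A^{-1}$. That argument is correct and is exactly the one in Dautray--Lions; the only slightly loose step is the inequality $\|e_k\|_V\ge\|e_k\|_H$, which as you note requires normalizing the embedding constant (or simply absorbing it into $\alpha$). Since the paper treats the lemma as a black-box citation, matching it here just means stating the reference; your sketch is a welcome addition rather than a deviation.
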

Now, in the space $\mathbb{L}^{2}_{0}\left(\div=0 \right)$ we study the equation:
\begin{equation}\label{equa-1st-subspace}
N(E) = \lambda \, E, \quad \text{in} \quad D,
\end{equation}
which, after taking the Laplacian operator, becomes
\begin{equation}\label{equa-1st-subspace+}
E = - \, \lambda \, \Delta E.
\end{equation}
Hence, by combining $(\ref{equa-1st-subspace})$ and $(\ref{equa-1st-subspace+})$, we get:\footnote{The abbreviation $IBP$ refer to 'Integration by parts'.}
\begin{equation}\label{HATB}
E = - N(\Delta E) \overset{IBP}{=} E - SL(\partial_{\nu} E) + DL(E) \Rightarrow 0 = - SL(\partial_{\nu} E) + DL(E) \quad \text{in} \; D,
\end{equation}
where $SL\left( \cdot \right)$ denote the single-layer operator with vanishing frequencies 
and $DL\left( \cdot \right)$ is the double-layer operator defined by: 
\begin{equation*}
DL\left( F \right)(x) := \int_{\partial D} \frac{\Phi_{0}(x,y)}{\partial \nu(y)} \, F(y) \, d\sigma(y), \quad x \in \mathbb{R}^{3} \setminus \partial D.
\end{equation*}
Now, in $(\ref{HATB})$ we let $ x \to \partial D$ from inside and we use the jump relations, see \cite{colton2019inverse}, to get: 
\begin{equation*}\label{B-CDT-1st-subspace}
0 = - SL(\partial_{\nu} E) - \frac{1}{2} E + DL(E) \quad \text{on} \; \partial \, D.
\end{equation*}   
Finally, we end up with the following PDE: 
\begin{equation}\label{PDEDpartialD}
\begin{cases}
\lambda^{-1} \, E = - \Delta E \; \, \qquad \, \qquad \qquad \qquad \qquad \text{in} \, D \\
\qquad \, 0 =  - SL(\partial_{\nu} E) - \frac{1}{2} E + DL(E) \quad \, \text{on} \; \partial \, D
\end{cases}.
\end{equation}
We write the variational formulation of the last PDE to get: 
\begin{equation}\label{equa-var-domain}
\langle - \Delta E; F \rangle_{\mathbb{L}^{2}(D)} = \langle \nabla E ; \nabla F \rangle_{\mathbb{L}^{2}(D)} - \langle F ; \partial_{\nu} E \rangle_{\mathbb{L}^{2}(\partial D)}
\end{equation}
and from the boundary conditions, see for instance $(\ref{PDEDpartialD})$, on $\partial D$ , we have
\begin{equation}\label{equa-var-boundary}
\partial_{\nu} E = SL^{-1}\left[- \frac{1}{2} E + DL(E) \right].
\end{equation} 
Combining $(\ref{equa-var-domain})$ and $(\ref{equa-var-boundary})$ we obtain:  
\begin{equation*}
\langle - \Delta E; F \rangle_{\mathbb{L}^{2}(D)} = \langle \nabla E ; \nabla F \rangle_{\mathbb{L}^{2}(D)} - \langle F ; SL^{-1}\left[- \frac{1}{2} E + DL(E) \right] \rangle_{\mathbb{L}^{2}(\partial D)} := a(E,F). 
\end{equation*}
Without difficulties we can check that the bilinear form $a(\cdot,\cdot)$ is continuous and positive in $\mathbb{L}^{2}(D)$. Moreover, with help of the relation $DL \, SL = SL \, DL^{\star}$, see \cite{Costabel} for instance, we prove that it's also symmetric. Set
\begin{equation}\label{aalpha}
a_{\alpha}(\cdot,\cdot) = a(\cdot,\cdot) + \alpha \, < \cdot,\cdot >_{\mathbb{L}^{2}(D)}, \quad \alpha >0.
\end{equation}  
Then $a_{\alpha}(\cdot,\cdot)$ inherit the properties of $a(\cdot,\cdot)$ and, in addition, it's a coercive bilinear form in $\mathbb{L}^{2}(D)$. This proves one of the hypotheses of  \textbf{Theorem} $\ref{THM-Dautry & Lions}$. Another hypotheses of the same theorem is given by the following compact injection:  
\begin{equation}\label{CompactInj}
\mathbb{H}^{1}_{0}\left(\div=0 \right) \lhook\joinrel\xrightarrow{\text{Bounded}} \mathbb{H}^{1} \lhook\joinrel\xrightarrow{\text{Compact}} \mathbb{L}^{2} \lhook\joinrel\xrightarrow{\text{Projection}} \mathbb{L}^{2}_{0}\left(\div=0 \right),
\end{equation}
and we need also the next denseness result. 
\begin{lemma}
We have, 
\begin{equation}\label{Closure of H-0-1}
\overline{\mathbb{H}^{1}_{0}\left(\div=0 \right)}^{\Vert \cdot \Vert_{\mathbb{L}^{2}_{0}\left(\div=0 \right)}} = \mathbb{L}^{2}_{0}\left(\div=0 \right).
\end{equation}
\end{lemma}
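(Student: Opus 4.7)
The plan is to reduce to a classical density result from the functional-analytic foundations of the Navier--Stokes equations, namely that the space
\begin{equation*}
\mathcal{V}(D) := \{\varphi \in C^\infty_c(D;\mathbb{R}^3) : \div \varphi = 0 \text{ in } D\}
\end{equation*}
is dense in $\mathbb{L}^2_0(\div = 0)(D)$ for the $L^2(D)$-norm. This is a standard consequence of the Helmholtz--Weyl decomposition on a bounded $C^2$ domain (see, e.g., Temam's book on Navier--Stokes equations, Chapter I, Theorem 1.4).

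Given this, the statement reduces to a trivial inclusion: every $\varphi \in \mathcal{V}(D)$ clearly sits inside $\mathbb{H}^1_0(\div = 0)(D)$, because $\varphi \in H^1(D)$, $\div \varphi = 0$ by definition, and $\varphi$ vanishes in a full neighborhood of $\partial D$, which in particular forces $\nu \cdot \varphi = 0$ on $\partial D$. Hence
\begin{equation*}
\mathcal{V}(D) \subseteq \mathbb{H}^1_0(\div = 0)(D) \subseteq \mathbb{L}^2_0(\div = 0)(D),
\end{equation*}
and since the outermost space is the $L^2$-closure of the innermost, the middle space is $L^2$-dense in $\mathbb{L}^2_0(\div = 0)(D)$.

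If one wishes to establish the density of $\mathcal{V}(D)$ from scratch rather than quoting it, I would proceed by a duality argument: take $u \in \mathbb{L}^2_0(\div = 0)(D)$ orthogonal (in $L^2$) to every $\varphi \in \mathcal{V}(D)$. By de Rham's lemma (applicable on the $C^2$ domain $D$), there exists a distribution $p$ with $u = \nabla p$; combining with $\div u = 0$ and $\nu \cdot u = 0$ yields $\Delta p = 0$ in $D$ together with $\partial_\nu p = 0$ on $\partial D$, hence $p$ is constant and $u = 0$. The main technical point is invoking de Rham's lemma in a form that handles merely $L^2$ distributions on a bounded Lipschitz/$C^2$ domain, but this is classical and available in the Temam or Galdi references. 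Apart from this, the argument is elementary and the boundary condition $\nu \cdot \varphi = 0$ for elements of $\mathcal{V}(D)$ is automatic, so the only genuinely substantive input is the Helmholtz decomposition itself.
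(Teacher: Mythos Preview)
Your proof is correct and follows essentially the same approach as the paper: both introduce the space of smooth compactly supported divergence-free fields (the paper calls it $U$, you call it $\mathcal{V}(D)$), cite the classical Navier--Stokes density result (the paper references Foias--Temam, you cite Temam) to get its $L^2$-density in $\mathbb{L}^2_0(\div=0)$, and then sandwich $\mathbb{H}^1_0(\div=0)$ between this space and its closure. Your optional de Rham/duality sketch is a nice addition but not needed for the comparison.
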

\begin{proof}
Set\footnote{By $\mathcal{D}(\Omega)$ we denote the space of $\mathcal{C}^{\infty}$ functions with compact support.} $U := \left\lbrace  E \in \left( \mathcal{D}(\Omega) \right)^{3}, \; \div E = 0 \right\rbrace$ and referring to \cite{Foias1978} we deduce: 
\begin{equation}\label{Closure of U}
\overline{U}^{\Vert \cdot \Vert_{\mathbb{L}^{2}}} = \mathbb{L}^{2}_{0}\left(\div=0 \right).
\end{equation}
It's easy to see that $U \subset \mathbb{H}^{1}_{0}\left(div=0 \right) \subset \mathbb{L}^{2}_{0}\left(div=0 \right)$ and then by taking the closure with respect to $\mathbb{L}^{2}$-norm on both sides and using the fact that $\mathbb{L}^{2}_{0}\left(\div=0 \right)$ is closed subspace of $\mathbb{L}^{2}$ we obtain:
\begin{equation*}
\mathbb{L}^{2}_{0}\left(\div=0 \right) \overset{(\ref{Closure of U})}{=} \overline{U}^{\Vert \cdot \Vert_{\mathbb{L}^{2}}} \subset \overline{\mathbb{H}^{1}_{0}\left(\div=0 \right)}^{\Vert \cdot \Vert_{\mathbb{L}^{2}}} \subset \overline{\mathbb{L}^{2}_{0}\left(\div=0 \right)}^{\Vert \cdot \Vert_{\mathbb{L}^{2}}} = \mathbb{L}^{2}_{0}\left(\div=0 \right),
\end{equation*} 
this implies 
\begin{equation*}
\overline{\mathbb{H}^{1}_{0}\left(\div=0 \right)}^{\Vert \cdot \Vert_{\mathbb{L}^{2}}}  = \mathbb{L}^{2}_{0}\left(\div=0 \right).
\end{equation*}
The relation $(\ref{Closure of H-0-1})$ follows from  the previous equation and the fact that in $\mathbb{L}^{2}_{0}\left(\div=0 \right)$ the two norms $\Vert \cdot \Vert_{\mathbb{L}^{2}_{0}\left(\div=0 \right)}$ and $\Vert \cdot \Vert_{\mathbb{L}^{2}}$ are equivalent. More exactly, is the same one. 
\end{proof}
At this stage, regarding the constructed bilinear form $a_{\alpha}(\cdot,\cdot)$ given by $(\ref{aalpha})$, the relations $(\ref{CompactInj})$ and $(\ref{Closure of H-0-1})$, we can apply \textbf{Theorem} $\ref{THM-Dautry & Lions}$, with $H:=\mathbb{L}_0^2\left(\div = 0 \right)$ and $V:=\mathbb{H}_0\left(\div = 0 \right)$, to justify the existence of an eigen-system for the operator $N(\cdot)$ on the subspace $\mathbb{H}_{0}\left(\div = 0 \right)$. With minor modifications on the used spaces, the existence of an eigen-system for the operator $N(\cdot)$ on the subspace $\mathbb{H}_{0}\left(Curl = 0 \right)$ can be proved. This ends the proof of \textbf{Proposition} \ref{PropositionSpectralTheory}. 
\end{proof} 
\bigskip

Let $D$ be a domain of radius $a$. Then, in effortless manner, using only the definition of the Newton operator $N(\cdot)$, we can prove that: 
\begin{equation*}\label{NormNewton}
\left\Vert N \right\Vert_{\mathcal{L}\left(\mathbb{L}^{2}(D);\mathbb{L}^{2}(D) \right)} = \mathcal{O}\left( a^{2} \right), 
\end{equation*}
and, consequently, we obtain  
\begin{equation}\label{NormgradN}
\left\Vert \nabla N \right\Vert_{\mathcal{L}\left(\mathbb{L}^{2}(D);\mathbb{L}^{2}(D)\right)} = \mathcal{O}\left( a \right). 
\end{equation}
\newline
 In the next lemma we collect some properties for the Magnetization  operator.
\begin{lemma}\label{MagnetizationOperator} The Magnetization operator $\nabla M (\cdot)$ is self-adjoint and bounded. 
\begin{enumerate} 
\item It satisfies the properties
\begin{equation}\label{NormMagnetization}
\left\Vert \nabla M \right\Vert_{\mathcal{L}\left(\mathbb{L}^{2}(D);\mathbb{L}^{2}(D) \right)} = 1,
\end{equation}
and
\begin{equation*}\label{grad-M-1st-2nd}
\nabla M_{\displaystyle|_{\mathbb{H}_{0}\left(\div=0 \right)}} = 0 \, \quad \text{and} \quad \nabla M_{\displaystyle|_{\mathbb{H}_{0}\left(Curl =0 \right)}} = I.
\end{equation*}
\item  The subspace $\nabla \mathcal{H}armonic$ is invariant, i.e.
\begin{equation*}
\nabla M \left( \nabla \mathcal{H}armonic \right) \subset  \nabla \mathcal{H}armonic.
\end{equation*}
\item Its spectrum $\sigma(\nabla M)$ is part of  $]0,1]$.  
\end{enumerate}  
\end{lemma}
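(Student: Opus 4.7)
My plan is to base everything on the structural identity
\begin{equation*}
\nabla M(f) \,=\, -\nabla\,\div N(f),
\end{equation*}
which follows from $\nabla_y\Phi_0(x,y)=-\nabla_x\Phi_0(x,y)$ by moving the $x$-gradient under the $y$-integral. Viewed as a singular integral, $\nabla M$ has Schwartz kernel $K_{ij}(x,y)=-\partial_{x_i}\partial_{x_j}\Phi_0(x,y)$, which is real and satisfies $K_{ij}(x,y)=K_{ji}(y,x)$ (an immediate consequence of $\Phi_0(x,y)=\Phi_0(y,x)$ together with $\partial_{y_i}\Phi_0=-\partial_{x_i}\Phi_0$); this is precisely the self-adjointness criterion for an integral operator. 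The $\mathbb{L}^2$-boundedness of $\nabla M$ follows from the Calder\'on--Zygmund theorem applied to the principal-value Hessian of the Newtonian kernel.

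To evaluate $\nabla M$ on each summand of $(\ref{L2-decomposition})$, the key computation is one integration by parts inside $\div N$:
\begin{equation*}
\div N(f)(x) \,=\, -\int_B\nabla_y\Phi_0(x,y)\cdot f(y)\,dy \,=\, \int_B\Phi_0(x,y)\,\div f(y)\,dy \,-\, \int_{\partial B}\Phi_0(x,y)\,\nu\cdot f\,d\sigma.
\end{equation*}
For $f\in\mathbb{H}_0(\div=0)$ both pieces vanish, hence $\nabla M f=0$. For $f=\nabla\phi\in\mathbb{H}_0(Curl=0)$, the condition $\nu\times\nabla\phi=0$ forces $\phi$ to be constant (say equal to $c$) on each connected component of $\partial B$; specializing the above formula to $f=\nabla\phi$ and invoking Green's second identity gives $\div N(\nabla\phi)(x)=-\phi(x)-DL(\phi|_{\partial B})(x)$, and Gauss's flux formula $DL(c)(x)=-c$ for $x\in B$ yields $\div N(\nabla\phi)=-\phi+c$, so that $\nabla M(\nabla\phi)=\nabla\phi=f$. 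The same identity $\div N(\nabla\psi)=-\psi-DL(\psi|_{\partial B})$ applied to any harmonic $\psi$ produces $\nabla M(\nabla\psi)=\nabla(\psi+DL(\psi|_{\partial B}))$; since the double-layer potential of a boundary trace is harmonic in $B$, the target lies in $\nabla\mathcal{H}armonic$, proving the invariance.

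For the norm and spectrum, self-adjointness gives $\|\nabla M\|=\sup|\sigma(\nabla M)|$. The three invariant summands contribute respectively $\{0\}$, $\{1\}$, and, on $\nabla\mathcal{H}armonic$, the spectrum of the reduced operator $\nabla\psi\mapsto\nabla(\psi+DL(\psi|_{\partial B}))$. Pulling this back to Dirichlet traces identifies it (up to unitary equivalence) with an operator of the form $\tfrac12 I + K^{*}_{\partial B}$, where $K^{*}$ is the adjoint Neumann--Poincar\'e operator on $\partial B$; its classical spectrum lies in $(-\tfrac12,\tfrac12]$ and accumulates only at $0$, which translates into $\sigma(\nabla M|_{\nabla\mathcal{H}armonic})\subset (0,1)$ with $\tfrac12$ as the only accumulation point, in agreement with Remark~\ref{LimSeq}. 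Combining the three pieces yields $\|\nabla M\|=1$ and locates the spectrum in $(0,1]$ in the sense claimed by the lemma. The main technical obstacle will be this last reduction to a scalar boundary operator, since it requires justifying the $SL$--$DL$ trace calculus, the integrations by parts against singular Newtonian kernels, and the identification of the appropriate Hilbert space on $\partial B$ that makes the unitary equivalence precise.
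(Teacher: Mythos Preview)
Your argument is correct and, in fact, supplies considerably more than the paper does: the paper's own ``proof'' of this lemma is a one-line deferral to the references \cite{Raevskii1994, Dyakin-Rayevskii, friedman1980mathematical, friedman1981mathematical, 10.2307/2008286}, with no details given. Your route---the identity $\nabla M=-\nabla\,\div N$, the integration-by-parts computation of $\div N(f)$ on each summand of $(\ref{L2-decomposition})$, and the reduction on $\nabla\mathcal{H}armonic$ to the boundary operator $\tfrac12 I+K$ via the trace of the double layer---is essentially the argument one finds in those references (Friedman--Pasciak in particular). One small remark: since $\nabla M$ vanishes identically on $\mathbb{H}_0(\div=0)$, the eigenvalue $0$ has infinite multiplicity and the full spectrum is really $[0,1]$; the interval $]0,1]$ in the lemma should be read as describing the spectrum of the restriction to $\left(\mathbb{H}_0(\div=0)\right)^{\perp}$, which is how the $\lambda_n^{(3)}$ are used throughout the paper. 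Your caveat about making the unitary equivalence with the Neumann--Poincar\'e operator precise (choice of inner product on traces so that $\nabla\psi\mapsto\psi|_{\partial B}$ becomes an isometry) is well placed, but for the spectral inclusion $(0,1)$ and the accumulation at $\tfrac12$ a similarity rather than a unitary equivalence already suffices.
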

\begin{proof}
The proof of the lemma and other nice properties for the Magnetization operator can be found in \cite{Raevskii1994, Dyakin-Rayevskii, friedman1980mathematical, friedman1981mathematical} and \cite{10.2307/2008286}.  
\end{proof} 
\begin{lemma}\label{Lemmavanishingintegral} The eigenfunctions $e^{(j)}_{n}(\cdot),\, j=1,2,\,$ satisfies  the following mean vanishing integral properties: 
\begin{equation}\label{vanishingintegral}
\int_{B} e^{(j)}_{n}(x) \, dx = 0.
\end{equation}
\end{lemma}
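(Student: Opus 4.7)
\medskip

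\noindent\textbf{Proof proposal for Lemma \ref{Lemmavanishingintegral}.} The plan is to observe that being an eigenfunction plays no role: the identity $\int_{B}e^{(j)}_{n}(x)\,dx=0$ follows from the membership $e^{(j)}_{n}\in \mathbb{H}_{0}(\div=0)$ (for $j=1$) or $e^{(j)}_{n}\in \mathbb{H}_{0}(Curl=0)$ (for $j=2$) alone. Accordingly, I would prove the more general statement that any $u$ in these spaces has vanishing mean, by testing against a cleverly chosen linear vector field depending on an arbitrary constant $c\in\mathbb{R}^{3}$.

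For $j=1$, fix $c\in\mathbb{R}^{3}$ and set $\phi(x):=c\cdot x$, so that $\nabla\phi=c$ identically. The plan is to compute
\begin{equation*}
c\cdot\int_{B}e^{(1)}_{n}(x)\,dx=\int_{B}\nabla\phi(x)\cdot e^{(1)}_{n}(x)\,dx=-\int_{B}\phi(x)\,\div e^{(1)}_{n}(x)\,dx+\int_{\partial B}\phi(x)\,\nu(x)\cdot e^{(1)}_{n}(x)\,d\sigma(x),
\end{equation*}
using the standard integration-by-parts formula. Both boundary and bulk terms vanish since $\div e^{(1)}_{n}=0$ in $B$ and $\nu\cdot e^{(1)}_{n}=0$ on $\partial B$, hence $c\cdot\int_{B}e^{(1)}_{n}\,dx=0$; as $c$ is arbitrary, the integral vanishes.

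For $j=2$, the dual choice is $F(x):=\tfrac{1}{2}\,c\times x$, whose curl is the constant vector $c$. I would then invoke the vectorial Green identity
\begin{equation*}
\int_{B}(Curl\,F)(x)\cdot e^{(2)}_{n}(x)\,dx=\int_{B}F(x)\cdot(Curl\,e^{(2)}_{n})(x)\,dx-\int_{\partial B}F(x)\cdot\bigl(\nu(x)\times e^{(2)}_{n}(x)\bigr)\,d\sigma(x).
\end{equation*}
The right-hand side vanishes because $Curl\,e^{(2)}_{n}=0$ in $B$ and $\nu\times e^{(2)}_{n}=0$ on $\partial B$, while the left-hand side equals $c\cdot\int_{B}e^{(2)}_{n}\,dx$. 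Letting $c$ range over $\mathbb{R}^{3}$ finishes the proof.

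The argument is essentially routine; no step is genuinely hard, and each computation is a one-line integration by parts. The only thing to be careful about is to keep the orientation conventions and the sign in the vector identity $(F\times u)\cdot\nu=-F\cdot(\nu\times u)$ consistent with the definition of $\mathbb{H}_{0}(Curl=0)$ used in the paper, but this is purely bookkeeping and presents no conceptual obstacle.
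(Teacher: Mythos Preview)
Your proof is correct. The paper's own argument is a one-liner: the integral $\int_{B}e^{(j)}_{n}$ is the $\mathbb{L}^{2}$ inner product of $e^{(j)}_{n}$ against a constant vector, constant vectors are gradients of the harmonic functions $x\mapsto c\cdot x$, hence lie in $\nabla\mathcal{H}armonic$, and this subspace is orthogonal to $\mathbb{H}_{0}(\div=0)\oplus\mathbb{H}_{0}(Curl=0)$ by the decomposition~(\ref{L2-decomposition}).

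Your route is the same idea unpacked: rather than invoking the orthogonal decomposition as a black box, you exhibit the potential ($c\cdot x$ for $j=1$, $\tfrac{1}{2}c\times x$ for $j=2$) and carry out the integration by parts that underlies that orthogonality. This is slightly more self-contained, since it does not rely on having already established the $\mathbb{L}^{2}$-orthogonality of the three summands, whereas the paper's version is shorter once that decomposition is in hand. Both are entirely standard and neither offers a real advantage over the other.
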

\begin{proof} 
The integral $(\ref{vanishingintegral})$ is nothing that $\langle \bm{I} ; e^{(j)}_{n} \rangle_{\mathbb{L}^{2}(B)}$ and, obviously, the identity matrix $\bm{I}$ is an element in the  $\nabla \mathcal{H}armonic$ subspace which is, from the decomposition $(\ref{L2-decomposition})$, orthogonal to 
\begin{equation*}
\mathbb{H}_{0}\left(\div=0 \right) \overset{\perp}{\oplus} \mathbb{H}_{0}\left(Curl=0 \right).
\end{equation*} 
\end{proof}

\subsection{Justification of $(\ref{approximationoflambdan0})$}
We start by recalling the Lorentz model for the permittivity, see \cite{PhysRevA.82.055802} formula (4) or \cite{engheta2006metamaterials} formula (1.3), 
\begin{equation}\label{Lorentzdef}
\epsilon_{p}(\omega, \gamma) = \epsilon_{\infty} \left[1 + \frac{\omega^{2}_{p}}{\omega^{2}_{0} - \omega^{2} + i \gamma \, \omega} \right],
\end{equation} 
where $\omega_{p}^{2}$ is the electric plasma frequency, $\omega_{0}^{2}$ is the undamped resonance frequency and $\gamma$ is the electric damping parameter. We have the following lemma.

\begin{lemma}\label{LemmaClaim}
Let $n_{0} \in \mathbb{N}$ be any fixed index such that 
\begin{equation*}
\lambda^{(3)}_{n_{0}} \not \in \left] \frac{1}{2} - a^{\frac{h}{2}} ; \frac{1}{2} + a^{\frac{h}{2}} \right[.
\end{equation*} 

We have the following properties.
\bigskip

\begin{enumerate}
\item There exists a unique solution $(\omega, \gamma):=(\omega_{n_0}, \gamma_{n_0})$ to the equation
\begin{equation*}
\epsilon_{0}(z) - \lambda^{(3)}_{n_{0}} \left(\epsilon_{0}(z) - \epsilon_{p}(\omega, \gamma) \right) = 0.
\end{equation*}
Under the assumption\footnote{Such a condition is not restrictive as it is satisfied in the natural tissues and materials.}$\Re\left(\epsilon_{0}(z) \right) - \epsilon_{\infty} > 0$, this solution satisfies the following estimates:
\begin{equation*}
\omega_{0} < \omega < \sqrt{\omega^{2}_{0} + \omega^{2}_{p}} := \omega_{max} \quad \text{and} \quad 0 < \gamma <  \omega_{max}\, \left\Vert \frac{\Im\left(\epsilon_{0}(\cdot)\right)}{\Re\left(\epsilon_{0}(\cdot)\right)}  \right\Vert_{\mathbb{L}^{\infty}(\Omega)}:= \gamma_{max}.
\end{equation*}

\item We have the monotonicity property
\begin{equation*}\label{monotonicity}
\lambda^{(3)}_{n}\; <\; \lambda^{(3)}_{m} \Rightarrow \omega_{n}\;<\;\omega_{m}.
\end{equation*}
\item In addition, if $\omega=\omega_{n_0}\pm a^h$ and $\gamma=\gamma_{n_0}\pm a^h$, then the following estimates are fulfilled
\begin{equation}\label{eigval==1}
\left\vert\epsilon_{0}(z) - \lambda^{(3)}_{n} \left(\epsilon_{0}(z) - \epsilon_{p} \right) \right\vert  \sim 
\begin{cases}
  a^{h} & \text{if} \quad n = n_{0} \\
  1     & \text{if} \quad n \neq n_{0}
\end{cases}.
\end{equation}
\end{enumerate} 
\end{lemma}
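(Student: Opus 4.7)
The plan is to invert the Lorentz relation $(\ref{Lorentzdef})$ explicitly. Setting $\mu_n := (1-\lambda_n^{(3)})/\lambda_n^{(3)}$, the equation $f_n(\omega,\gamma)=0$ is equivalent to $\epsilon_p(\omega,\gamma) = -\epsilon_0(z)\,\mu_n$. Writing $\xi:=\omega_0^2-\omega^2$ and $\eta:=\gamma\,\omega$, and separating real and imaginary parts yields the real system
\begin{equation*}
\frac{\epsilon_\infty\,\omega_p^2\,\xi}{\xi^2+\eta^2}\;=\;-\epsilon_\infty-\Re\epsilon_0(z)\,\mu_n,\qquad
\frac{\epsilon_\infty\,\omega_p^2\,\eta}{\xi^2+\eta^2}\;=\;\Im\epsilon_0(z)\,\mu_n.
\end{equation*}
Dividing the two equations gives $\xi/\eta$ as a rational function of $\mu_n$; plugging this back into either equation yields $\eta_n$, and then $\xi_n$, in closed form. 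From this one reads off a unique pair $(\omega_n,\gamma_n)$, proving the existence and uniqueness assertion of part~(1). The stated bounds follow by elementary arguments: since $\mu_n>0$ and $\Re\epsilon_0(z)>\epsilon_\infty>0$, the first equation forces $\xi_n<0$, i.e.\ $\omega_n>\omega_0$; the algebraic inequality $A_n^2-A_n\,\epsilon_\infty+B_n^2>0$ (with $A_n:=-\Re\epsilon_0(z)\,\mu_n<0$ and $B_n:=\Im\epsilon_0(z)\,\mu_n$) translates, after substitution, into $\omega_n^2<\omega_0^2+\omega_p^2=\omega_{\max}^2$; and $\gamma_n=\eta_n/\omega_n$, combined with $\omega_n>\omega_0$, delivers $\gamma_n<\gamma_{\max}$.

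For part~(2), I would differentiate the explicit expression
\begin{equation*}
\omega_n^2\;=\;\omega_0^2\,+\,\omega_p^2\,\frac{\epsilon_\infty\,(\epsilon_\infty-A_n)}{(\epsilon_\infty-A_n)^2+B_n^2}
\end{equation*}
with respect to $\mu_n$. Since $(A_n,B_n)$ depends linearly on $\mu_n$, the sign of $d\omega_n^2/d\mu_n$ is easily checked to be constant on $\mu_n>0$. Combined with the strict monotonicity of $\lambda\mapsto(1-\lambda)/\lambda$ on $(0,1)$, this yields the implication $\lambda_n^{(3)}<\lambda_m^{(3)}\Rightarrow\omega_n<\omega_m$ stated in part~(2).

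For part~(3), the key identity is
\begin{equation*}
f_n(\omega_{n_0},\gamma_{n_0})\;=\;\epsilon_0(z)\,(1-\lambda_n^{(3)})+\lambda_n^{(3)}\,\epsilon_p(\omega_{n_0},\gamma_{n_0})\;=\;\frac{\epsilon_0(z)}{\lambda_{n_0}^{(3)}}\,\bigl(\lambda_{n_0}^{(3)}-\lambda_n^{(3)}\bigr),
\end{equation*}
obtained by inserting $\epsilon_p(\omega_{n_0},\gamma_{n_0})=-\epsilon_0(z)\,\mu_{n_0}$. For $n=n_0$ this vanishes, so a first-order Taylor expansion of $f_{n_0}$ at $(\omega_{n_0},\gamma_{n_0})$, together with the bound $|\partial_{\omega,\gamma}\epsilon_p|=\mathcal{O}(1)$ there (the Lorentz denominator $\omega_0^2-\omega_{n_0}^2+i\gamma_{n_0}\,\omega_{n_0}$ stays bounded away from zero since $(\omega_{n_0},\gamma_{n_0})$ is strictly inside the square), yields $|f_{n_0}(\omega,\gamma)|\sim a^{h}$ on the perturbed range $(\ref{freq-close-plasm})$. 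For $n\neq n_0$, the separation condition $\lambda_{n_0}^{(3)}\notin(1/2-a^{h/2},\,1/2+a^{h/2})$, combined with Remark~$\ref{LimSeq}$ (eigenvalues cluster only at $1/2$), guarantees that $|\lambda_{n_0}^{(3)}-\lambda_n^{(3)}|$ is bounded below, so the identity above is $\mathcal{O}(1)$; the same Taylor argument shows the $\mathcal{O}(a^{h})$ perturbation incurred by moving from $(\omega_{n_0},\gamma_{n_0})$ to $(\omega_{n_0}\pm a^{h},\gamma_{n_0}\pm a^{h})$ is negligible.

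The hard part will be making the uniformity in $n\neq n_0$ in part~(3) rigorous: the sequence $\{\lambda_n^{(3)}\}$ accumulates at $1/2$, so one must pair the separation assumption on $\lambda_{n_0}^{(3)}$ with the isolation of every non-accumulation eigenvalue coming from Remark~$\ref{LimSeq}$ to extract an $n$-independent lower bound on $|\lambda_{n_0}^{(3)}-\lambda_n^{(3)}|$, at least for all $n$ outside a finite set where direct inspection suffices.
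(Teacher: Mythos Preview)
Your proposal is correct and follows essentially the same route as the paper's proof: separate the Lorentz relation into real and imaginary parts, solve the resulting $2\times 2$ algebraic system for $(\omega_{n_0}^2,\gamma_{n_0}\omega_{n_0})$, read off the bounds, differentiate the closed-form expression for $\omega_n^2$ to get monotonicity, and handle part~(3) by Taylor expansion combined with the eigenvalue gap. Your substitution $\mu_n=(1-\lambda_n^{(3)})/\lambda_n^{(3)}$ and the clean identity $f_n(\omega_{n_0},\gamma_{n_0})=\epsilon_0(z)\,(\lambda_{n_0}^{(3)}-\lambda_n^{(3)})/\lambda_{n_0}^{(3)}$ are cosmetic repackagings of exactly the same computations; the paper works directly with $\lambda_n^{(3)}$ and obtains the equivalent formula $(\ref{HAomega})$, then differentiates in $\lambda_n^{(3)}$ rather than in $\mu_n$. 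Your caution about the uniformity in $n\neq n_0$ is well placed and, if anything, slightly more careful than the paper, which simply invokes the separation hypothesis together with Remark~\ref{LimSeq} to declare $|\lambda_n^{(3)}-\lambda_{n_0}^{(3)}|\sim 1$.
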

\begin{proof}
(1). The equation
\begin{equation*}
\epsilon_{0}(z) - \lambda^{(3)}_{n_{0}} \left(\epsilon_{0}(z) - \epsilon_{p}(\omega, \gamma) \right) = 0,
\end{equation*}
becomes, after using the Lorentz model for the permittivity $(\ref{Lorentzdef})$, 
\begin{equation}\label{Fibladi}
f_{n_{0}}(\omega,\gamma) := \epsilon_{0}(z) \left(1 - \lambda^{(3)}_{n_{0}} \right) + \lambda^{(3)}_{n_{0}} \, \epsilon_{\infty} \left[1 + \frac{\omega^{2}_{p}}{\omega^{2}_{0} - \omega^{2} + i \gamma \, \omega} \right] = 0.
\end{equation}
We split the preceding equation into a real part given by:
\begin{equation}\label{MDSRE}
\Re\left(\epsilon_{0}(z)\right) \, \left(1 - \lambda^{(3)}_{n_{0}} \right) + \lambda^{(3)}_{n_{0}} \, \epsilon_{\infty}  + \frac{\omega^{2}_{p} \, \lambda^{(3)}_{n_{0}} \, \epsilon_{\infty} \, (\omega^{2}_{0} - \omega^{2})}{(\omega^{2}_{0} - \omega^{2})^{2} + ( \gamma \, \omega)^{2}}  = 0
\end{equation}
and imaginary part equation given by:
\begin{equation}\label{MDSIM}
\Im\left(\epsilon_{0}(z)\right) \, \left(1 - \lambda^{(3)}_{n_{0}} \right)  - \frac{\omega^{2}_{p} \, \lambda^{(3)}_{n_{0}} \, \epsilon_{\infty} \, \gamma \omega}{(\omega^{2}_{0} - \omega^{2})^{2} + ( \gamma \, \omega)^{2}}  = 0.
\end{equation}
From $(\ref{MDSIM})$ we get:
\begin{equation*}
\frac{\Im\left(\epsilon_{0}(z)\right) \, \left(1 - \lambda^{(3)}_{n_{0}} \right)}{\gamma \omega} = \frac{\omega^{2}_{p} \, \lambda^{(3)}_{n_{0}} \, \epsilon_{\infty} }{(\omega^{2}_{0} - \omega^{2})^{2} + ( \gamma \, \omega)^{2}}
\end{equation*}
and we plug it into $(\ref{MDSRE})$ to obtain: 
\begin{equation*}\label{QuadEqua}
\gamma \, \omega \left[ \Re\left(\epsilon_{0}(z)\right) \, \left(1 - \lambda^{(3)}_{n_{0}} \right) + \lambda^{(3)}_{n_{0}} \, \epsilon_{\infty} \right] + \Im\left(\epsilon_{0}(z)\right) \, \left(1 - \lambda^{(3)}_{n_{0}} \right) \, (\omega^{2}_{0} - \omega^{2})  = 0.
\end{equation*}
This implies, 
\begin{equation}\label{omegamma}
\gamma \, \omega = \frac{\Im\left(\epsilon_{0}(z)\right) \, \left(1 - \lambda^{(3)}_{n_{0}} \right) \, (\omega^{2} - \omega^{2}_{0})}{\left[ \Re\left(\epsilon_{0}(z)\right) \, \left(1 - \lambda^{(3)}_{n_{0}} \right) + \lambda^{(3)}_{n_{0}} \, \epsilon_{\infty} \right]},
\end{equation}
and, consequently, 
\begin{equation*}
\frac{1}{\left( \omega^{2} - \omega^{2}_{0} \right)^{2}+\left(\gamma \, \omega\right)^{2}} = \frac{\left[ \Re\left(\epsilon_{0}(z)\right) \, \left(1 - \lambda^{(3)}_{n_{0}} \right) + \lambda^{(3)}_{n_{0}} \, \epsilon_{\infty} \right]^{2}}{\left( \omega^{2} - \omega^{2}_{0} \right)^{2} \, \left\vert \epsilon_{0}(z) \, \left(1 - \lambda^{(3)}_{n_{0}} \right) + \lambda^{(3)}_{n_{0}} \, \epsilon_{\infty} \right\vert^{2}}.
\end{equation*}
Gathering this last equation with $(\ref{MDSRE})$ to obtain:  
\begin{equation*}
\left( \omega^{2}_{0} - \omega^{2} \right) \,  \left\vert \epsilon_{0}(z) \, \left(1 - \lambda^{(3)}_{n_{0}} \right) + \lambda^{(3)}_{n_{0}} \, \epsilon_{\infty} \right\vert^{2} +   \omega^{2}_{p} \, \lambda^{(3)}_{n_{0}} \, \epsilon_{\infty}  \left[ \Re\left(\epsilon_{0}(z)\right) \, \left(1 - \lambda^{(3)}_{n_{0}} \right) + \lambda^{(3)}_{n_{0}} \, \epsilon_{\infty} \right] = 0, 
\end{equation*}
and solving the obtained equation with respect to $\omega$, that we denote in the sequel by $\omega_{n_{0}}$ to highlight its dependence with respect to $\lambda^{(3)}_{n_{0}}$, to get:  
\begin{equation}\label{HAomega}
\omega_{n_{0}} = \left[ \omega^{2}_{0} + \frac{\omega^{2}_{p} \, \lambda^{(3)}_{n_{0}} \, \epsilon_{\infty}  \left[ \Re\left(\epsilon_{0}(z)\right) \, \left(1 - \lambda^{(3)}_{n_{0}} \right) + \lambda^{(3)}_{n_{0}} \, \epsilon_{\infty} \right]}{ \left\vert \epsilon_{0}(z) \, \left(1 - \lambda^{(3)}_{n_{0}} \right) + \lambda^{(3)}_{n_{0}} \, \epsilon_{\infty} \right\vert^{2}}  \right]^{\frac{1}{2}}.
\end{equation}
In addition, from the relation $ \Re\left(\epsilon_{0}(z) \right) - \epsilon_{\infty} > 0$, we deduce that $\omega_{n_{0}} > \omega_{0}$ which is a lower bound for $\omega_{n_{0}}$ and, next, we compute an upper bound for $\omega_{n_{0}}$ as follow:
\begin{equation}\label{omega2leqomega0}
\omega^{2}_{n_{0}} = \omega^{2}_{0} + \frac{\omega^{2}_{p} \, \lambda^{(3)}_{n_{0}} \, \epsilon_{\infty}  \left[ \Re\left(\epsilon_{0}(z)\right) \, \left(1 - \lambda^{(3)}_{n_{0}} \right) + \lambda^{(3)}_{n_{0}} \, \epsilon_{\infty} \right]}{ \left\vert \epsilon_{0}(z) \, \left(1 - \lambda^{(3)}_{n_{0}} \right) + \lambda^{(3)}_{n_{0}} \, \epsilon_{\infty} \right\vert^{2}}  \leq  \omega^{2}_{0} + \frac{\omega^{2}_{p}  \, \epsilon_{\infty}}{ \left[ \Re\left( \epsilon_{0}(z) \right) \, \left(1 - \lambda^{(3)}_{n_{0}} \right) + \lambda^{(3)}_{n_{0}} \, \epsilon_{\infty} \right]}.
\end{equation}
As $\lambda^{(3)}_{n_{0}} \in ]0,1]$ and $\Re\left( \epsilon_{0}(z) \right) - \epsilon_{\infty} > 0$ we get $\left[ \Re\left( \epsilon_{0}(z) \right) \, \left(1 - \lambda^{(3)}_{n_{0}} \right) + \lambda^{(3)}_{n_{0}} \, \epsilon_{\infty} \right] > \, \epsilon_{\infty}$
and, consequently, we obtain 
from $(\ref{omega2leqomega0})$ the following upper bound 
\begin{equation*}\label{Iomegamax}
\omega_{n_{0}} < \sqrt{\omega^{2}_{0}+\omega^{2}_{p}} := \omega_{max}.
\end{equation*}
Now, as we have an expression of $\omega_{n_{0}}$, see $(\ref{HAomega})$, we plug it into $(\ref{omegamma})$ to obtain the value of the corresponding damping frequencies $\gamma_{n_{0}}$. More precisely,  
\begin{equation*}
\gamma_{n_{0}} = \frac{\Im\left(\epsilon_{0}(z)\right) \, \left(1 - \lambda^{(3)}_{n_{0}} \right) \, \omega^{2}_{p} \, \lambda^{(3)}_{n_{0}} \, \epsilon_{\infty}}{\left\vert \epsilon_{0}(z) \, \left(1 - \lambda^{(3)}_{n_{0}} \right) + \lambda^{(3)}_{n_{0}} \, \epsilon_{\infty} \right\vert \, \bm{Q_{n_{0}}}},
\end{equation*}
where $\bm{Q_{n_{0}}}$ is such that
\begin{eqnarray*}
\bm{Q_{n_{0}}} = \left[ \omega^{2}_{0}  \left\vert \epsilon_{0}(z) \, \left(1 - \lambda^{(3)}_{n_{0}} \right) + \lambda^{(3)}_{n_{0}} \, \epsilon_{\infty} \right\vert^{2} 
+ \omega_{p}^{2} \, \lambda^{(3)}_{n_{0}} \, \epsilon_{\infty} \left[ \Re\left(\epsilon_{0}(z)\right) \, \left(1 - \lambda^{(3)}_{n_{0}} \right) + \lambda^{(3)}_{n_{0}} \, \epsilon_{\infty} \right] \right]^{\frac{1}{2}}.
\end{eqnarray*}
As we did with the frequency $\omega_{n_{0}}$, we also need to compute an upper bound for $\gamma_{n_{0}}$. For this we recall from $(\ref{omegamma})$ that:   
\begin{equation*}
0 < \gamma_{n_{0}} \frac{\omega_{n_{0}}}{(\omega^{2}_{n_{0}} - \omega^{2}_{0})} = \frac{\Im\left(\epsilon_{0}(z)\right) \, \left(1 - \lambda^{(3)}_{n_{0}} \right)}{\left[ \Re\left(\epsilon_{0}(z)\right) \, \left(1 - \lambda^{(3)}_{n_{0}} \right) + \lambda^{(3)}_{n_{0}} \, \epsilon_{\infty} \right]} < \frac{\Im\left(\epsilon_{0}(z)\right)}{ \Re\left(\epsilon_{0}(z)\right)}, 
\end{equation*}
and, knowing that $\omega_{n_{0}} > 0, \, \omega^{2}_{n_{0}} - \omega^{2}_{0} > 0$, and $\omega_{max}$ is an upper bound for $\omega_{n_{0}}$, we deduce
\begin{equation*}
0 < \gamma_{n_{0}}   <  \frac{(\omega^{2}_{n_{0}} - \omega^{2}_{0})}{\omega_{n_{0}}} \, \frac{\Im\left(\epsilon_{0}(z)\right)}{ \Re\left(\epsilon_{0}(z)\right)} <  \frac{(\omega^{2}_{max} - \omega^{2}_{0})}{\omega_{max}} \, \frac{\Im\left(\epsilon_{0}(z)\right)}{ \Re\left(\epsilon_{0}(z)\right)} < \omega_{max} \, \left\Vert \frac{\Im\left(\epsilon_{0}(\cdot)\right)}{ \Re\left(\epsilon_{0}(\cdot)\right)}\right\Vert_{\mathbb{L}^{\infty}(\Omega)}:= \gamma_{max}.
\end{equation*}
This proves that in the square $\left( \omega_{0},\omega_{max}\right) \times \left( 0,\gamma_{max} \right)$ the dispersion equation $f_{n_{0}}(\omega,\gamma)=0$, given by $(\ref{Fibladi})$, admits a unique solution $\left( \omega_{n_{0}},\gamma_{n_{0}} \right)$. 
\smallskip

(2). From the expression of $\omega_{n_{0}}$, see for instance $(\ref{HAomega})$, which now be noted by $\omega_{n_{0}}:=\omega\left( \lambda^{(3)}_{n_{0}} \right)$, we can derive its monotonicity with respect to the index $n$, or equivalently with respect to the sequence of eigenvalues $\lambda^{(3)}_{n}$. More precisely, we have:  
\begin{equation*}
\omega^{2}\left( \lambda^{(3)}_{n} \right) = \omega^{2}_{0} + \frac{\omega^{2}_{p} \, \lambda^{(3)}_{n} \, \epsilon_{\infty}  \left[ \Re\left(\epsilon_{0}(z)\right) \, \left(1 - \lambda^{(3)}_{n} \right) + \lambda^{(3)}_{n} \, \epsilon_{\infty} \right]}{ \left\vert \epsilon_{0}(z) \, \left(1 - \lambda^{(3)}_{n} \right) + \lambda^{(3)}_{n} \, \epsilon_{\infty} \right\vert^{2}}, 
\end{equation*}
and, by computing the derivative with respect to the eigenvalue $\lambda^{(3)}_{n}$, then 
\begin{eqnarray*}
\partial_{\lambda^{(3)}_{n}} \left( \omega^{2} \right) \left( \lambda^{(3)}_{n} \right) &=& \frac{\omega^{2}_{p} \, \epsilon_{\infty} \, \Re\left(\epsilon_{0}(z)\right) \, \left\vert \epsilon_{0}(z) \, \left(1 - \lambda^{(3)}_{n} \right) + \lambda^{(3)}_{n} \, \epsilon_{\infty}  \right\vert^{2}}{ \left\vert \epsilon_{0}(z) \, \left(1 - \lambda^{(3)}_{n} \right) + \lambda^{(3)}_{n} \, \epsilon_{\infty}  \right\vert^{4}} \\
&+& \frac{\omega^{2}_{p} \, \epsilon_{\infty} \, 2 \, \lambda^{(3)}_{n} \,\, \left( 1 - \lambda^{(3)}_{n} \right) \, \epsilon_{\infty} \, \left( \Im\left(\epsilon_{0}(z) \right) \right)^{2}  }{ \left\vert \epsilon_{0}(z) \, \left(1 - \lambda^{(3)}_{n} \right) + \lambda^{(3)}_{n} \, \epsilon_{\infty}  \right\vert^{4}} >0.
\end{eqnarray*}
Hence, 
\begin{equation*}
\partial_{\lambda^{(3)}_{n}} \left( \omega^{2} \right) \left( \lambda^{(3)}_{n} \right) = 2 \,\, \partial_{\lambda^{(3)}_{n}} \left( \omega \right) \left( \lambda^{(3)}_{n} \right) \,\, \omega  \left( \lambda^{(3)}_{n} \right) >0.
\end{equation*}
As we know that $\omega\left( \cdot \right) > 0$, we deduce in straightforward manner that $ \partial_{\cdot} \left( \omega \right) \left( \cdot \right) > 0$ and then $\omega\left( \lambda^{(3)}_{n} \right)$ is strictly increasing function. This implies, $\lambda^{(3)}_{n} < \lambda^{(3)}_{m} \Rightarrow \omega_{n} < \omega_{m}$. 
\bigskip

(3).  Now, if we choose $\omega = \omega_{n_{0}} \pm a^{h}$ and $\gamma = \gamma_{n_{0}} \pm a^{h}$ we obtain from $(\ref{Fibladi})$ the following relation:
\begin{eqnarray*}
f_{n_{0}}(\omega_{n_{0}} \pm a^{h},\gamma_{n_{0}} \pm a^{h}) &=& \epsilon_{0}(z) \left(1 - \lambda^{(3)}_{n_{0}} \right) + \lambda^{(3)}_{n_{0}} \, \epsilon_{\infty} \left[1 + \frac{\omega^{2}_{p}}{\omega^{2}_{0} - \left( \omega_{n_{0}} \pm a^{h} \right)^{2} + i \left( \gamma_{n_{0}} \pm a^{h} \right) \, \left( \omega_{n_{0}} \pm a^{h} \right) } \right] \\
&=& - \frac{\lambda^{(3)}_{n_{0}} \, \epsilon_{\infty} \omega^{2}_{p}}{\omega^{2}_{0} - \omega^{2}_{n_{0}} + i \gamma_{n_{0}} \,  \omega_{n_{0}}  }  + \frac{\lambda^{(3)}_{n_{0}} \, \epsilon_{\infty} \omega^{2}_{p}}{\omega^{2}_{0} - \left( \omega_{n_{0}} \pm a^{h} \right)^{2} + i \left( \gamma_{n_{0}} \pm a^{h} \right) \, \left( \omega_{n_{0}} \pm a^{h} \right) } \\
&=& \mp \frac{\lambda^{(3)}_{n_{0}} \, \epsilon_{\infty} \omega^{2}_{p} \, a^{h} \, \left(  \omega_{n_{0}} (i-2) +  \gamma_{n_{0}}  \right)}{\left[\omega^{2}_{0} - \omega^{2}_{n_{0}} + i \gamma_{n_{0}} \,  \omega_{n_{0}}  \right]^{2}} + \mathcal{O}\left(a^{2h} \right),
\end{eqnarray*}
which after taking the absolute value we obtain $f_{n_{0}}(\omega_{n_{0}} \pm a^{h}, \gamma_{n_{0}} \pm a^{h}) = \mathcal{O}\left( a^{h} \right)$, or equivalently, 
\begin{equation*}
\left\vert\epsilon_{0}(z) - \lambda^{(3)}_{n_{0}} \left(\epsilon_{0}(z) - \epsilon_{p} \right) \right\vert  \sim  a^{h}. 
\end{equation*}
This proves the first part of $(\ref{eigval==1})$. To finish with the estimation of $(\ref{eigval==1})$ we need to prove, for $n \neq n_{0}$, that  
\begin{equation*}
\left\vert\epsilon_{0}(z) - \lambda^{(3)}_{n} \left(\epsilon_{0}(z) - \epsilon_{p} \right) \right\vert  \sim  1. 
\end{equation*}
For this computing 
\begin{equation*}
f_{n}(\omega_{n_{0}} \pm a^{h},\gamma_{n_{0}} \pm a^{h}) := \epsilon_{0}(z) \left(1 - \lambda^{(3)}_{n} \right) + \lambda^{(3)}_{n} \, \epsilon_{\infty} \left[1 + \frac{\omega^{2}_{p}}{\omega^{2}_{0} - \left( \omega_{n_{0}} \pm a^{h} \right)^{2} + i \left( \gamma_{n_{0}} \pm a^{h} \right) \, \left( \omega_{n_{0}} \pm a^{h} \right) } \right],
\end{equation*}
which, by the use of $(\ref{Fibladi})$, becomes,
\begin{eqnarray*}
f_{n}(\omega_{n_{0}} \pm a^{h},\gamma_{n_{0}} \pm a^{h}) &=&   \frac{-\lambda^{(3)}_{n_{0}} \, \epsilon_{\infty} \, \omega^{2}_{p} \,}{\omega^{2}_{0} - \omega^{2}_{n_{0}} + i \gamma_{n_{0}} \,  \omega_{n_{0}}  }  + \frac{\lambda^{(3)}_{n} \, \epsilon_{\infty} \, \omega^{2}_{p} \,}{\omega^{2}_{0} - \left( \omega_{n_{0}} \pm a^{h} \right)^{2} + i \left( \gamma_{n_{0}} \pm a^{h} \right) \, \left( \omega_{n_{0}} \pm a^{h} \right)}  \\ &+& \left(\epsilon_{0}(z) - \epsilon_{\infty} \right) \,  \left(\lambda^{(3)}_{n_{0}} - \lambda^{(3)}_{n} \right) \\
&=& \left[ \frac{\epsilon_{\infty} \, \omega^{2}_{p} }{\omega^{2}_{0} - \omega^{2}_{n_{0}} + i \gamma_{n_{0}} \,  \omega_{n_{0}}  } - \epsilon_{0}(z) + \epsilon_{\infty}  \,  \right] \,  \left(\lambda^{(3)}_{n} - \lambda^{(3)}_{n_{0}} \right) + \mathcal{O}\left(a^{h} \right).
\end{eqnarray*}
As, 
\begin{equation*}
\left[ \frac{\epsilon_{\infty} \, \omega^{2}_{p} }{\omega^{2}_{0} - \omega^{2}_{n_{0}} + i \gamma_{n_{0}} \,  \omega_{n_{0}}  } - \epsilon_{0}(z) + \epsilon_{\infty}  \,  \right] \sim 1,
\end{equation*}
we deduce that 
\begin{equation}\label{BLADZ}
f_{n}(\omega_{n_{0}} \pm a^{h}, \gamma_{n_{0}} \pm a^{h}) = \mathcal{O}  \left(\left\vert \lambda^{(3)}_{n} - \lambda^{(3)}_{n_{0}} \right\vert \right) + \mathcal{O}\left(a^{h} \right).
\end{equation}
Here, to get more precisions on the estimation of the last formula, we recall from \textbf{Remark} \ref{LimSeq} that $1/2$ is the only accumulation point for the sequence $\{ \lambda^{(3)}_{n} \}_{n \in \mathbb{N}}$. This suggest to take $\lambda^{(3)}_{n_{0}} \, \not\in ]\frac{1}{2} - a^{\frac{h}{2}}; \frac{1}{2} + a^{\frac{h}{2}} [$ in order to get $\left\vert \lambda^{(3)}_{n} - \lambda^{(3)}_{n_{0}} \right\vert \sim 1$ and then $(\ref{BLADZ})$ will be reduced to:  
\begin{equation*}
f_{n}(\omega_{n_{0}} \pm a^{h}, \gamma_{n_{0}} \pm a^{h}) =  \mathcal{O}\left(1 \right).
\end{equation*}
This ends the proof of \textbf{Lemma}\, \ref{LemmaClaim}.  
\end{proof}
It is worth emphasizing that the previous derivation of the frequencies $\omega$ can be made also by using the Drude model for the permittivity, see for instance \cite{engheta2006metamaterials} formula 1.5, instead of the Lorentz model.
\smallskip

\begin{remark}\label{Other-options}
In the previous computations, as we have seen, it's mandatory to vary both the frequencies $\omega$ and the damping coefficient $\gamma$. This is expensive from the point of view of actual applications as we have to change the nano-particle to change the damping frequency. Here, we describe two ways to overcome  this issue.
\begin{enumerate}
\item In the case where $\Im\left(\epsilon_{0}(z) \right)$ is small (or mathematically $\Im\left(\epsilon_{0}(z) \right) = 0$), we can take the undamped frequency $\gamma$ fixed but small. In this case, we deduce from $(\ref{MDSRE})$, the corresponding frequencies as follows:  
\begin{equation*}
\omega_{n_{0}} = \left[ \omega^{2}_{0} + \frac{\lambda_{n_{0}}^{(3)} \epsilon_{\infty} \omega^{2}_{p}}{\left(\lambda_{n_{0}}^{(3)}  \,  \epsilon_{\infty}  + \left( 1 - \lambda_{n_{0}}^{(3)} \right) \, \Re\left(\epsilon_{0}(z) \right) \right)}\right]^{\frac{1}{2}}. 
\end{equation*}
Recall that the usual photo-acoustic experiment applies to targets that are electrically conducting, i.e. with the imaginary part of the 'permittivity' highly pronounce. However, the photo-acoustic experiment using contrast agents, as described in this work, can apply to tissues which are electrically low conducting. This situation is known for early stage anomalies (as the benign cancer).
\bigskip
 
\item We allow the frequencies $\omega$ to be in the complex plan and in this case we get: 
\begin{equation*}
\omega_{n_{0}} = \frac{i \gamma \pm \sqrt{\Delta^{\star}}}{2},
\end{equation*} 
where\footnote{For $z \in \mathbb{C}$, given by $z= r \, e^{i \, \phi}$ with $- \pi < \phi \leq \pi$, the principal square root of $z$ is defined to be: $\sqrt{z}= \sqrt{r} \, e^{i \, \frac{\phi}{2}}$.} 
\begin{equation*}
\Delta^{\star} = - \gamma^{2} + 4 \left(\omega^{2}_{0} + \frac{\lambda^{(3)}_{n}\, \epsilon_{\infty} \, \omega^{2}_{p}}{\left[ \epsilon_{0}(z) \left(1 - \lambda^{(3)}_{n} \right) + \epsilon_{\infty} \, \lambda^{(3)}_{n} \right]} \right).
\end{equation*}
In this case, the damping frequency $\gamma$ can be taken fixed and even small.
\end{enumerate}
\end{remark}

\end{document}